\newtheoremstyle{example}{\topsep}{\topsep}%
     {}
     {}
     {\rmfamily}
     {}
     {\newline}
     {\thmname{#1}\thmnumber{ #2}\thmnote{ #3}}
   \theoremstyle{example}
\def\min{\text{min}}
\newcommand{\indic}{\mathbb{I}}
\numberwithin{equation}{section}
\theoremstyle{plain}
\newtheorem{thm}{Theorem}[section]
\newtheorem{prop}{Proposition}[section]
\newtheorem{lem}{Lemma}[section]
\newtheorem{rem}{Remark}[section]
\newtheorem{cor}{Corollary}[section]
\newcommand{\Lower}[2]{\smash{\lower #1 \hbox{#2}}}
\newcommand{\ben}{\begin{enumerate}}
\newcommand{\een}{\end{enumerate}}
\newcommand{\bi}{\begin{itemize}}
\newcommand{\ei}{\end{itemize}}
\begin{document}

\begin{frontmatter}
\title{Posterior distributions for Hierarchical Spike and Slab Indian Buffet processes} \runtitle{HIBP}

\begin{aug}
\author{\fnms{Lancelot F.} \snm{James}\thanksref{t1}\ead[label=e1]{lancelot@ust.hk;abhinav.pandey@ust.connect.hk}},
\author{\fnms{Juho} \snm{Lee}\thanksref{t2}\ead[label=e2]{juholee@kaist.ac.kr}},
\and
\author{\fnms{Abhinav} \snm{Pandey}\thanksref{t1}\ead[label=e3]{abhinav.pandey@ust.connect.hk}}
\affiliation{Hong Kong University of Science and Technology\\Korea Advanced Institute of Science and Technology}
\thankstext{t1}{Supported in part by the
grants RGC-GRF 16300217 and T31-604/18-N of the HKSAR.}
\thankstext{t2}{
Supported by Institute of Information \& communications Technology Planning \& Evaluation (IITP) grant funded by the Korea government (MSIT) (No.2019-0-00075, Artificial Intelligence Graduate School Program (KAIST).)
}

\runauthor{James, Lee, and Pandey}

\address{Lancelt F. James and Abhinav Pandey\\
The Hong Kong University of Science and
Technology, \\Department of Information Systems, Business Statistics\\ and Operations Management,\\
Clear Water Bay, Kowloon, Hong Kong.\\ 
\printead{e1}\\}

\address{Juho Lee\\
The graduate school of AI,\\KAIST\\
Room 2223, N5 bldg\\
Yooseong-gu, Daejeon\\ 
Republic of Korea\\
\printead{e2}\\}

\end{aug}

\begin{abstract}Bayesian nonparametric hierarchical priors are highly effective in providing flexible models for latent data structures exhibiting sharing of information between and across groups. 
Most prominent is the Hierarchical Dirichlet Process (HDP), and its subsequent variants, which model latent clustering between and across groups. The HDP, may be viewed as a more flexible extension of Latent Dirichlet Allocation models~(LDA), and has been applied to, for example, topic modelling, natural language processing, and datasets arising in health-care. We focus on analogous latent feature allocation models, where the data structures correspond to multisets or unbounded sparse matrices. The fundamental development in this regard is the Hierarchical Indian Buffet process (HIBP), which utilizes a hierarchy of Beta processes over $J$ groups, where each group generates binary random matrices, reflecting within group sharing of features, according to beta-Bernoulli IBP priors. To encompass HIBP versions of non-Bernoulli extensions of the IBP, we introduce hierarchical versions of general spike and slab IBP. We provide explicit novel descriptions of the marginal, posterior and predictive distributions of the HIBP and its generalizations which allow for exact sampling and simpler practical implementation. We highlight common structural properties of these processes and establish relationships to existing IBP type and related models arising in the literature. Examples of potential applications may involve topic models, Poisson factorization models, random count matrix priors and neural network models
\end{abstract}

\begin{keyword}[class=AMS]
\kwd[Primary ]{60C05, 60G09} \kwd[; secondary ]{60G57,60E99}
\end{keyword}
\begin{keyword}
\kwd{Bayesian nonparametrics, Bayesian Statistical Machine Learning, Hierarchical Indian Buffet Process, Latent Feature models, Poisson Process Calculus, Spike and Slab priors}
\end{keyword}

\end{frontmatter}
\section{Introduction}
The Indian Buffet process~(IBP) introduced by Griffiths and Ghahramani~\cite{GriffithsZ,Griffiths1}, represents an important amalgam of ideas from Bayesian non-parametric statistics~(BNP) and statistical machine learning, as it relates to the creation and  analysis of more flexible latent feature/factor models. In particular, this framework allows for sharing and learning of, for example, latent features and associated counts, which avoid issues of model selection arising in parametric latent factor models.
As described in~Thibaux and Jordan~\cite{Thibaux}, the IBP may be expressed in terms of $Z_{1},\ldots,Z_{M}|\mu$ conditionally iid Bernoulli processes, where $\mu$ is a Beta process~\cite{hjort}, which is otherwise a special case of a completely random measure (CRM) with countable jumps in $[0,1]$ and atoms(corresponding to features) drawn from a finite measure $B_{0}.$ The marginal distribution of $Z_{1},\ldots,Z_{M}$ produces sparse random binary matrices with $M$ rows and a potentially infinite set of columns. A sequential scheme to generate this, referred to as the Indian Buffet Process, is facilitated by an explicit description of the predictive distribution of $Z_{M+1}|Z_{1},\ldots,Z_{M},$ which in turn may be constructed from the posterior distribution of $\mu|Z_{1},\ldots,Z_{M}.$ A thorough understanding of these components then allows for their practical utility in a wide range of factor models where observed data structures, say $\mathbf{W},$ conditional on $(Z_{1},\ldots,Z_{M})=\mathbf{z},$ may have the form $\Phi(\mathbf{W}|\mathbf{z}).$ 
The IBP has inspired the development and investigation of generalizations which induce non-binary entries, with the bulk of these corresponding to Poisson and Negative Binomial count models. Here we list some applications of the IBP and these extensions~\cite{Broderick2,Caron2012,Croy,KnowlesThesis,KnowlesAAS, MasoeroBiometrika,Scheinthesis,Titsias,
ZhouCarin2015,Zhou1,ZhouPadilla2016}. 
These are examples of \textit{trait allocation models} as defined in~\cite{CampbellTrait}. In particular, they provide the following description ``Trait allocations apply when data may
belong to more than one group (a trait), and may exhibit non-negative integer levels of belonging in each group."
See~\cite{BroderickWilson} for relevant analysis related to such models. \cite{James2017} provides a unified framework for the posterior analysis of IBP type models, which produce sparse random matrices with arbitrary valued entries, by introducing what we call here,  \textit{spike and slab} IBP priors, based on random variables having mass at zero, but otherwise producing non-zero entries corresponding to any random variable. Furthermore, important to the present discussion~\cite[Section 5]{James2017} also introduces and provides posterior analysis for multivariate IBP priors. A recent work of \cite{AyedCaron} shows that their model for overlapping community detection may be expressed as a case of~\cite[Section 5]{James2017}. Furthermore, it is interesting to note that \cite[Section 6]{AyedCaron} describe the potential usage of conditionally Gamma and Normally distributed entries. In addition see~\cite{Basbug, Ragadeep,Scheinthesis,ZhouPGBN} for potential ideas in regards to matrices with general spike   and slab entries. 

Our focus in this work is to provide explicit novel descriptions of the marginal, posterior and predictive distributions of hierarchical versions of general spike and slab IBP, which allow for exact sampling and simpler practical implementation of these processes. This includes descriptions for the fundamental development in this regard, which is the Hierarchical Indian Buffet Process (HIBP)~\citep{Thibaux}. As an example, one can use our Corollary~\ref{bergenpredict} to describe explicitly the HIBP analogue of the IBP sequential scheme, which is otherwise unavailable in the literature. The HIBP utilizes a hierarchy of Beta processes over $J$ groups to induce sharing across group specific Bernoulli IBP, each consisting of $M_{j}$ processes for $j=1,\ldots,J.$ Specifically this is done
by setting $\mu_{1},\ldots,\mu_{J}|B_{0}$ to be conditionally independent Beta processes with common base measure $B_{0}$ and then specifying $B_{0}$ to also be a random Beta process. The authors discuss applications to topic models, but there are many possibilities, for example applications to neural nets and imaging as described in~\cite{BoChen,GuptaRestrict,Kessler, ZhoudHIBP}. Here we use general (conditonal) CRM specifications for $\mu_{j}$ and $B_{0},$ and otherwise apply these to the constructions in~\cite[Section 2]{James2017}. It is important for us to point out that while our results are novel in every case, the paper of~\cite{Masoerotrait} extends the HIBP framework to trait allocation models which corresponds to the important case of generalized HIBP with non-negative integer valued entries.  \cite{Masoerotrait} was done prior to this work and consists of two results~\cite[Theorems 1,2]{Masoerotrait}, which most closely parallels our forthcoming results in Propositions~\ref{marginalslabHIBP}, \ref{postBgivenZq}, and \ref{propmupostrep},
however the descriptions are quite different. 

The HIBP and its generalizations to spike and slab models, draws their conception from, and  are otherwise the analogue of BNP hierarchical priors for latent clustering models. The seminal work is the Hierarchical Dirichlet process(HDP)~\cite{HDP} which may be viewed as  a flexible extension of Latent Dirichlet allocation models~\cite{BleiLDA}, with for example, applications to topic models. We now briefly describe some elements of that construction. As the name suggests, the HDP specifies discrete random probability measures $P_{1},\ldots,P_{J}|P_{0}$ to be conditionally independent Dirichlet processes~\cite{Ferg1973}, with common base measure $P_{0}.$ (Hereafter we shall use the notation $[n]=\{1,\ldots,n\}$ and write $i\in [n].$)
Let us for a moment assume that $P_{0}$ is non-atomic, then in the presence of latent data $Y_{j,1},\ldots, Y_{j,M_{j}}|P_{j},P_{0}\overset{iid}\sim P_{j},$ the marginal process produces $K_{j,M_{j}}=k_{j}\leq M_{j}$ unique values, say $(Y^{*}_{j,l},l\in[k_{j}])\overset{iid}\sim P_{0}$ and creates clusters $C_{j,l}=\{i:Y_{j,i}=Y^{*}_{j,l}\}$ forming a random partition of $[M_{j}],$ corresponding to a Chinese Restaurant process~\cite{Pit96,Pit06}. 
A common and important application is to mixture models, see~\cite{Lo1984},  where observables $W_{j,1},\ldots,W_{j,M_{j}}|(Y_{j,i},i\in [M_{j}])$ are conditionally independent with respective densities, or pmf, $k(W_{j,i}|Y_{j,i})$, for $i\in [M_{j}].$ \cite{IJ2001,IJ2003} show that these latent clustering mechanisms, exhibiting sharing within a group, and related stick-breaking constructions, carry over to quite general choices for $P_{j}.$ In any event, if $P_{0}$ is non-atomic, there is no sharing of information (atoms) across groups. Sharing across groups is achieved in~\cite{HDP} by specifying $P_{0}$ to also be a Dirichlet process.  Subsequent work, see \cite{Goldwater,TehPY}, extended this framework to Pitman-Yor processes(which are described in~\cite{IJ2001,Pit96}). More recently, posterior analysis of a more general class of hierarchical  $P_{1},\ldots,P_{J}$ has beeen done in~\cite{CamerLP1}. See, additionally, \cite{Argiento,BuntineHPYtwitter,Wood} for some examples of applications. Also recently, the general L\'evy moving average/smooth hazard rate framework in~\cite[Section 4]{James2005}, which generalizes the gamma process formulation in~\cite{DykstraLaud,LoWeng89,IJ2004},  has been extended to a hierarchical setting, suitable for data sets corresponding to multiplicative intensity models, in~\cite{CamerLP2}, by utilizing kernel smoothed hierarchical CRM, of the form $\int_{0}^{\infty}k(t|s)\mu_{j}(ds),$ as dependent random hazard rates. Where 
$\mu_{1},\ldots,\mu_{J}|B_{0}$ and $B_{0}$ have the same construction as in our models.  See also~\cite{Broderick1,CampbellTrait,RagaBlei} for relevant general discussions.
\subsection{Outline}
We now present some key highlights of this work. We first note that much of our analysis is facilitated by the use of  the results on multivariate type IBP in~\cite[Section 5]{James2017}, coupled with key representations that we derive.
Section 2.1 describes a general version of the Bernoulli based HIBP, where we note that $B_{0}$ based on a generalized gamma process may  be used. In fact, as described in Sections 2.4, 2.6 and Section 3,  one of our key findings, highlighted in Proposition~\ref{PropBernoulliPoissonsharing}  and Proposition~\ref{multisample}, is that there is a common multivariate IBP Poisson structure, with similarities to~\cite{Caron2012, Lo1982, Titsias}, that essentially dictates the generation and sharing of features across groups no matter the spike and slab distribution used. Hence this choice of  prior for $B_{0}$ is easily applied in every spike and slab case. As such, we shall highlight calculations with respect to this choice of prior for $B_{0}$ throughout. Section 2.2. presents a general description of spike and slab HIBP priors based on the framework in~\cite[Section 2]{James2017}. Section 2.3-2.5 provide various representations that allow one to make connections with existing IBP type  models. In particular, besides the developments mentioned in Section 2.4, Section 2.5.1. describes relationships to models discussed in \cite{ZhouCarin2015,Zhou1, ZhouPadilla2016}. Section 2.6, Proposition~\ref{bigmargBgen} and Corollary~ \ref{corjointB}\label{bigmargBgen}, shows that our hierarchical priors may be expressed as multivariate IBP based on entries corresponding to general compound Poisson processes. 
It is noteworthy that this structure suggests natural computational routines which may be seen as generalizations of methods employed in~\cite{ZhouCarin2015,Zhou1, ZhouPadilla2016} based on the Logarithmic series distribution compound Poisson representation of negative binomial distributions in~\cite{Quenouille}. Furthermore, the general compound Poisson process factorization framework in~\cite{Basbug}, should offer indications of usage for our priors in various models.  Based on the representations in Section 2.6, Section 3 provides explicit results that are amenable to sampling in parallel to what is known for standard spike and slab IBP, we also show connections to ECPF as described in~\cite{ZhouFoF}. We mention here Proposition~\ref{multisample}, Theorem~\ref{thmHIBPmarginalgen}, and explicit prediction rules derived in Section 3.5, which are all directly pertinent to sampling. In particular, similar to what is mentioned and done in~\cite{ZhouPadilla2016}, our Theorem~\ref{thmHIBPmarginalgen} allows one to generate quite intricate sparse random matrices with arbitrary entries at once, whilst our results in Proposition~\ref{postgenpredict} for the prediction distributions allows for the adding of additional rows and columns sequentially. Sections 3.6 and 3.7 provide explicit details in the case of Poisson and Bernoulli based HIBP. However, we may easily mix various choices of spike and slab distributions. In Section 4, we provide some simple simulations.

\subsection{Indian buffet process}
We first discuss more details of  the basic Indian Buffet process as devised in~\cite{GriffithsZ,Griffiths1}. The Indian Buffet process sequential scheme is cast in terms of $M$ customers sequentially selecting dishes from an Indian Bufftet, where each dish represents features or attributes of that customer, the mechanism allows for the selection of dishes previously chosen by other customers as well as a mechanism to choose new dishes is described as follows for the simplest case. For a parameter $\theta>0,$ the first customers selects a $\mathrm{Poisson}(\theta)$ number of dishes drawn from a non-atomic finite measure  $B_{0},$ given the selection process of the first $M$ customers the ($M+1$) st customer will choose new dishes according to a $\mathrm{Poisson}(\theta/(M+1))$ variable and chooses each previously chosen dish according to Bernoulli random variables, where the most popular dishes have the highest chance of being chosen. 
As shown in \cite{Thibaux}, this process may be generated by expressing the features of each customer in terms of conditionally independent Bernoulli processes say 
$Z^{(1)},\ldots,Z^{(M)}$  defined for each $i$ as,
$$
Z^{(i)}=\sum_{k=1}^{\infty}b^{(i)}_{k}\delta_{\omega_{k}}
$$
where $(b^{(i)}_{k})$ for $i=1,\ldots,M$ are conditionally independent such that for each $(i,k)$ $b^{(i)}_{k}\sim\mathrm {Bernoulli}(p_{k}),$ where the $(p_{k},k=1,\ldots,):=(p_{k})$ are points of a Poisson random measure with mean measure a L\'evy measure of the form $\rho(s)=\theta s^{-1}\mathbb{I}_{\{0\leq p\leq 1\}}.$ The $(b^{(i)}_{k})$ form priors over equivalence classes of binary (random) matrices and are associated with a potentially infinite collection of features $(\omega_{k})$ that are drawn from a non-atomic finite measure $B_{0}(\cdot)$ over a Polish space $\Omega.$ In particular, since $B_{0}({\Omega})$ is finite, the
 $(\omega_{k})$ are iid $\bar{B}_{0}(dx)=B_{0}(dx)/B_{0}({\Omega}).$
The assignment of the $(p_{k})$ is expressed through a completely random measure $\mu$ with jumps in $[0,1],$ expressed as 
$\sum_{k=1}^{\infty}p_{k}\delta_{\omega_{k}}.$ For the current choice of $\rho(p)=\theta p^{-1}\mathbb{I}_{\{0\leq p\leq 1\}},$ $\mu$ is a simple homogeneous case of Beta processes introduced in~\cite{hjort}, see also~\cite{KimY}, with law denoted as 
$\mu\sim \mathrm{BP}(\theta,B_{0}),$ corresponding $Z^{(1)},\ldots,Z^{(M)}|\mu$ are iid Bernoulli processes with common law denoted as $\mathrm{BerP}(\mu)$, The IBP sequential scheme is obtained from the conditional distribution of $Z^{(M+1)}|Z^{(1)},\ldots,Z^{(M)}$ which is facilitated by a description of the posterior distribution of $\mu|Z^{(1)},\ldots,Z^{(M)}.$ The IBP has been extended to more general choices for $\mu,$ including the richer class of stable-Beta, see~\cite{TehG}, see also~\cite{KimLee}, with jumps $(p_{k})$ specified by a L\'evy density, 
\begin{equation}
\rho_{\alpha,\beta}(p)=\theta p^{-\alpha-1}(1-p)^{\beta
+\alpha-1}\indic_{\{0<p<1\}},
\label{stablebetauni}
\end{equation}
for $\beta>-\alpha$ and $0\leq \alpha<1.$ Here we will say $\mu$ is a stable-Beta process with parameters $(\alpha,\beta;\theta),$ with distributional notation $\mu\sim \mathrm{sBP}((\alpha,\beta),\theta,B_{0})$
One may use any $\mathrm{CRM}$ $\mu$ provided its jumps are in $[0,1].$ In general, 
we use the notation $\mu\sim \mathrm{CRM}(\rho, B)$ to denote a general $\mathrm{CRM},$ with jumps in $(0,\infty)$ over $\Omega$ where the L\'evy density $\rho(s)$ satisfies 
$\int_{0}^{\infty}\min(s,1)\rho(s)ds<\infty,$ and $B(d\omega)$ is a finite measure over $\Omega.$ 

\begin{rem} As in~\cite{James2002,James2005,JamesNTR,JLP2,James2017}, we exploit the representations $\mu(d\omega)=\int_{0}^{\infty}sN(ds,d\omega),$ where $N$ is a Poisson random measure with mean measure $\mathbb{E}(N(ds,dw)]=\nu(ds,d\omega)=\rho(s)B(d\omega),$ we write $N\sim \mathrm{PRM}(\nu).$ 
\end{rem} 
\section{Bernoulli process HIBP and extensions to multiple Hierarchical spike and slab IBP }
\subsection{The  Beta process HIBP and extensions in the Bernoulli case}\label{Bernoulliclassic}
We first sketch out the HIBP as described in the setting of  topic models in~\cite{Thibaux}. Consider a corpora consisting of $j=1,\ldots,J$ types of documents, where there are $M_{j}$ documents of each type $j\in [J]$. The documents are modelled as follows;
for $j\in [J],$ model documents of each type $j$ as conditionally independent Bernoulli processes as follows,
\begin{equation}
Z^{(1)}_{j},\ldots, Z^{(M_{j})}_{j}|\mu_{j}\overset{iid}\sim \mathrm{BeP}(\mu_{j})
\label{Bernoullilevel1}
\end{equation}
$\mu_{1},\ldots,\mu_{J}$ represent priors over the selection of words of each type and are modelled as conditionally independent Beta processes given a common measure $B_{0}$ over the feature space. More precisely $\mu_{1},\ldots,\mu_{J}|B_{0}$ are conditionally independent such that for $j\in [J],$
\begin{equation}
\mu_{j}|B_{0}\sim \mathrm{BP}(\theta_{j},B_{0}).
\label{givenB1}
\end{equation}

If $B_{0}$ is non-atomic then there are sharing of words within each document type $j$, according to usual Bernoulli IBP mechanisms but not sharing of information across document types. Similar to the HDP, in order to allow for sharing across document types \cite{Thibaux} specify a Beta process prior for $B_{0},$ that is
\begin{equation}
B_{0}\sim \mathrm{BP}(\theta_{0},G_{0}). 
\label{BetaB}
\end{equation}

While the choice of $B_{0}$ as  Beta process might seem a natural analogue of the use of layers of Dirichlet processes  in the HDP, we note that since $B_{0}$ is a finite measure it need not have jumps restricted to $[0,1],$ hence as a slight, but important, extension to the HIBP framework we can choose $B_{0}=\sum_{k=1}^{\infty}\tau_{k}\delta_{Y_{k}}$ to be a general $\mathrm{CRM}(\tau_{0},G_{0})$ where $\tau_{0}(\lambda)$ is now a general  L\'evy density on $(0,\infty),$ with Laplace exponent, for $\kappa>0,$
\begin{equation}
\tilde{\psi}_{0}(\kappa)=\int_{0}^{\infty}(1-{\mbox e}^{-\lambda\kappa})\tau_{0}(\lambda)d\lambda.
\label{BLaplaceexponent}
\end{equation}
The choice of $B_{0}$ as a generalized gamma process will be a featured example in this work due to its relevant flexible distributional properties and its  tractability, see for instance~\cite{Caron2012,CaronFox, James2002,James2005,JamesNTR,James2017,ZhouFoF}. Specifically we say, that $B_{0} ,$ is a generalized gamma process with law denoted as $
\mathrm{GG}(\alpha,\zeta;\theta_{0}G_{0})$ if 
\begin{equation}
\tau_{0}(\lambda):=\tilde{\tau}_{\alpha}(\lambda|\zeta,\theta_{0})=\frac{\theta_{0}}{\Gamma(1-\alpha)}\lambda^{-\alpha-1}{\mbox e}^{-\zeta \lambda}{\mbox { for }}0<\lambda<\infty,
\label{GGLevy}
\end{equation}
for $\theta_{0}>0,$ and the ranges $0<\alpha<1,$ $\zeta\ge 0,$ or $\alpha\leq 0$ 
and $\zeta>0.$ When $\alpha=0$ this is the case of the gamma process. That is to say $\mathrm{GG}(0,\zeta;\theta_{0})$ corresponds to a Gamma process with shape $\theta_{0}$ and scale $1/\zeta.$ When $\alpha<0,$ this results in a class of gamma compound Poisson processes. 
In particular, as a special case of~(\ref{BLaplaceexponent}), $\tilde{\psi}_{0}(\kappa)=\theta_{0}\tilde{\psi}_{\alpha,\zeta}(\kappa)$ where
\begin{equation}
\label{rfunction}
\tilde{\psi}_{\alpha,\zeta}(\kappa)=\left\{\begin{array}{ll}
 \frac{1}{\alpha}[{(\kappa+\zeta)}^{\alpha}-{\zeta}^{\alpha}], &   {\mbox{ if }} 0<\alpha<1, \zeta\ge 0\\ 
{\log(1+\kappa/\zeta)},& {\mbox{ if }} \alpha=0, \zeta>0\\
   \frac{1}{\delta}[{\zeta}^{-\delta}-{(\kappa+\zeta)}^{-\delta}], &   {\mbox{ if }} \alpha=-\delta<0,\zeta>0\\
                     \end{array}\right.
\end{equation}
In addition, as further natural extensions of the classic HIBP, we may replace the Beta process specifications of 
$\mu_{1},\ldots,\mu_{J}|B_{0}$ in~(\ref{givenB1}) with the case where the $\mu_{j}|B_{0}$ are conditionally independent $\mathrm{CRM}(\rho_{j},B_{0})$ with the constraint that the L\'evy density $\rho_{j}$ dictate jumps of $\mu_{j}$ restricted to $[0,1],$ for $j\in [J].$ The work of~\cite{TehG}, and subsequent applications in the IBP case, sugggest that the choice of $\mu_{j}|B_{0}\overset{ind}\sim\mathrm{sBP}((\alpha_{j},\beta_{j}),\theta_{j},B_{0}),$ $j\in[J],$ would be ideal choices both in terms of tractability, and model flexibility in that it allows fitting $((\alpha_{j},\beta_{j}),j\in[J])$ to various types of data structures which cannot be captured by the standard Beta process. 

\subsection{Mixed hierarchical generalized spike and slab Indian Buffet processes}
We now show how to construct and combine Hierarchical versions of processes based on natural extensions of the $\mathrm{IBP}$ that have appeared in the literature.  We use the unified framework following~\cite[Secton 2.1]{James2017}, for each $j\in[J],$ let $A_{j}$ denote a random variable such that given a parameter,$s_{j},$ the distribution of $A_{j}|s_{j}$ is given by the spike and slab probability measure,
\begin{equation}
G_{A_{j}}(da|s_{j})=[1-\pi_{A_{j}}(s_{j})]\delta_{0}(da)+\pi_{A_{j}}(s_{j})
\tilde{G}_{A'_{j}}(da|s_{j})
\label{spikeslabdist1}
\end{equation}
where $\tilde{G}_{A'_{j}}(da|s_{j})$ corresponds to a proper distribution~(slab distribution) of a random variable $A'_{j}$ which does not take mass at $0,$  and 
$\mathbb{P}(A_{j}=0|s_{j})=1-\pi_{A_{j}}(s_{j})>0$ is the spike. Equivalently, $A_{j}\overset{d}=b_{j}A'_{j},$  where $b_{j}\overset{d}=\mathbb{I}_{\{A_{j}\neq 0\}}|s_{j}\sim \mathrm{Bernoulli}(\pi_{{A}_{j}}(s_{j})),$ and it follows that (\ref{spikeslabdist1}) can be expressed as
\begin{equation}
G_{A_{j}}(da|s_{j})={[1-\pi_{A_{j}}(s_{j})]}^{1-b(a)}{[\pi_{A_{j}}(s_{j})
\tilde{G}_{A'_{j}}(da|s_{j})]}^{b(a)}
\label{spikeslabdist2}
\end{equation}
for $b(a):=\mathbb{I}_{\{a\neq 0\}}.$ Now, given $B_{0}=\sum_{k=1}^{\infty}\tau_{k}\delta_{Y_{k}},$ let, for each fixed $j\in[J],$  $((A^{(i)}_{j,k}, i\in [M_{j}]),s_{j,k},\omega_{j,k})),$ denote points of a Poisson random measure with mean intensity 
\begin{equation}
\left[\prod_{i=1}^{M_{j}}G_{A_{j}}(da^{(i)}_{j}|s_{j})\right]\rho_{j}(s_{j})B_{0}(d\omega),
\label{PoissonMeanjoint}
\end{equation}
where $\mu_{j}=\sum_{k=1}^{\infty}s_{j,k}\delta_{\omega_{j,k}}|B_{0}$ is $\mathrm{CRM}(\rho_{j},B_{0})$ where now $\rho_{j}$ is a 
L\'evy density on $(0,\infty),$  for $j\in[J].$  Now as in~\cite[Section 2 eq.(2.1), Section 3 eq(3.1)]{James2017}, define thinnings of $\mu_{j},$ by 
$\rho_{j,i-1}(s)=[1-\pi_{A_{j}}(s)]^{i-1}\rho_{j}(s)$ for $i=1,2\ldots,$ where $\pi_{A_{j}}(s)$  must satisfy
\begin{equation}
\gamma_{j,i}=\int_{0}^{\infty}\pi_{A_{j}}(s)\rho_{j,i-1}(s)ds<\infty
\label{gamij}
\end{equation}  
with, see \cite[Appendix, eqs.(A.2),(A.4)]{James2017},
\begin{equation}
\psi_{j}(M_{j})=\sum_{i=1}^{M_{j}}\gamma_{i,j}=\int_{0}^{\infty}(1-{[1-\pi_{A_{j}}(s)]}^{M_{j}})\rho_{j}(s)ds.
\label{genpsi}
\end{equation}

Then we can construct a mixed collection of hierarchical generalized spike and slab Indian Buffet processes as follows
\begin{equation}
((Z^{(i)}_{j},i\in [M_{j}]),j\in[J])=((\sum_{k=1}^{\infty}A^{(i)}_{j,k}\delta_{\omega_{j,k}},i\in [M_{j}]),j\in[J])
\label{mixedHIBP}
\end{equation}
Similar to \cite{James2017}, we say that  $((Z^{(i)}_{j},i\in [M_{j}]),j\in[J])|\mu_{1},\ldots,\mu_{J},B_{0}$ are conditionally independent across $(j\in[J])$ with 
$(Z^{(i)}_{j},i\in [M_{j}])|\mu_{j},B_{0}$ conditionally iid $\mathrm{IBP}(G_{A_{j}}|\mu_{j}),$ interchangeably we shall also say such processes are $\mathrm{IBP}(A_{j}|\mu_{j}).$ 

As prominent examples, set $A_{1}|p\sim \mathrm{Bernoulli}(p)$ with conditional pmf $p^{a}(1-p)^{1-a}$ for $a\in \{0,1\}$ and hence the slab is $A'_{1}=1$ and spike is $1-\pi_{A_{1}}(p)=(1-p).$ In this case $(Z^{(i)}_{1})$ are conditionally iid 
$\mathrm{BeP}(\mu_{1}).$ Set $A_{2}|s\sim \mathrm{Poisson}(rs)$ with pmf $(rs)^{a}{\mbox e}^{-rs}/a!,$ $a=0,1,2,\ldots,$ 
with spike $1-\pi_{A_{2}}(s)={\mbox e}^{-rs},$ and slab $A'_{2}|s$ corresponding to a zero truncated Poisson distribution, say $A'_{2}|s\sim \mathrm{tPoissson}(rs),$ with pmf
\begin{equation}
\mathbb{P}(A'_{2}=a|s)\frac{{(rs)}^{a}{\mbox e}^{-rs}}{a!(1-{\mbox e}^{-rs})}{\mbox { for }}a=1,2,\ldots 
\end{equation}
We can say that $(Z^{(i)}_{2})$ are conditionally iid 
$\mathrm{IBP}(\mathrm{Poisson}(r)|\mu_{2})$ or simply $\mathrm{PoiP}(r,\mu_{j}).$
In the case where $A_{3}|p$ is Negative-Binomial $(r,p),$  denoted as $\mathrm{NB}(r,p),$ with $
\mathbb{P}(A_{3}=a|p)={a+r-1\choose a}p^{a}{(1-p)}^{r}, a=0,1,\ldots.
$
It follows that $\pi_{A_{3}}(p)=1-(1-p)^{r},$ and $A'_{3}$ has a discrete slab distribution  $\tilde{G}_{A'_{3}},$ specified for $a=1,2,\ldots$
by
\begin{equation}
\mathbb{P}(A'_{3}=a|p)=\frac{{a+r-1\choose a}p^{a}{(1-p)}^{r}}
{1-(1-p)^{r}}.
\label{NBslab}
\end{equation}
In general $A'_{j}$ may have continuous distributions, such as Normal or Gamma variable. 
\begin{rem}We shall also view the processes in~(\ref{mixedHIBP}) as multivariate IBP in the sense of \cite[Section 5]{James2017}, for instance, $(Z^{(i)}_{j},i\in [M_{j}])|\mu_{j},B_{0}$ is conditionally a multivariate $\mathrm{IBP}((A^{(1)}_{j},\ldots,
A^{(M_{j})}_{j})|\mu_{j}).$ See \cite{AyedCaron,ZhouPadilla2016} for applications of related constructions for a single $j.$
\end{rem}
\begin{rem}We note that results for extensions to inhomogeneous $\rho_{j}(s|\omega)$ or hierarchies of the general multivariate IBP in~\cite[Section 5]{James2017}, will follow from the arguments similar to that used here coupled with the results in~\cite{James2017}. 
\end{rem}
\subsection{Hierarchical representations}\label{Hrep}
We next describe initial alternative representations. 
\begin{prop}\label{HIBPsumrep}For each fixed $(j,k),$ let
 $\mathbf{s}_{j,k}:=((s_{j,k,l})),$ such that given $B_{0},$ $\mathbf{s}_{j,k}$ are the points of a $\mathrm{PRM}$ with mean $\tau_{k}\rho_{j},$ and are conditionally independent across $j,k.$ Furthermore, define independent subordinators $(\sigma_{j}(t),t\ge 0)$ with $-\log \mathbb{E}[{\mbox e}^{-\sigma_{j}(t)}]=t\int_{0}^{1}(1-{\mbox e}^{-s})\rho_{j}(s)ds,$ and let for each $j,$ $((\sigma_{j,k}(t)))$ denote iid copies of $\sigma_{j}.$ Then there are the following representations
\begin{enumerate}
\item[(i)]
$\mu_{j}\overset{d}=\sum_{k=1}^{\infty}[\sum_{l=1}^{\infty}s_{j,k,l}]\delta_{Y_{k}}\overset{d}=\sum_{k=1}^{\infty}\sigma_{j,k}(\tau_{k})\delta_{Y_{k}},$
\item[(ii)]$Z^{(i)}_{j}\overset{d}=\sum_{k=1}^{\infty}\left[\sum_{l=1}^{\infty}A^{(i)}_{j,k,l}\right]\delta_{Y_{k}}$ where $A^{(i)}_{j,k,l}|s_{j,k,l}\overset{ind}\sim G_{A_{j}}(da|s_{j,k,l}).$
\end{enumerate}
\end{prop}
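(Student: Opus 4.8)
The plan is to read both representations directly off the Poisson random measure in~(\ref{PoissonMeanjoint}), working conditionally on $B_{0}=\sum_{k=1}^{\infty}\tau_{k}\delta_{Y_{k}}$ and exploiting that $B_{0}$ is purely atomic together with the mapping and marking theorems for Poisson processes. First I would fix $B_{0}$ and consider the $\mathrm{PRM}$ of marked points $((A^{(i)}_{j,k},i\in[M_{j}]),s_{j,k},\omega_{j,k})$ with mean intensity $[\prod_{i=1}^{M_{j}}G_{A_{j}}(da^{(i)}_{j}|s_{j})]\rho_{j}(s_{j})B_{0}(d\omega)$. Substituting $B_{0}(d\omega)=\sum_{k}\tau_{k}\delta_{Y_{k}}(d\omega)$ shows at once that every point has its $\omega$-coordinate at one of the atoms $Y_{k}$, so the atoms of $\mu_{j}$ and of each $Z^{(i)}_{j}$ all lie among the $Y_{k}$.

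For part (i) I would project onto the $(s,\omega)$-coordinates; by the mapping theorem this projection is a $\mathrm{PRM}$ with mean $\rho_{j}(s)B_{0}(d\omega)$. Decomposing this mean over the atoms $Y_{k}$, the jumps landing at a fixed $Y_{k}$ form a $\mathrm{PRM}$ on $(0,\infty)$ with mean $\tau_{k}\rho_{j}(s)\,ds$; these are exactly the points $\mathbf{s}_{j,k}=((s_{j,k,l}))$, and they are independent across $k$ and across $j$ because a $\mathrm{PRM}$ assigns independent values to disjoint regions. Recalling $\mu_{j}(d\omega)=\int_{0}^{\infty}s\,N(ds,d\omega)$ and collecting the jumps at each atom yields $\mu_{j}\overset{d}=\sum_{k}[\sum_{l}s_{j,k,l}]\delta_{Y_{k}}$, the first equality. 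For the second I would match Laplace transforms: by Campbell's exponential formula, conditionally on $B_{0}$,
\[
\mathbb{E}\big[{\mbox e}^{-\kappa\sum_{l}s_{j,k,l}}\,\big|\,B_{0}\big]=\exp\Big(-\tau_{k}\int_{0}^{\infty}(1-{\mbox e}^{-\kappa s})\rho_{j}(s)\,ds\Big),
\]
which is precisely the Laplace transform of $\sigma_{j,k}(\tau_{k})$ for the subordinator $\sigma_{j}$ defined in the statement. Independence across $k$ then matches all finite-dimensional distributions and gives $\mu_{j}\overset{d}=\sum_{k}\sigma_{j,k}(\tau_{k})\delta_{Y_{k}}$.

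For part (ii) I would apply the marking theorem in the reverse direction: conditionally on the located jumps $(s_{j,k,l},Y_{k})$, the attached marks $(A^{(i)}_{j,k,l},i\in[M_{j}])$ are independent with $A^{(i)}_{j,k,l}\mid s_{j,k,l}\overset{ind}\sim G_{A_{j}}(da\mid s_{j,k,l})$, exactly as dictated by the product factor in~(\ref{PoissonMeanjoint}). Since $Z^{(i)}_{j}$ is formed by summing the marks attached to the jumps sitting at each atom $Y_{k}$, re-indexing the points of~(\ref{PoissonMeanjoint}) by the atom $Y_{k}$ they occupy gives $Z^{(i)}_{j}\overset{d}=\sum_{k}[\sum_{l}A^{(i)}_{j,k,l}]\delta_{Y_{k}}$.

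The Laplace-transform matching and the bookkeeping of marks are routine; the step demanding the most care, and the main obstacle, is justifying the atomic decomposition rigorously. One must verify that grouping the points of the single $\mathrm{PRM}$ in~(\ref{PoissonMeanjoint}) according to the atom of $B_{0}$ they occupy yields genuinely independent sub-$\mathrm{PRM}$s $\mathbf{s}_{j,k}$ with the stated means $\tau_{k}\rho_{j}$, and that the resulting countable superpositions $\sum_{l}s_{j,k,l}$ and $\sum_{k}[\sum_{l}s_{j,k,l}]\delta_{Y_{k}}$ converge. Convergence is guaranteed by the standing integrability condition $\int_{0}^{\infty}\min(s,1)\rho_{j}(s)\,ds<\infty$ together with the finiteness of $B_{0}$ ensuring $\mu_{j}$ is a well-defined $\mathrm{CRM}(\rho_{j},B_{0})$, while the independence is inherited from the defining property of the $\mathrm{PRM}$ on disjoint sets.
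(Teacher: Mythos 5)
Your proposal is correct and follows essentially the same route as the paper: expand $B_{0}=\sum_{k}\tau_{k}\delta_{Y_{k}}$ inside the mean intensity~(\ref{PoissonMeanjoint}) to obtain the per-atom intensities $\tau_{k}\rho_{j}$, then verify the identifications via conditional Laplace functionals given $B_{0}$. You simply spell out the standard mapping/marking-theorem and Campbell-formula details that the paper leaves implicit.
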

\begin{proof}Expanding $B_{0}=\sum_{k=1}^{\infty}\tau_{k}\delta_{Y_{k}},$ in~(\ref{PoissonMeanjoint}), leads to the mean measures $\left[\prod_{i=1}^{M_{j}}G_{A_{j}}(da^{(i)}_{j}|s_{j})\right]\tau_{k}\rho_{j}(s_{j})\delta_{Y_{k}}(d\omega),$ for  $k=1,2,\ldots,$  and each fixed $j\in[J].$ The result can then be verified by looking at the respective Laplace functionals given $B_{0}.$ 
\end{proof}

For fixed $\lambda,$ $\sigma_{j}(\lambda)$ correspond to random variables with non-negative infinitely divisible distributions. In particular, if $\rho_{j}$ is a finite measure, then $\sigma_{j}(\lambda)$ correspond to a compound Poisson distribution, otherwise it corresponds to continuous infinitely divisible variables on $(0,\infty).$  Notice that even if $s_{j,k,l}$ take values in $[0,1],$ the infinite sums in the form of $\sigma_{j,k}(\lambda)$ can take values larger than $1$ with positive probability. As an example if $\mu_{j}|B_{0}\sim \mathrm{BP}(\theta_{j},B_{0}).$ as in (\ref{givenB1}), $\sigma_{j,k}(\lambda)$ has a generalized Dickman distribution~\cite{Penrose}. 
This leads to the following result, which is a variant of known results. 
\begin{cor}Denote the distribution  of $\sigma_{j}(\lambda),$ for fixed $\lambda,$ as $f_{j}(v|\lambda):=\mathbb{P}(\sigma_{j}(\lambda)\in dv)/dv.$ Then the joint un-conditional distribution of $\mu_{0}:=(\mu_{j},j\in [J]),$ corresponds to a sub-class of multivariate CRM, as described in~\cite[Section 5]{James2017}, constructed from a Poisson random measure with mean intensity
$\rho_{0}(v_{1},\ldots,v_{J})G_{0}(dy),$ where 
$
\rho_{0}(v_{1},\ldots,v_{J})=\int_{0}^{\infty}\left[\prod_{j=1}^{J}f_{j}(v_{j}|\lambda)\right]\tau_{0}(\lambda)d\lambda
$ 
is the joint L\'evy density of $\mu_{0}.$ 
\end{cor}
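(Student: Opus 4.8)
The plan is to exploit the additive representation of Proposition~\ref{HIBPsumrep}(i) and to identify $\mu_0=(\mu_j,j\in[J])$ through its joint unconditional Laplace functional, which I will match against that of the multivariate CRM driven by $\rho_0(\mathbf v)G_0(dy)$. Throughout write $\phi_j(\kappa):=\int_0^1(1-{\mbox e}^{-\kappa s})\rho_j(s)\,ds$ for the Laplace exponent of the subordinator $\sigma_j$, so that $f_j(\cdot\,|\lambda)$ is the density of $\sigma_j(\lambda)$ and $\int_0^\infty{\mbox e}^{-\kappa v}f_j(v|\lambda)\,dv={\mbox e}^{-\lambda\phi_j(\kappa)}$.

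First I would condition on $B_0=\sum_k\tau_k\delta_{Y_k}$. By Proposition~\ref{HIBPsumrep}(i) each $\mu_j$ places the jump $\sigma_{j,k}(\tau_k)$ at the shared atom $Y_k$, and, conditionally on $B_0$, these jumps are independent across both $j$ and $k$ since the $\sigma_{j,k}$ are independent copies. Hence for nonnegative test functions $(g_j)$,
\[
\mathbb{E}\Big[{\mbox e}^{-\sum_j\int g_j\,d\mu_j}\,\Big|\,B_0\Big]
=\prod_k\prod_j{\mbox e}^{-\tau_k\phi_j(g_j(Y_k))}
=\exp\Big(-\int_\Omega\textstyle\sum_j\phi_j(g_j(y))\,B_0(dy)\Big).
\]
Taking expectations over $B_0\sim\mathrm{CRM}(\tau_0,G_0)$ and using its Laplace functional (the exponent in~(\ref{BLaplaceexponent})) applied to $y\mapsto\sum_j\phi_j(g_j(y))$ gives
\[
\mathbb{E}\Big[{\mbox e}^{-\sum_j\int g_j\,d\mu_j}\Big]
=\exp\Big(-\int_\Omega\int_0^\infty\big(1-{\mbox e}^{-\lambda\sum_j\phi_j(g_j(y))}\big)\tau_0(\lambda)\,d\lambda\,G_0(dy)\Big).
\]

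It then remains to recognise the right-hand side as the Laplace functional of the multivariate CRM with joint L\'evy density $\rho_0$. Substituting $\rho_0(\mathbf v)=\int_0^\infty\prod_j f_j(v_j|\lambda)\tau_0(\lambda)\,d\lambda$ and interchanging the order of integration (Fubini), the inner $J$-fold integral factorises: $\int_{(0,\infty)^J}{\mbox e}^{-\sum_j v_jg_j(y)}\prod_j f_j(v_j|\lambda)\,d\mathbf v=\prod_j{\mbox e}^{-\lambda\phi_j(g_j(y))}$, whereas the same integral with the exponential deleted equals $1$ because each $f_j(\cdot\,|\lambda)$ integrates to one. This produces precisely $\int_0^\infty(1-{\mbox e}^{-\lambda\sum_j\phi_j(g_j(y))})\tau_0(\lambda)\,d\lambda$, so the two Laplace functionals agree and $\mu_0$ is the asserted multivariate CRM in the sense of~\cite[Section 5]{James2017}.

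A shorter conceptual route is Kingman's marking theorem: the pairs $(\tau_k,Y_k)$ form a $\mathrm{PRM}(\tau_0(\lambda)\,d\lambda\,G_0(dy))$, attaching to each the mark $(\sigma_{1,k}(\tau_k),\ldots,\sigma_{J,k}(\tau_k))$ with conditional law $\prod_j f_j(v_j|\tau_k)$ yields a marked PRM, and projecting out $\lambda=\tau_k$ integrates its intensity against $\tau_0$ to give $\rho_0$. The one substantive point needing care is that $\rho_0$ is a genuine joint L\'evy density, that is, that $\mu_0$ is a well-defined CRM; this I would verify by setting $g_j\equiv1$ above, which yields $\int_{(0,\infty)^J}(1-{\mbox e}^{-\sum_j v_j})\rho_0(\mathbf v)\,d\mathbf v=\tilde\psi_0\big(\sum_j\phi_j(1)\big)<\infty$, finite because $\tilde\psi_0$ is finite at finite argument. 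I expect this integrability bookkeeping, together with justifying the Fubini interchange, to be the only (and essentially routine) technical obstacle.
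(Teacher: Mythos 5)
Your proof is correct and follows exactly the route the paper intends: the paper states this corollary without proof as ``a variant of known results,'' but the proof of Proposition~\ref{HIBPsumrep} (``verified by looking at the respective Laplace functionals given $B_{0}$'') and the adjacent remark pointing to joint Laplace functional characterizations in~\cite[Section 2]{CamerLP2} make clear that conditioning on $B_{0}$, computing the joint Laplace functional, and matching it to that of the multivariate CRM with L\'evy density $\rho_{0}$ is precisely the intended argument. Your only implicit simplification is treating $f_{j}(\cdot|\lambda)$ as a proper density on $(0,\infty)$ even in the compound Poisson case where $\sigma_{j}(\lambda)$ has an atom at zero, but the corollary's own statement makes the same abuse of notation, so this is not a gap.
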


\begin{rem}
Dependent $\mathrm{CRM}$ defined in the same hierarchical manner as $(\mu_{j},j\in[J])$ are employed in \cite{CamerLP2}. See~\cite[Section 2]{CamerLP2} for other equivalent distributional characterizations using joint Laplace functionals. 
\end{rem}
  
\subsection{Relationship between Bernoulli HIBP and Poisson IBP models}In Proposition~\ref{HIBPsumrep}, if $A_{j}|p$ is $\mathrm{Bernoulli}(p)$ then  $A^{(i)}_{j,k,l}|s_{j,k,l}\overset{ind}\sim  \mathrm{Bernoulli}(s_{j,k,l})$ and $\sum_{i=1}^{M_{j}}A^{(i)}_{j,k,l}\overset{d}=B_{j,k,l},$ where $B_{j,k,l}\sim \mathrm{Binomial}(M_{j},s_{j,k,l}).$
Since, it is well known for a standard Bernoulli process that its sum over $\Omega$ is Poisson distributed, it follows that marginally $\sum_{l=1}^{\infty}A^{(i)}_{j,k,l},$ are conditional on $B_{0},$ Poisson random variables.  We now focus on the across group behavior of the Bernoulli based HIBP and show that given $B_{0}$ the univariate process equates in distribution to across and within  group  parameter specific variants of the Poisson IBP in~\cite{Titsias}, and otherwise has similarities to the structure of the processes in~\cite{Caron2012} . This across group Poisson structure will play a fundamental role in the characterization of all mixed spike and slab HIBP models. In this section $\gamma_{j,i}=\int_{0}^{1}p(1-p)^{i-1}\rho_{j}(p)dp,$ with 
$\psi_{j}(M_{j})=\sum_{i=1}^{M_{j}}\gamma_{j,i}=\int_{0}^{1}(1-{(1-p)}^{M_{j}})\rho_{j}(p)dp$ however see Remark~
\ref{genAbernoulli} below.

Suppose for a moment that $B_{0}$ has no atoms, then in  the case of the standard Bernoulli IBP, viewed sequentially $(Z^{(i)}_{j},i\in[M_{j}])|B_{0}$ consists of Poisson numbers of (new/unique) dishes selected by each new  customer and otherwise points shared among the previous customers.  Using known characterizations of the predictive and marginal distributions~\cite{GriffithsZ,Griffiths1,Thibaux}, as expressed in~\cite[Propositions 3.2 and 3.3]{James2017} for $A_{j}|p\sim \mathrm{Bernoulli}(p),$ the $M_{j}$ conditionally independent processes corresponding to dishes/points newly picked by each customer may be represented as 
\begin{equation}
(\tilde{Z}_{j,i},i\in [M_{j}])\overset{d}=(\sum_{\ell=1}^{\xi^{(i)}_{j}}\delta_{\omega^{(i)}_{j,\ell}}, i\in [M_{j}])
\label{pickedprocess}
\end{equation}
with $Z^{(1)}_{j}\overset{d}=\tilde{Z}_{j,1},$  
$(\xi^{(i)}_{j}, i\in [M_{j}])|B_{0}\overset{ind}\sim \mathrm{Poisson}(\gamma_{j,i}B_{0}(\Omega))$ variables, for $i\in [M_{j}],$ and where $(\omega^{(i)}_{j,\ell},\ell\in[\xi^{(i)}_{j}])|B_{0}$ are conditionally iid draws from $\bar{B}_{0}(d\omega)=B_{0}(d\omega)/B_{0}(\Omega),$ independent across $(i\in[M_{j}]),$ (hence if $B_{0}$ is non-atomic the processes in (\ref{pickedprocess}) share no common dishes/features). $\xi_{j}\overset{d}=\sum_{i=1}^{M_{j}}\xi^{(i)}_{j}\sim \mathrm{Poisson}(\psi_{j}(M_{j})B_{0}(\Omega))$ represents the total number of new dishes picked by the $M_{j}$ customers 
and 
\begin{equation}
\sum_{i=1}^{M_{j}}\tilde{Z}_{j,i}\overset{d}=(\sum_{\ell=1}^{\xi_{j}}\delta_{\tilde{\omega}_{j,\ell}})
\label{pickedprocessM}
\end{equation}
where $(\tilde{\omega}_{j,\ell})|B_{0}$ are iid $\bar{B}_{0},$ and for $\xi_{j}=d_{j},$ $\sum_{i=1}^{M_{j}}\tilde{Z}_{j,i}|B_{0}$ has distribution,
\begin{equation}
{[\psi_{j}(M_{j})]}^{d_{j}}{\mbox e}^{-\psi_{j}(M_{j})B_{0}(\Omega)}\prod_{\ell=1}^{d_{j}}B_{0}(d\tilde{\omega}_{j,\ell}).
\label{margjbigacross}
\end{equation}

Naturally when $B_{0}=\sum_{k=1}^{\infty}\tau_{k}\delta_{Y_{k}},$ the points picked in~(\ref{pickedprocess}) are no longer distinct. However, if we write $((s^{(i)}_{j,\ell},\omega^{(i)}_{j,\ell}),i\in [M_{j}])$ to be the concomittant pairs of jumps and atoms selected from $(s_{j,k},\omega_{j,k}),$  where $\mu_{j}=\sum_{k=1}^{\infty}s_{j,k}\delta_{\omega_{j,k}},$  by each customer, these pairs are distinct, since 
$(s^{(i)}_{j,\ell})$ are distinct.
Throughout the rest of this manuscript, let $((\mathscr{P}_{j,k}(\lambda),j\in[J]))$ and 
$((\mathscr{P}^{(i)}_{j,k}(\lambda),i\in [M_{j]}))$ denote independent collections of Poisson variables with mean $\lambda.$ 

\begin{prop}\label{PropBernoulliPoissonsharing}
Consider the HIBP in the Bernoulli case where $A_{j}|p\sim \mathrm{Bernoulli}(p),$ and $\mu_{j}|B_{0},$ are independent $\mathrm{CRM}(\rho_{j},B_{0})$ for $j\in [J],$ and $B_{0}=\sum_{k=1}^{\infty}\tau_{k}\delta_{Y_{k}}\sim \mathrm{CRM}(\tau_{0},G_{0}).$ Then the conditional distribution of the processes, defined in~(\ref{pickedprocess}), given $B_{0}$ are conditionally independent across $i\in [M_{j}]$ and $j\in [J],$ with 
\begin{equation}
((\tilde{Z}_{j,i},i\in[M_{j}]),j\in[J])\overset{d}=((\sum_{k=1}^{\infty}\mathscr{P}^{(i)}_{j,k}(\gamma_{j,i}\tau_{k})\delta_{Y_{k}},i\in [M_{j}]),j\in [J])
\label{BerPoisson}
\end{equation}
corresponding to within and across group individual parameter specified extensions of the Poisson IBP~\cite{Titsias} or multi-graph variations of~\cite{Caron2012}. Additionally
\begin{enumerate}
\item[(i)]$((\sum_{i=1}^{M_{j}}\tilde{Z}_{j,i},j\in[J])\overset{d}=((\sum_{k=1}^{\infty}\mathscr{P}_{j,k}(\psi_{j}(M_{j})\tau_{k})\delta_{Y_{k}},j\in [J])$
\item[(ii)] For each $(j,k),$
$(\mathscr{P}^{(i)}_{j,k}(\gamma_{j,i}\tau_{k}),i\in [M_{j}])|B_{0}$ has the joint probability mass function 
\begin{equation}
\frac{n_{j,k}!}{\prod_{i=1}^{M_{j}}n_{j,i,k}!}
\prod_{i=1}^{M_{j}}\pi^{n_{j,j,k}}_{j,i}
\times\frac{{(\psi_{j}(M_{j})\tau_{k})}^{n_{j,k}}{\mbox e}^{-\psi_{j}(M_{j})\tau_{k}}}{n_{j,k}!}
\label{BerPoisson1pmf}
\end{equation}
for $\sum_{i=1}^{M_{j}}n_{j,i,k}=n_{j,k},$ and $n_{j,k}\ge 0,$ and $\pi_{j,i}=\gamma_{j,i}/\psi_{j}(M_{j}).$
\item[(iii)]For each $k,$ $(\mathscr{P}_{j,k}(\psi_{j}(M_{j})\tau_{k}),j\in [J])|B_{0}$ has the joint probability mass function 
\begin{equation}
\frac{n_{k}!}{\prod_{j=1}^{J}n_{j,k}!}
\prod_{j=1}^{J}\pi^{n_{j,k}}_{j}
\times\frac{{([\sum_{j=1}^{J}\psi_{j}(M_{j})]\tau_{k})}^{n_{k}}{\mbox e}^{-[\sum_{j=1}^{J}\psi_{j}(M_{j})]\tau_{k}}}{n_{k}!}
\label{BerPoisson2pmf}
\end{equation}
for $\sum_{j=1}^{J}n_{j,k}=n_{k},$ and $n_{k}\ge 0,$ and $\pi_{j}=\psi_{j}(M_{j})/\sum_{l=1}^{J}\psi_{l}(M_{l}).$
\end{enumerate}
\end{prop}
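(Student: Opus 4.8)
The plan is to reduce the entire proposition to elementary properties of Poisson random measures, taking the description of the newly-picked processes in \eqref{pickedprocess} as the starting point. Recall that \eqref{pickedprocess} presents $(\tilde{Z}_{j,i},i\in[M_j])$, conditionally on $B_0$, as processes that are independent across $i$, each being a Poisson random measure on $\Omega$ with mean intensity $\gamma_{j,i}B_0(d\omega)$ (equivalently, a $\mathrm{Poisson}(\gamma_{j,i}B_0(\Omega))$ number of points with locations drawn iid from $\bar{B}_0$). This description never used that $B_0$ is non-atomic; atomicity only governs whether the sampled locations coincide. Independence across $j$ is inherited from the conditional independence of $\mu_1,\ldots,\mu_J$ given $B_0$, since $(\tilde{Z}_{j,i},i\in[M_j])$ is a functional of the group-$j$ randomness alone.

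First I would establish \eqref{BerPoisson}. Substituting the purely atomic $B_0=\sum_k\tau_k\delta_{Y_k}$ into the intensity $\gamma_{j,i}B_0$ and applying the restriction property of a Poisson random measure, the number of points of $\tilde{Z}_{j,i}$ located at the atom $Y_k$ is $\mathrm{Poisson}(\gamma_{j,i}B_0(\{Y_k\}))=\mathrm{Poisson}(\gamma_{j,i}\tau_k)$, and these counts are mutually independent across $k$ because disjoint measurable sets (here the singletons $\{Y_k\}$) carry independent Poisson counts. Since $\gamma_{j,i}B_0$ is itself purely atomic, $\tilde{Z}_{j,i}$ is supported entirely on $\{Y_k\}$, so writing $\mathscr{P}^{(i)}_{j,k}(\gamma_{j,i}\tau_k)$ for these counts yields $\tilde{Z}_{j,i}\overset{d}=\sum_k\mathscr{P}^{(i)}_{j,k}(\gamma_{j,i}\tau_k)\delta_{Y_k}$, with the full collection independent across $(i,j)$ by the remarks above. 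For a fully self-contained verification one may instead match the conditional (given $B_0$) Laplace functionals of the two sides, exactly as in the proof of Proposition~\ref{HIBPsumrep}.

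For part (i) I would sum \eqref{BerPoisson} over $i$: for each fixed $(j,k)$ the variables $\mathscr{P}^{(i)}_{j,k}(\gamma_{j,i}\tau_k)$, $i\in[M_j]$, are independent Poisson, hence their sum is $\mathrm{Poisson}((\sum_{i=1}^{M_j}\gamma_{j,i})\tau_k)=\mathrm{Poisson}(\psi_j(M_j)\tau_k)$ by the identity $\sum_{i=1}^{M_j}\gamma_{j,i}=\psi_j(M_j)$ recorded above (cf. \eqref{genpsi}), giving the representation in terms of $\mathscr{P}_{j,k}(\psi_j(M_j)\tau_k)$. Parts (ii) and (iii) are then the classical Poisson--multinomial factorization: the joint pmf of the independent Poissons $(\mathscr{P}^{(i)}_{j,k}(\gamma_{j,i}\tau_k),i\in[M_j])$ is $\prod_i(\gamma_{j,i}\tau_k)^{n_{j,i,k}}e^{-\gamma_{j,i}\tau_k}/n_{j,i,k}!$, which upon writing $\gamma_{j,i}=\pi_{j,i}\psi_j(M_j)$ and collecting the total $n_{j,k}=\sum_i n_{j,i,k}$ regroups into the multinomial-times-Poisson form \eqref{BerPoisson1pmf}; the identical computation applied across $j$ to the independent Poissons of part (i), with $\pi_j=\psi_j(M_j)/\sum_l\psi_l(M_l)$, produces \eqref{BerPoisson2pmf}.

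The only genuinely substantive step is the identification in \eqref{BerPoisson}: recognizing the newly-picked process, for atomic $B_0$, as a Poisson random measure whose mean is a multiple of $B_0$ and which therefore decomposes atom-by-atom into independent Poisson counts of mean $\gamma_{j,i}\tau_k$. Once this structural observation is secured, the remaining assertions follow from the superposition of independent Poisson variables and the Poisson--multinomial identity, requiring no further probabilistic input.
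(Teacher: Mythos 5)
Your proposal is correct and follows essentially the same route as the paper: both start from the representation in \eqref{pickedprocess} of $\tilde{Z}_{j,i}\,|\,B_0$ as a Poisson random measure with mean $\gamma_{j,i}B_0$ and then specialize to the atomic $B_0=\sum_k\tau_k\delta_{Y_k}$ (the paper does this via the conditional Laplace exponent $\sum_k\gamma_{j,i}\tau_k(1-{\mbox e}^{-f_{j,i}(Y_k)})$, which is exactly the alternative verification you mention). Your explicit derivations of (i)--(iii) via Poisson superposition and the Poisson--multinomial factorization correctly fill in details the paper leaves implicit.
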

\begin{proof}
The result is indicated by utilizing the distributional representations in~(\ref{pickedprocess}) and then using the discrete representation of $B_{0}=\sum_{k=1}^{\infty}\tau_{k}\delta_{Y_{k}},$ which indicates
that for measurable functions $f_{j,i}$ over $\Omega$, the Laplace exponent $-\log(\mathbb{E}[{\mbox e}^{-\tilde{Z}_{j,i}(f_{j,i})}|B_{0}])=\gamma_{j,i}B_{0}(\Omega)\int_{\Omega}(1-{\mbox e}^{-f_{j,i}(\omega)})\bar{B}_{0}(d\omega),$ which is equvalent to
$
\sum_{k=1}^{\infty}\gamma_{j,i}\tau_{k}(1-e^{-f_{j,i}(Y_{k})}).
$
\end{proof}
\begin{rem}
 In the notation of~\cite[Propositions 3.2,3.3]{James2017} the conditional distribution of $(\tilde{Z}_{j,i},i\in [M_{j}])|B_{0},$ are $\mathrm{IBP}(A_{j},\rho_{j,i-1}B_{0})$ for $i\in[M_{j}]$ and $A_{j}|p\sim \mathrm{Bernoulli}(p).$
\end{rem}

\begin{rem}\label{genAbernoulli}Setting $b_{j}=\mathbb{I}_{\{A_{j}\neq 0\}}|s\sim \mathrm{Bernoulli}(\pi_{A_{j}}(s)),$ describes the common Bernoulli process mechanism which dictates the absence or presence of a feature in a general spike and slab process. The result in Proposition \ref{PropBernoulliPoissonsharing}  hold for this parametrization of $b_{j},$ with the representations in 
(\ref{gamij}) and (\ref{genpsi}).
\end{rem}
\begin{rem}
As communicated to us by Jim Pitman, the distributional representations~(\ref{BerPoisson1pmf})
 and (\ref{BerPoisson2pmf}), correspond to a classic relationship between Multinomial and independent Poisson  variables, attributed to Fisher~\cite{FisherPoisson} and independently Soper~\cite{Soper}. 
These types of representations figure prominently in 
\cite[Corollary 3]{ZhouCarin2015}, \cite[Lemma 4.1]{Zhou1}, \cite{ZhouPadilla2016}.
\end{rem}

\subsection{Hierarchical Poisson IBP and mixed Poisson processes}
We now explore Proposition~\ref{HIBPsumrep} in the quite interesting case where $A_{j}|\lambda_{j}$ are $\mathrm{Poisson}(r_{j}\lambda_{j})$ variables for $j\in[J],$ which corresponds to hierarchical versions of the Poisson IBP models of \cite{Titsias}. The next result shows that this particular hierarchical framework leads to processes of multi-group IBP processes based on infinitely divisible mixed Poisson distributions. Furthermore, it provides a clear important connection to the work of \cite{ZhouCarin2015,Zhou1, ZhouPadilla2016} on random count matrices. 
\begin{prop}\label{mixedPoisson}Suppose that $A_{j}|\lambda_{j}$ is $\mathrm{Poisson}(r_{j}\lambda_{j})$ for $j\in [J],$ yielding hierarchical versions of the Poisson IBP of~\cite{Titsias}. Then given $\mu_{1},\ldots,\mu_{J},B_{0},$ 
$
Z^{(i)}_{j}\overset{d}=\sum_{k=1}^{\infty}\mathscr{P}^{(i)}_{j,k}(r_{j}s_{j,k})\delta_{\omega_{j,k}}
$
and $\sum_{i=1}^{M_{j}}Z^{(i)}_{j}\overset{d}=\sum_{k=1}^{\infty}\mathscr{P}_{j,k}(M_{j}r_{j}s_{j,k})\delta_{\omega_{j,k}}$ for where $\mu_{j}=\sum_{k=1}^{\infty}s_{j,k}\delta_{\omega_{j,k}}|B_{0}$ is $\mathrm{CRM}(\rho_{j},B_{0}).$ Then $((Z^{(i)}_{j},i\in [M_{j}]),j\in[J])|B_{0}$ is conditionally independent across $j\in [J],$ representable as 
\begin{equation}
((Z^{(i)}_{j},i\in [M_{j}]),j\in[J])\overset{d}=
((\sum_{k=1}^{\infty}\mathscr{P}^{(i)}_{j,k}(r_{j}\sigma_{j,k}(\tau_{k}))\delta_{Y_{k}},i\in [M_{j}]), j\in [J]),
\label{FullPoissHIBP}
\end{equation}
where $\mathscr{P}_{j,k}(M_{j}r_{j}\sigma_{j,k}(\tau_{k}))=\sum_{i=1}^{M_{j}}\mathscr{P}^{(i)}_{j,k}(r_{j}\sigma_{j,k}(\tau_{k})).$ This is deduced from the following distributional representations corresponding to the setting in Proposition~\ref{HIBPsumrep}. 
\begin{enumerate}
\item[(i)]$((\sum_{l=1}^{\infty}A^{(i)}_{j,k,l},i\in [M_{j}]))\overset{d}=((\mathscr{P}^{(i)}_{j,k}(r_{j}\sigma_{j,k}(\tau_{k})),i\in [M_{j}]))$
\item[(ii)]The distribution of 
$(\mathscr{P}^{(i)}_{j,k}(r_{j}\sigma_{j,k}(\tau_{k})),i\in [M_{j}])|\mathscr{P}_{j,k}(M_{j}r_{j}\sigma_{j,k}(\tau_{k}))=n_{j,k}$ is  $\mathrm{Multinomial}(n_{j,k},(\pi_{j,i},i\in[M_{j}])),$ for $\pi_{j,i}=1/{M_{j}}$ for $i\in[M_{j}].$  
\item[(iii)]
$(\mathscr{P}^{(i)}_{j,k}(r_{j}\sigma_{j,k}(\tau_{k})),i\in [M_{j}])|B_{0}$ has the joint probability mass function 
\begin{equation}
\frac{n_{j,k}!}{\prod_{i=1}^{M_{j}}n_{j,i,k}!}
\prod_{i=1}^{M_{j}}\pi^{n_{j,i,k}}_{j,i}
\times\frac{\mathbb{E}\left[{(M_{j}r_{j}\sigma_{j,k}(\tau_{k}))}^{n_{j,k}}{\mbox e}^{-M_{j}r_{j}\sigma_{j,k}(\tau_{k})}|\tau_{k}\right]}{n_{j,k}!}
\label{mixedPoisson1pmf}
\end{equation}
for $\sum_{i=1}^{M_{j}}n_{j,i,k}=n_{j,k},$ and $n_{j,k}\ge 0,$ and where in this case $\prod_{i=1}^{M_{j}}\pi^{n_{j,j,k}}_{j,i}={M_{j}}^{-n_{j,k}}.$  
\item[(iv)]The distribution of $((\mathscr{P}_{j,k}(M_{j}r_{j}\sigma_{j,k}(\tau_{k})),j\in [J]))$ given $B_{0}$ may be expressed
\begin{equation}
\prod_{j=1}^{J}\frac{\mathbb{E}\left[{(M_{j}r_{j}\sigma_{j,k}(\tau_{k}))}^{n_{j,k}}{\mbox e}^{-M_{j}r_{j}\sigma_{j,k}(\tau_{k})}|\tau_{k}\right]}{n_{j,k}!}.
\label{mixedPoisson2pmf}
\end{equation}
\end{enumerate}
\end{prop}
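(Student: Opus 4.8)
The plan is to build directly on Proposition~\ref{HIBPsumrep}, specializing the spike and slab law $G_{A_j}$ to the Poisson case $G_{A_j}(\cdot\,|\,s)=\mathrm{Poisson}(r_j s)$, for which the spike $1-\pi_{A_j}(s)=e^{-r_j s}$ and the zero-truncated slab recombine so that $A^{(i)}_{j,k}\,|\,s_{j,k}\sim\mathrm{Poisson}(r_j s_{j,k})$ with no further bookkeeping. First I would establish the representation conditional on $\mu_1,\dots,\mu_J,B_0$: since the marks $(A^{(i)}_{j,k},i\in[M_j])$ attached to the jump $s_{j,k}$ of $\mu_j$ are, by the Poisson random measure structure of (\ref{PoissonMeanjoint}), conditionally iid $\mathrm{Poisson}(r_j s_{j,k})$ across $i$ and independent across $k$, one reads off $Z^{(i)}_j\overset{d}{=}\sum_k\mathscr{P}^{(i)}_{j,k}(r_j s_{j,k})\delta_{\omega_{j,k}}$ at once, and the superposition property of independent Poissons gives $\sum_{i=1}^{M_j}Z^{(i)}_j\overset{d}{=}\sum_k\mathscr{P}_{j,k}(M_j r_j s_{j,k})\delta_{\omega_{j,k}}$.

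The core step is part (i), which replaces the jumps $s_{j,k}$ of $\mu_j$ by the subordinator representation $\mu_j\overset{d}{=}\sum_k\sigma_{j,k}(\tau_k)\delta_{Y_k}$ of Proposition~\ref{HIBPsumrep}, where $\sigma_{j,k}(\tau_k)=\sum_l s_{j,k,l}$ and the $(s_{j,k,l})_l$ are the points of a $\mathrm{PRM}$ with mean $\tau_k\rho_j$. Fix $(j,k)$ and condition on the whole configuration $(s_{j,k,l})_l$. For each $i$, $\sum_l A^{(i)}_{j,k,l}$ is then a sum of independent $\mathrm{Poisson}(r_j s_{j,k,l})$ variables, hence $\mathrm{Poisson}\big(r_j\sum_l s_{j,k,l}\big)=\mathrm{Poisson}(r_j\sigma_{j,k}(\tau_k))$, and these sums are independent across $i$. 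Because this conditional law depends on the configuration only through $v=\sigma_{j,k}(\tau_k)$, the tower property lets me drop down to conditioning on $\sigma_{j,k}(\tau_k)$ alone, yielding (i). I expect the one point needing care here to be the interchange of the countable superposition over $l$ with the Poisson additivity structure together with the almost sure finiteness of $\sigma_{j,k}(\tau_k)$; the cleanest route is to verify the identity at the level of conditional probability generating functions given $B_0$, where the sum over $l$ becomes an integral against the $\mathrm{PRM}$ mean $\tau_k\rho_j$ and collapses by the exponential formula.

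Parts (ii)--(iv) are then consequences. Given $\sigma_{j,k}(\tau_k)=v$, the vector $(\mathscr{P}^{(i)}_{j,k}(r_j v),i\in[M_j])$ is a family of independent Poissons with common mean $r_j v$; conditioning on their total $\mathscr{P}_{j,k}(M_j r_j v)=n_{j,k}$ gives the classic Poisson-to-Multinomial thinning of Fisher and Soper, and equality of the means forces the cell probabilities $\pi_{j,i}=1/M_j$, which is (ii). For (iii) I would take the conditional expectation over $\sigma_{j,k}(\tau_k)$ given $\tau_k$: writing $\prod_i (r_j v)^{n_{j,i,k}}e^{-r_j v}=(r_j v)^{n_{j,k}}e^{-M_j r_j v}$ and inserting the factors $n_{j,k}!$ and $M_j^{n_{j,k}}$ factorizes the pmf into the multinomial coefficient, the term $\prod_i\pi_{j,i}^{n_{j,i,k}}=M_j^{-n_{j,k}}$, and the mixed-Poisson marginal $\mathbb{E}[(M_j r_j\sigma_{j,k}(\tau_k))^{n_{j,k}}e^{-M_j r_j\sigma_{j,k}(\tau_k)}\,|\,\tau_k]/n_{j,k}!$, matching (\ref{mixedPoisson1pmf}).

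Finally, (iv) and the conditional independence across $j$ asserted in (\ref{FullPoissHIBP}) follow because the $\mu_j\,|\,B_0$, equivalently the subordinators $(\sigma_{j,k})_k$, are independent across $j$, and the Poisson marking is carried out independently within each group; hence the joint law given $B_0$ factorizes over $j$ into the per-group marginals computed in (iii), giving the product form (\ref{mixedPoisson2pmf}). Assembling these pieces over $k$ and $i$ and appealing once more to the superposition property recovers the full representation (\ref{FullPoissHIBP}).
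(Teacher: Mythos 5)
Your proposal is correct and follows essentially the same route as the paper: the paper's proof also reduces everything to part (i) and concludes via the Poisson superposition identity $\sum_{l}\mathscr{P}^{(i)}_{j,k,l}(r_{j}s_{j,k,l})\overset{d}=\mathscr{P}^{(i)}_{j,k}(r_{j}\sum_{l}s_{j,k,l})$ applied to the decomposition of Proposition~\ref{HIBPsumrep}, treating (ii)--(iv) as straightforward consequences. Your additional care about the countable superposition (via conditional generating functionals) and the explicit derivations of (ii)--(iv) simply flesh out steps the paper leaves implicit.
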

\begin{proof}It suffices to verify statement [(i)] as the others are straightforward. In this case, $A^{(i)}_{j,k,l}|s_{j,k,l}$ is equivalent in distribution to a Poisson variable $\mathscr{P}^{(i)}_{j,k,l}(r_{j}s_{j,k,l}),$ which are otherwise conditionally independent across $(j,k,l).$ The result follows from
$\sum_{l=1}^{\infty}\mathscr{P}^{(i)}_{j,k,l}(r_{j}s_{j,k,l})\overset{d}=\mathscr{P}^{(i)}_{j,k}(r_{j}\sum_{l=1}^{\infty}s_{j,k,l})$
\end{proof}
\begin{rem}Unlike the Bernoulli case in Proposition~\ref{PropBernoulliPoissonsharing}, Proposition~\ref{mixedPoisson} provides a complete description of the conditional distribution given $B_{0}$ of the entire Poisson based HIBP. However we shall not rely on these representations for practical implementation.
\end{rem}
Next we discuss some strong relations to the work of \cite{ZhouCarin2015,Zhou1, ZhouPadilla2016}
\subsubsection{Connection to priors for random count matrices}   
Quite interestingly, with reference to~(\ref{GGLevy}), if $\mu_{1},\ldots,\mu_{J}|B_{0}\overset{ind}\sim \mathrm{GG}(0,\zeta_{j},\theta_{j}),$ corresponding to independent Gamma processes, it follows that $\sigma_{j,k}(\tau_{k})|\tau_{k}$ are  independent $\mathrm{Gamma}(\theta_{j}\tau_{k},\zeta_{j})$ variables, for $j\in [J].$ Then the distribution in~(\ref{mixedPoisson2pmf}),corresponds to components having  Negative-Binomial distributions with parameters $(\theta_{j}\tau_{k},q_{j}),$ for $q_{j}=M_{j}r_{j}/(\zeta_{j}+M_{j}r_{j}),$ for $j\in [J],$ write such variables as  $(\mathscr{N}_{j,k}(\theta_{j}\tau_{k},q_{j}))$ 
In addition, for any choice of $B_{0}$, each process $(Z^{(i)}_{j}, i\in [M_{j})|B_{0}$  coincides with conditionally independent processes that generate  gamma-Poisson or Negative Binomial process random count matrices as described in~\cite[Section 1.2.3]{ZhouPadilla2016}. 
When $B_{0}\sim \mathrm{GG}(0,\zeta,\theta_{0})$ the processes $(\sum_{i=1}^{M_{j}}Z^{(i)}_{j},j\in[J])|B_{0}$ have the representation,
$$
(\sum_{k=1}^{\infty}\mathscr{P}_{j,k}(M_{j}r_{j}\sigma_{j,k}(\tau_{k}))\delta_{Y_{k}},j\in [J])
\overset{d}=
(\sum_{k=1}^{\infty}\mathscr{N}_{j,k}(\theta_{j}\tau_{k},q_{j})\delta_{Y_{k}},j\in [J]),
$$
modelling across group sharing for $j\in [J],$ then coincides with the generation of Gamma Negative Binomial  random count matrix processes in ~\cite[Section 2.2]{ZhouPadilla2016}, see also~\cite[Section 5.1]{ZhouCarin2015}. 
So in this case the hierarchical Poisson IBP generates simultaneously both these types of count matrices as it applies to within and across group sharing respectively. Proposition~\ref{mixedPoisson} shows this extends to processes based on quite general infinitely divisible mixed Poisson distributions. See the forthcoming Section~\ref{PoissonCase} for details on how to practically implement these processes, which are based on the developments we begin to discuss next.

\subsection{General joint representations given $B_{0}$}
Proposition~\ref{mixedPoisson} gives a complete description of the Poisson based HIBP conditional distribution given $B_{0}.$ Here we provide a description for all processes in~(\ref{mixedHIBP}). We note that these representations also provide important alternative descriptions, to Proposition~\ref{mixedPoisson}, in the Poisson HIBP case. We first obtain a preliminary description,

\begin{lem}\label{LemmaprelimjointB}
Consider the general specifications in~(\ref{mixedHIBP}). Then the conditional distribution of  $((Z^{(i)}_{j},i\in [M_{j}]),j\in[J])|B_{0}$ is such that it is conditionally independent across $j\in [J]$ satisfying for each $j,$
\begin{equation}
(Z^{(i)}_{j},i\in [M_{j}])\overset{d}=(\sum_{\ell=1}^{\xi_{j}}
\hat{A}^{(i)}_{j,\ell}
\delta_{{\tilde{\omega}}_{j,\ell}},i\in [M_{j}])
\label{Jointmarginalpartitionrep}
\end{equation}
where $\xi_{j}|B_{0}\sim \mathrm{Poisson}(\psi_{j}(M_{j})B_{0}(\Omega)),$ $(\tilde{\omega}_{j,\ell})|B_{0}$ are  iid $\bar{B}_{0},$ and  $((\hat{A}^{(i)}_{j,\ell},i\in[M_{j}]),\ell\in[\xi_{j}])$ are $\xi_{j}$ iid vectors with common joint distribution, with arguments, for each $\ell\in[\xi_{j}],$ $\mathbf{a}_{j,\ell}=(a^{(1)}_{j,\ell},  \ldots,a^{(M_{j})}_{j,\ell}),$
\begin{equation}
\mathsf{S}_{j}(\mathbf{a}_{j,\ell}|\rho_{j})=\frac{\int_{0}^{\infty}\left[\prod_{i=1}^{M_{j}}G_{A_{j}}(da^{(i)}_{j,\ell}|s)\right]\rho_{j}(s)ds}{\psi_{j}(M_{j})}\mathbb{I}_{\{\mathbf{a}_{j,\ell}\notin\mathbf{0}\}}
\label{generaljointtrunc}
\end{equation} 
$(\hat{A}^{(i)}_{j,\ell},i\in[M_{j}])|\tilde{H}_{j,\ell}=s$ has joint distribution $\mathbb{I}_{\{\mathbf{a}_{j}\notin\mathbf{0}\}}\prod_{i=1}^{M_{j}}G_{A_{j}}(da^{
(i)}_{j}|s)/(1-{[1-\pi_{A_{j}}(s)]}^{M_{j}}),$ 
meaning at least one component is non-zero, for  
$\tilde{H}_{j,\ell}$ with density $(1-{[1-\pi_{A_{j}}(s)]}^{M_{j}})\rho_{j}(s)/\psi_{j}(M_{j}).$
\end{lem}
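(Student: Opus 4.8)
The plan is to read the statement directly off the Poisson random measure governing the points $((A^{(i)}_{j,k},i\in[M_j]),s_{j,k},\omega_{j,k})$ given $B_0$, via the restriction and marking theorems for Poisson processes. The first observation is that the processes $Z^{(i)}_j=\sum_k A^{(i)}_{j,k}\delta_{\omega_{j,k}}$ depend only on those points whose mark vector $\mathbf{a}_k=(a^{(i)}_{j,k},i\in[M_j])$ has at least one nonzero coordinate, since a point with $\mathbf{a}_k=\mathbf{0}$ contributes nothing to any of the $M_j$ measures. Hence it suffices to describe the restriction of the governing PRM in~(\ref{PoissonMeanjoint}) to the set $\{\mathbf{a}\notin\mathbf{0}\}$, and the asserted independence across $j\in[J]$ is inherited immediately from the conditional independence, given $B_0$, of the $J$ Poisson random measures.

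By the restriction theorem, the surviving points form, given $B_0$, a PRM with mean intensity $\big[\prod_{i=1}^{M_j}G_{A_j}(da^{(i)}|s)\big]\mathbb{I}_{\{\mathbf{a}\notin\mathbf{0}\}}\rho_j(s)\,ds\,B_0(d\omega)$. I would then compute its total mass by first integrating out the marks: for fixed $s$, $\int_{\mathbf{a}\notin\mathbf{0}}\prod_i G_{A_j}(da^{(i)}|s)=1-[1-\pi_{A_j}(s)]^{M_j}$, since the coordinates are conditionally independent with $\mathbb{P}(A_j=0|s)=1-\pi_{A_j}(s)$. Integrating this against $\rho_j$ returns $\psi_j(M_j)$ by~(\ref{genpsi}), and the remaining $\omega$-integral against $B_0$ contributes $B_0(\Omega)$. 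Since a PRM carries a Poisson-distributed total number of points, this yields $\xi_j\mid B_0\sim\mathrm{Poisson}(\psi_j(M_j)B_0(\Omega))$.

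It remains to identify the law of a surviving point by disintegrating the normalized restricted intensity. Because the $\omega$-variable factors out as $B_0(d\omega)$ independently of $(\mathbf{a},s)$, the atom locations $\tilde\omega_{j,\ell}$ are i.i.d.\ $\bar B_0=B_0/B_0(\Omega)$, independent of the mark vectors. Normalizing the $(\mathbf{a},s)$ part by $\psi_j(M_j)$ and integrating out $s$ gives exactly the vector law $\mathsf{S}_j(\mathbf{a}\mid\rho_j)$ in~(\ref{generaljointtrunc}); alternatively, retaining $s$ as a latent variable $\tilde H_{j,\ell}$ and applying Bayes' rule to the joint density $\prod_i G_{A_j}(da^{(i)}|s)\,\mathbb{I}_{\{\mathbf{a}\notin\mathbf{0}\}}\rho_j(s)/\psi_j(M_j)$ produces the marginal density $(1-[1-\pi_{A_j}(s)]^{M_j})\rho_j(s)/\psi_j(M_j)$ for $\tilde H_{j,\ell}$ together with the conditional mark law $\mathbb{I}_{\{\mathbf{a}\notin\mathbf{0}\}}\prod_i G_{A_j}(da^{(i)}|s)/(1-[1-\pi_{A_j}(s)]^{M_j})$, as claimed.

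The computations are essentially routine Poisson calculus once the thinning is in place; the only point requiring care is to confirm that discarding the all-zero points induces no bias in the finite-dimensional laws of $(Z^{(i)}_j,i\in[M_j])$. I would verify this cleanly by comparing the conditional Laplace functionals $\mathbb{E}[\exp(-\sum_i Z^{(i)}_j(f_i))\mid B_0]$ computed from the full intensity~(\ref{PoissonMeanjoint}) against those of the right-hand side of~(\ref{Jointmarginalpartitionrep}), exactly as in the proof of Proposition~\ref{HIBPsumrep}: points with $\mathbf{a}=\mathbf{0}$ contribute a factor $\exp(0)=1$ and drop out of the exponential functional, so the two functionals coincide and the representation follows.
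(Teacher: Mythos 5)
Your proof is correct, and it takes a genuinely more self-contained route than the paper. The paper's own proof is a one-line appeal to an external result, \cite[Proposition 5.2]{James2017}, obtained by viewing $(Z^{(i)}_{j},i\in[M_{j}])$ as a single vector-valued $\mathrm{IBP}$ with mark distribution $\prod_{i=1}^{M_{j}}G_{A_{j}}(da^{(i)}_{j}|s)$; everything in the statement is then read off from that citation. You instead rederive the needed special case directly from the governing Poisson random measure in~(\ref{PoissonMeanjoint}): restriction to $\{\mathbf{a}\notin\mathbf{0}\}$, computation of the total mass $\psi_{j}(M_{j})B_{0}(\Omega)$ via $\int_{\mathbf{a}\notin\mathbf{0}}\prod_{i}G_{A_{j}}(da^{(i)}|s)=1-[1-\pi_{A_{j}}(s)]^{M_{j}}$ and~(\ref{genpsi}), and disintegration of the normalized intensity to obtain the i.i.d.\ atom locations, the mark law $\mathsf{S}_{j}$, and the latent-jump decomposition via Bayes' rule. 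The independence across $j$ and the finiteness of the restricted mean measure (guaranteed by~(\ref{gamij})) are both handled correctly, and the closing Laplace-functional check that discarding all-zero points is harmless mirrors the verification style of Proposition~\ref{HIBPsumrep}. What your approach buys is transparency and independence from the external reference, essentially reproving the relevant case of the cited multivariate IBP result; what the paper's approach buys is brevity and a uniform treatment, since the same citation is reused for Lemma~\ref{Lemmaconditionaljoint}, Proposition~\ref{multisample}, and Theorem~\ref{thmHIBPmarginalgen}.
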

\begin{proof}
The result follows from a direct application of \cite[Proposition 5.2]{James2017} where we treat 
$(Z^{(i)}_{j},i\in [M_{j}])$ as a single vector valued IBP with $A_{j,0}|s:=(A^{(1)}_{j},\ldots,A^{(M_{j})}_{j})|s$ having distribution $\prod_{i=1}^{M_{j}}G_{A_{j}}(da^{(i)}_{j}|s)$ 
\end{proof}

The next result gives a description of the marginal distributions of $(Z^{(i)}_{j},i\in [M_{j}])|B_{0},$ and the jumps of each $\mu_{j}$ that are paired with the $(\tilde{\omega}_{j,\ell}).$ It is merely a rephrasing of \cite[Proposition 3.1]{James2017}.
\begin{lem}\label{Lemmaconditionaljoint}
Consider the descriptions in Lemma~\ref{LemmaprelimjointB}. Then for each j, given $\xi_{j}=d_{j}$  and $(\tilde{\omega}_{j,\ell}),$ the distribution of  $(Z^{(i)}_{j},i\in [M_{j}])|B_{0},$ can be written as,
\begin{equation}
\left[\prod_{\ell=1}^{d_{j}}\mathsf{S}_{j}(\mathbf{a}_{j,\ell}|\rho_{j})\right]{[\psi_{j}(M_{j})]}^{d_{j}}{\mbox e}^{-\psi_{j}(M_{j})B_{0}(\Omega)}\prod_{\ell=1}^{d_{j}}B_{0}(d\tilde{\omega}_{j,\ell})
\label{margjbiggen}
\end{equation}
Consider the pairs $((S_{j,\ell},\tilde{\omega}_{j,\ell}),\ell \in [d_{j}]),$ where the $(S_{j,\ell})$ are the unobserved jumps of $\mu_{j}$ paired with the selected atoms $(\tilde{\omega}_{j,\ell}).$ Then given $(Z^{(i)}_{j},i\in [M_{j}]),B_{0},$ the  $(S_{j,\ell})$ are conditionally independent where $S_{j,\ell}$ has density $\mathbb{P}(S_{j,\ell}\in ds)/ds$ equivalent to, 
\begin{equation}
\vartheta(s|\mathbf{a}_{j,\ell},\rho_{j})\propto\left[\prod_{i=1}^{M_{j}}G_{A_{j}}(da^{(i)}_{j,\ell}|s)\right]\rho_{j}(s).
\label{jumpdensity1}
\end{equation}
\end{lem}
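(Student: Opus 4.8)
The plan is to recognize that this lemma is essentially an application of the posterior analysis machinery already developed in~\cite{James2017}, applied conditionally on $B_{0}$ to each group $j$ separately (since Lemma~\ref{LemmaprelimjointB} already established conditional independence across $j$). First I would note that the two displays~(\ref{margjbiggen}) and~(\ref{jumpdensity1}) are of fundamentally different nature: the first is a restatement of the marginal law of the observed process $(Z^{(i)}_{j}, i\in[M_{j}])$ given $B_{0}$, while the second describes the conditional law of the latent jumps $(S_{j,\ell})$ given both the data and $B_{0}$. So the proof naturally splits into two parts.

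For the first display, I would start from the representation in Lemma~\ref{LemmaprelimjointB}, namely~(\ref{Jointmarginalpartitionrep}), where $\xi_{j}|B_{0}\sim\mathrm{Poisson}(\psi_{j}(M_{j})B_{0}(\Omega))$ and the $\xi_{j}$ vectors $\mathbf{a}_{j,\ell}$ are iid from $\mathsf{S}_{j}(\cdot|\rho_{j})$ with atoms drawn iid from $\bar{B}_{0}$. Conditioning on $\xi_{j}=d_{j}$ and multiplying the Poisson pmf $[\psi_{j}(M_{j})B_{0}(\Omega)]^{d_{j}}{\mbox e}^{-\psi_{j}(M_{j})B_{0}(\Omega)}/d_{j}!$ by the product $\prod_{\ell=1}^{d_{j}}\mathsf{S}_{j}(\mathbf{a}_{j,\ell}|\rho_{j})$ and by $\prod_{\ell=1}^{d_{j}}\bar{B}_{0}(d\tilde{\omega}_{j,\ell})$, the factors $B_{0}(\Omega)^{d_{j}}$ combine with the normalized measures $\bar{B}_{0}(d\tilde{\omega}_{j,\ell})=B_{0}(d\tilde{\omega}_{j,\ell})/B_{0}(\Omega)$ to yield $\prod_{\ell=1}^{d_{j}}B_{0}(d\tilde{\omega}_{j,\ell})$, and the combinatorial factor $d_{j}!$ is absorbed by viewing the $d_{j}$ selected atoms as an unordered configuration. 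This reproduces~(\ref{margjbiggen}) up to the standard convention on ordering, which is exactly the content of~\cite[Proposition 3.1]{James2017}.

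For the second display, the key observation is that each observed vector $\mathbf{a}_{j,\ell}$ was generated by first drawing a latent jump $S_{j,\ell}$ and then drawing the entries $(\hat{A}^{(i)}_{j,\ell}, i\in[M_{j}])$ conditionally independently from $G_{A_{j}}(\cdot|S_{j,\ell})$. The posterior density of $S_{j,\ell}$ given $\mathbf{a}_{j,\ell}$ is therefore obtained by Bayes' rule: the prior intensity for the jump paired to a selected atom is proportional to $\rho_{j}(s)$, and the likelihood contribution is $\prod_{i=1}^{M_{j}}G_{A_{j}}(da^{(i)}_{j,\ell}|s)$, giving the stated proportionality~(\ref{jumpdensity1}). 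Conditional independence of the $(S_{j,\ell})$ across $\ell$ given the data follows because the joint generation in Lemma~\ref{LemmaprelimjointB} treats the $\ell$-indexed pairs as iid, so the posterior factorizes. The main subtlety to check carefully is the normalization and the restriction $\mathbf{a}_{j,\ell}\notin\mathbf{0}$ appearing in~(\ref{generaljointtrunc}): one must verify that the truncation to configurations with at least one nonzero entry enters both the prior weight on $S_{j,\ell}$ (through the factor $1-[1-\pi_{A_{j}}(s)]^{M_{j}}$ in the density of $\tilde{H}_{j,\ell}$) and the likelihood consistently, so that the truncation factors cancel and leave precisely the clean proportionality in~(\ref{jumpdensity1}) with no residual $(1-[1-\pi_{A_{j}}(s)]^{M_{j}})$ term. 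I expect this cancellation to be the only delicate point, and it is resolved by observing that since the observed $\mathbf{a}_{j,\ell}$ already has a nonzero entry, the indicator $\mathbb{I}_{\{\mathbf{a}_{j,\ell}\notin\mathbf{0}\}}$ is identically one on the support and the conditional density of $S_{j,\ell}$ is simply the renormalization of $[\prod_{i}G_{A_{j}}(da^{(i)}_{j,\ell}|s)]\rho_{j}(s)$ as a function of $s$.
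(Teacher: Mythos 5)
Your proposal is correct and is essentially the argument the paper relies on: the paper gives no explicit proof, stating only that the lemma ``is merely a rephrasing of [Proposition 3.1, James (2017)]'', and your derivation---conditioning on the Poisson count $\xi_{j}=d_{j}$ so that $B_{0}(\Omega)^{d_{j}}$ recombines with $\bar{B}_{0}$ to give $\prod_{\ell}B_{0}(d\tilde{\omega}_{j,\ell})$, then applying Bayes' rule to the latent jumps with the factors $1-[1-\pi_{A_{j}}(s)]^{M_{j}}$ cancelling between the density of $\tilde{H}_{j,\ell}$ and the truncated slab---is precisely the content of that cited result. You correctly identify the cancellation of the truncation factors as the only delicate point, and your resolution of it is right.
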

The next result provides an important representation in terms of compound Poisson random vectors attached to each atom $Y_{k}$ and also highlights the general role of the vectors $(N_{j,k},j\in[J])\overset{d}=(\mathscr{P}_{j,k}(\psi_{j}(M_{j})\tau_{k}),j\in[J]).$

\begin{prop}\label{bigmargBgen}Let $(\psi_{j}(M_{j}),j\in[J])$ be specified as in~(\ref{genpsi}). 
Then the joint distribution of $((Z^{(i)}_{j},i\in [M_{j}]),j\in [J])|B_{0},$ specified in~(\ref{mixedHIBP}), is equivalent to that of the random processes
\begin{equation}
((\sum_{k=1}^{\infty}[\sum_{l=1}^{N_{j,k}}\hat{A}^{(i)}_{j,k,l}]\delta_{Y_{k}},i\in [M_{j}]),j\in [J])
\label{gencompoundPoissonrep}
\end{equation}
where, given $B_{0},$
$
(N_{j,k},j\in[J])\overset{d}=(\mathscr{P}_{j,k}(\psi_{j}(M_{j})\tau_{k}),j\in[J])
$
has the distribution in Proposition~\ref{PropBernoulliPoissonsharing}~eq. (\ref{BerPoisson1pmf}) with parameters specified by~(\ref{genpsi}). Given $N_{j,k}=n_{j,k}>0,$ and each fixed $(j,k),$ the collection of vectors
$((\hat{A}^{(i)}_{j,k,l}, i\in [M_{j}]), l\in [n_{j,k}])$ are $n_{j,k}$ iid vectors with common distribution ~(\ref{generaljointtrunc}).
\end{prop}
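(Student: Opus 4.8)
The plan is to derive~(\ref{gencompoundPoissonrep}) directly from the marked-Poisson representation of Lemma~\ref{LemmaprelimjointB} by exploiting the discreteness of $B_0=\sum_{k=1}^{\infty}\tau_k\delta_{Y_k}$ together with the Poisson coloring (marking) theorem. Recall that Lemma~\ref{LemmaprelimjointB} establishes that, given $B_0$, the collection $((Z^{(i)}_{j},i\in[M_j]),j\in[J])$ is independent across $j$, and that for each $j$ one has $(Z^{(i)}_{j},i\in[M_j])\overset{d}=(\sum_{\ell=1}^{\xi_j}\hat A^{(i)}_{j,\ell}\delta_{\tilde\omega_{j,\ell}},i\in[M_j])$, where $\xi_j\sim\mathrm{Poisson}(\psi_j(M_j)B_0(\Omega))$, the locations $(\tilde\omega_{j,\ell})$ are iid $\bar B_0$, and the mark vectors $(\hat A^{(i)}_{j,\ell},i\in[M_j])$ are iid with law $\mathsf{S}_j(\cdot|\rho_j)$ in~(\ref{generaljointtrunc}), independent of the locations. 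Since the proposition concerns only the conditional law given $B_0$ and Lemma~\ref{LemmaprelimjointB} already supplies independence across $j$, it suffices to fix a single $j$ and then take the product over $j\in[J]$.

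The key step is to color each of the $\xi_j$ points by the identity of the atom of $B_0$ at which it lands. Because $\bar B_0(d\omega)=B_0(d\omega)/B_0(\Omega)=\sum_{k=1}^{\infty}(\tau_k/B_0(\Omega))\delta_{Y_k}$ is purely atomic, each location $\tilde\omega_{j,\ell}$ equals $Y_k$ with probability $\tau_k/B_0(\Omega)$, independently over $\ell$. Writing $N_{j,k}$ for the number of the $\xi_j$ points falling on $Y_k$, the Poisson coloring theorem gives that, given $B_0$, the $(N_{j,k})_{k\ge 1}$ are independent with $N_{j,k}\sim\mathrm{Poisson}(\psi_j(M_j)B_0(\Omega)\cdot \tau_k/B_0(\Omega))=\mathrm{Poisson}(\psi_j(M_j)\tau_k)$, i.e.\ $N_{j,k}\overset{d}=\mathscr{P}_{j,k}(\psi_j(M_j)\tau_k)$. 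Because the marks are drawn independently of the locations, conditionally on $N_{j,k}=n_{j,k}$ the mark vectors attached to the points landing at $Y_k$, relabeled as $((\hat A^{(i)}_{j,k,l},i\in[M_j]),l\in[n_{j,k}])$, remain $n_{j,k}$ iid draws from $\mathsf{S}_j(\cdot|\rho_j)$, and these families are independent across $k$.

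Regrouping the sum $\sum_{\ell=1}^{\xi_j}\hat A^{(i)}_{j,\ell}\delta_{\tilde\omega_{j,\ell}}$ according to color then yields, for each $i$, the process $\sum_{k=1}^{\infty}[\sum_{l=1}^{N_{j,k}}\hat A^{(i)}_{j,k,l}]\delta_{Y_k}$, which is precisely~(\ref{gencompoundPoissonrep}). Taking the product over $j\in[J]$ using the cross-$j$ independence from Lemma~\ref{LemmaprelimjointB} shows that $(N_{j,k},j\in[J])$ is a vector of independent $\mathrm{Poisson}(\psi_j(M_j)\tau_k)$ variables; this is exactly the feature-selection count law of Proposition~\ref{PropBernoulliPoissonsharing}, eq.~(\ref{BerPoisson1pmf}), the total $N_{j,k}$ being the Poisson$(\psi_j(M_j)\tau_k)$ factor there, now specialized through the common Bernoulli mechanism $b_j=\mathbb{I}_{\{A_j\neq 0\}}$ with $\psi_j(M_j)$ given by~(\ref{genpsi}) (cf.\ Remark~\ref{genAbernoulli}).

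I anticipate the main obstacle to be the clean bookkeeping in the coloring step: one must verify that grouping the iid mark vectors by the atom at which they land preserves both their independence across atoms and their common conditional law $\mathsf{S}_j$, rather than inadvertently introducing dependence through the random total $\xi_j$. This is handled precisely by the marking theorem, which guarantees that conditioning on the color-counts $(N_{j,k})$ decouples the original $\mathrm{Poisson}(\psi_j(M_j)B_0(\Omega))$ process into independent Poisson pieces whose marks are unchanged; the constraint that each retained vector has at least one nonzero entry is already encoded in $\mathsf{S}_j$ via the indicator $\mathbb{I}_{\{\mathbf{a}_{j,\ell}\notin\mathbf{0}\}}$ and hence carries over verbatim.
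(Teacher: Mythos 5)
Your argument is correct and follows essentially the same route as the paper: both start from the compound-Poisson representation of Lemma~\ref{LemmaprelimjointB} and decompose it over the atoms of the discrete $B_{0}$, the paper verifying the resulting split into independent $\mathrm{Poisson}(\psi_{j}(M_{j})\tau_{k})$ counts by the Laplace-functional computation in the proof of Proposition~\ref{PropBernoulliPoissonsharing}, where you invoke the Poisson coloring/marking theorem. Your explicit bookkeeping of the marks --- that regrouping by atom preserves their iid $\mathsf{S}_{j}$ law and independence across atoms, with the nonzero-entry constraint already encoded in $\mathsf{S}_{j}$ --- is precisely the detail the paper leaves implicit.
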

\begin{proof}The result follows from Lemma~\ref{LemmaprelimjointB}, and arguments similar to the proof of Proposition~\ref{PropBernoulliPoissonsharing}.
\end{proof}
We now present a rephrasing of the results in 
Lemma~\ref{LemmaprelimjointB} and Proposition~\ref{bigmargBgen}, which shows that $((Z^{(i)}_{j},i\in [M_{j}]),j\in[J])|B_{0}$ are conditionally independent multivariate IBP with vector valued masses corresponding to compound Poisson variables, hence infinitely divisible variables, with positive mass at zero, regardless of the choice of $A_{j},$ attached to each possible dish/feature $(Y_{k})$
\begin{cor}\label{corjointB} Lemma~\ref{LemmaprelimjointB}, (\ref{Jointmarginalpartitionrep}), shows that 
$((Z^{(i)}_{j},i\in [M_{j}]),j\in[J])|B_{0}$
are, using notation in~\cite[Proposition 5.2]{James2017}, conditionally independent vector valued $\mathrm{IBP}((A^{(1)}_{j},\ldots,A_{j}^{(M_{j})}),\rho_{j}B_{0}),$ across $j\in[J].$
Proposition~\ref{bigmargBgen} shows that given $B_{0}=\sum_{k=1}^{\infty}\tau_{k}\delta_{Y_{k}},$ they are equivalently multivariate IBP in the sense of~\cite[Section 5]{James2017} consisting of  $J$ conditionally independent processes, corresponding to $J$ customers, with distributions
\begin{equation}
(Z^{(i)}_{j},i\in [M_{j}])|B_{0}\overset{ind}\sim
\mathrm{IBP}((\sum_{l=1}^{N_{j}}\hat{A}^{(1)}_{j,l},\ldots,\sum_{l=1}^{N_{j}}\hat{A}^{(M_{j})}_{j,l})|B_{0})
\end{equation}
where $(N_{j},j\in [J])|\lambda \overset{d}=(\mathscr{P}_{j}(\psi_{j}(M_{j})\lambda),j\in[J]).$ 
$(\hat{A}^{(1)}_{j,l},\ldots,\hat{A}^{(M_{j}}_{j,l})$
 are independent of this, and for each fixed $j,$ are iid vectors across $l,$ with common distribution $\mathsf{S}_{j}(\mathbf{a}_{j,l}|\rho_{j})$ in~(\ref{generaljointtrunc}), not depending on $\lambda,$ and hence $B_{0}.$

\end{cor}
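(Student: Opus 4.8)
The plan is to treat this corollary as a translation of the two preceding results into the multivariate IBP vocabulary of \cite[Section 5]{James2017}, since all of the probabilistic content has already been established in Lemma~\ref{LemmaprelimjointB} and Proposition~\ref{bigmargBgen}. First I would recall the characterization of a vector-valued $\mathrm{IBP}((A^{(1)},\ldots,A^{(M)}),\rho B)$ given in \cite[Proposition 5.2]{James2017}: such a process places a Poisson number of atoms drawn from the normalized base measure $\bar{B}$, with total intensity governed by the integrated L\'evy density, and attaches to each selected atom an iid vector mark drawn from the product spike-and-slab law conditioned to be non-zero. The whole task then reduces to checking that the two representations in the corollary instantiate this template with the correct base measure, L\'evy density, and mark law.

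For the first assertion I would read off (\ref{Jointmarginalpartitionrep}) directly. For each fixed $j$, the number of selected atoms is $\xi_j\sim\mathrm{Poisson}(\psi_j(M_j)B_0(\Omega))$, the atoms are iid $\bar{B}_0$, and the attached vectors $(\hat{A}^{(i)}_{j,\ell},i\in[M_j])$ are iid with law $\mathsf{S}_j(\mathbf{a}_{j,\ell}|\rho_j)$ in (\ref{generaljointtrunc}), which is precisely the product $\prod_i G_{A_j}(da^{(i)}_j|s)$ integrated against $\rho_j$, normalized, and restricted to non-zero vectors. Matching $\psi_j(M_j)$ with the integrated L\'evy density (\ref{genpsi}) and this mark law with the truncated product spike-and-slab distribution identifies the process as $\mathrm{IBP}((A^{(1)}_j,\ldots,A^{(M_j)}_j),\rho_jB_0)$ in the sense of \cite[Proposition 5.2]{James2017}; conditional independence across $j$ is already part of the conclusion of Lemma~\ref{LemmaprelimjointB}.

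For the second assertion I would invoke the dual reading supplied by Proposition~\ref{bigmargBgen}. Here the atoms $(Y_k)$ of the discrete $B_0$ play the role of the \emph{dishes}, and the $J$ groups play the role of $J$ \emph{customers}: at dish $Y_k$, customer $j$ carries the compound sum $(\sum_{l=1}^{N_{j,k}}\hat{A}^{(i)}_{j,k,l},i\in[M_j])$ from (\ref{gencompoundPoissonrep}). The selection counts satisfy $(N_{j,k},j\in[J])\overset{d}=(\mathscr{P}_{j,k}(\psi_j(M_j)\tau_k),j\in[J])$ by Proposition~\ref{PropBernoulliPoissonsharing}, so conditioning on the underlying jump $\lambda$ of the CRM driving $B_0$ yields $(N_j,j\in[J])\overset{d}=(\mathscr{P}_j(\psi_j(M_j)\lambda),j\in[J])$, which is exactly the Poisson selection mechanism of a multivariate IBP driven by the L\'evy density $\tau_0$ on $(0,\infty)$. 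The marks $(\hat{A}^{(1)}_{j,l},\ldots,\hat{A}^{(M_j)}_{j,l})$ are iid across $l$ from $\mathsf{S}_j$ and, crucially, do not depend on $\lambda$; recognizing this independence is what lets the compound structure be packaged as the iid vector marks of a genuine multivariate IBP, so the representation coincides with the form asserted in the corollary.

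The main obstacle will not be any new calculation but the careful bookkeeping of this dual identification. In particular, one must verify that the mark law $\mathsf{S}_j$ is genuinely free of the Poisson intensity $\lambda=\tau_k$, since otherwise the compound sums would be marks with intensity-dependent law and would fall outside the multivariate IBP template of \cite[Section 5]{James2017}. This factorization is exactly what Proposition~\ref{bigmargBgen} and the Fisher--Soper Poissonization underlying Proposition~\ref{PropBernoulliPoissonsharing} deliver, so the corollary follows once the parameters $(\psi_j(M_j),\rho_j,\bar{B}_0,\mathsf{S}_j)$ are matched to the definition on both readings.
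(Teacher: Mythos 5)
Your proposal is correct and follows essentially the same route as the paper, which presents this corollary as a direct rephrasing of Lemma~\ref{LemmaprelimjointB} and Proposition~\ref{bigmargBgen} in the multivariate IBP vocabulary of \cite[Section 5]{James2017}, with no additional argument needed beyond matching the Poisson selection intensities $\psi_{j}(M_{j})$, the base measures, and the mark law $\mathsf{S}_{j}$ to the template of \cite[Proposition 5.2]{James2017}. Your emphasis on the fact that $\mathsf{S}_{j}$ is free of $\lambda$ (and hence of $B_{0}$) is exactly the point the paper highlights in the final clause of the corollary.
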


\section{Distributions of $((Z^{(i)}_{j},i\in [M_{j}]),j\in [J])$}
We now focus on representations of the unconditioned process which will aid in practically sampling these processes. First, we obtain some properties of the generalized Poisson IBP process, based on $(N_{j,k},j\in[J])\overset{d}=(\mathscr{P}_{j,k}(\psi_{j}(M_{j})\tau_{k}),j\in[J]),$ that appears in [Proposition~\ref{PropBernoulliPoissonsharing}, (i)],Proposition~\ref{bigmargBgen}, and Corollary~\ref{corjointB}. That is to say, in every mixed spike and slab HIBP regardless of choice of $A_{j}.$

\begin{prop}\label{multisample}
 Given $B_{0}=\sum_{k=1}^{\infty}\tau_{k}\delta_{Y_{k}}\sim CRM(\tau_{0},G_{0}),$ consider the random process
\begin{equation}
(\sum_{k=1}^{\infty}N_{j,k}\delta_{Y_{k}},j\in [J])\overset{d}=(\sum_{k=1}^{\infty}\mathscr{P}_{j,k}(\psi_{j}(M_{j})\tau_{k})\delta_{Y_{k}},j\in [J]),
\label{Poissonsharing}
\end{equation}
 with parameters specified as in~(\ref{genpsi}). Let $\xi(\varphi)$ be a $\mathrm{Poisson}(\varphi)$ variable with 
$\varphi:=G_{0}(\Omega)\tilde{\psi}_{0}(\sum_{j=1}^{J}\psi_{j}(M_{j}))$ then the unconditional distribution of the processes in~(\ref{Poissonsharing}) can be expressed as
\begin{equation}
(\sum_{k=1}^{\infty}N_{j,k}\delta_{Y_{k}},j\in [J])\overset{d}=(\sum_{k=1}^{\xi(\varphi)}X_{j,k}\delta_{\tilde{Y}_{k}},j\in[J])
\end{equation}
where $(\tilde{Y}_{k})$ are iid $\bar{G}_{0},$ $X_{0,k}:=(X_{1,k},\ldots X_{J,k})$ are iid across $k,$ and independent of $\xi(\varphi),$ and there are iid pairs $((X_{0,k},H_{k}))$ satisfying the following properties, 
\begin{enumerate}
\item[(i)]$X_{0,k}|H_{k}=\lambda$ has the joint probability mass function 
\begin{equation}
\frac{n_{k}!}{\prod_{j=1}^{J}n_{j,k}!}
\prod_{j=1}^{J}\pi^{n_{j,k}}_{j}
\times\frac{{([\sum_{j=1}^{J}\psi_{j}(M_{j})]\lambda)}^{n_{k}}{\mbox e}^{-[\sum_{j=1}^{J}\psi_{j}(M_{j})]\lambda}}{n_{k}!(1-{\mbox e}^{-\lambda[\sum_{j=1}\psi_{j}(M_{j})]})}
\label{truncatedMPoisson}
\end{equation}
for $\sum_{j=1}^{J}n_{j,k}=n_{k},$ and $n_{k}>0,$ and $\pi_{j}=\psi_{j}(M_{j})/\sum_{l=1}^{J}\psi_{l}(M_{l}),$
and 
\item[(ii)]$\mathbb{P}(H_{k}\in d\lambda)/d\lambda=(1-{\mbox e}^{-\lambda[\sum_{j=1}\psi_{j}(M_{j})]})\tau_{0}(\lambda)/\tilde{\psi}_{0}(\sum_{j=1}^{J}\psi_{j}(M_{j})).$
\item[(iii)]$X_{0,k}|\sum_{j=1}^{J}X_{j,k}=n_{k}$ has a Multinomial distribution with parameters $(n_{k},\pi_{1},\ldots,\pi_{J})$
\item[(iv)]$\tilde{X}_{k}:=\sum_{j=1}^{J}X_{j,k},$ has the distribution for $n_{k}=1,2,\ldots,$
\begin{equation}
\tilde{p}(n_{k}|\kappa,\tau_{0})=\frac{\kappa^{n_{k}}\int_{0}^{\infty}\lambda^{n_{k}}
{\mbox e}^{-\lambda\kappa}\tau_{0}(\lambda)d\lambda}{n_{k}!\tilde{\psi}_{0}(\kappa)}
\label{mixedtruncatedPoissonpmf}
\end{equation}
for $\kappa=\sum_{j=1}^{J}\psi_{j}(M_{j}).$
\item[(v)]Denote the distribution of $X_{0,k}$ as $\mathrm{MtP}((\psi_{j}(M_{j}),j\in [J]),\tau_{0}),$ and of $\tilde{X}_{k}$ as $\mathrm{MtP}(\sum_{j=1}^{J}\psi_{j}(M_{j}),\tau_{0}).$
\end{enumerate}
\end{prop}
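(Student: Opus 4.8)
The plan is to recognize the left-hand side of~(\ref{Poissonsharing}) as a deterministic functional of a single marked Poisson random measure and then read off its unconditional law by the Poisson mapping and marking theorems. Writing $\kappa=\sum_{j=1}^{J}\psi_{j}(M_{j})$, recall that the pairs $(\tau_{k},Y_{k})$ are the points of a $\mathrm{PRM}$ with mean intensity $\tau_{0}(\lambda)\,d\lambda\,G_{0}(dy)$, and that, given $B_{0}$, the vector $(N_{1,k},\ldots,N_{J,k})$ attached to $(\tau_{k},Y_{k})$ consists of independent $\mathrm{Poisson}(\psi_{j}(M_{j})\tau_{k})$ variables, independently across $k$. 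By the marking theorem the augmented collection $\{(\tau_{k},Y_{k},(N_{j,k})_{j\in[J]})\}$ is itself a $\mathrm{PRM}$ on $(0,\infty)\times\Omega\times\mathbb{N}_{0}^{J}$, and projecting out the coordinate $\tau_{k}$ (the mapping theorem) yields a $\mathrm{PRM}$ on $\Omega\times\mathbb{N}_{0}^{J}$ with mean intensity
\begin{equation*}
\left[\int_{0}^{\infty}\prod_{j=1}^{J}\frac{(\psi_{j}(M_{j})\lambda)^{n_{j}}\mathrm{e}^{-\psi_{j}(M_{j})\lambda}}{n_{j}!}\,\tau_{0}(\lambda)\,d\lambda\right]G_{0}(dy).
\end{equation*}

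Second, I would discard the atoms carrying the null mark $(n_{1},\ldots,n_{J})=\mathbf{0}$, since these contribute nothing to any of the $J$ measures in~(\ref{Poissonsharing}); this is a deterministic thinning of the $\mathrm{PRM}$. Summing the displayed intensity over all $(n_{j})_{j}\in\mathbb{N}_{0}^{J}$ gives an infinite total mass, but the null mark alone accounts for $\int_{0}^{\infty}\mathrm{e}^{-\kappa\lambda}\tau_{0}(\lambda)\,d\lambda$ of it, so the surviving intensity over $\Omega\times(\mathbb{N}_{0}^{J}\setminus\{\mathbf{0}\})$ has finite total mass $G_{0}(\Omega)\int_{0}^{\infty}(1-\mathrm{e}^{-\kappa\lambda})\tau_{0}(\lambda)\,d\lambda=G_{0}(\Omega)\tilde{\psi}_{0}(\kappa)=\varphi$, by~(\ref{BLaplaceexponent}). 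A finite-mass $\mathrm{PRM}$ is exactly a $\mathrm{Poisson}(\varphi)$ number $\xi(\varphi)$ of iid points drawn from the normalized intensity, and because that intensity factorizes into its $\Omega$-part and its $\mathbb{N}_{0}^{J}$-part, the locations are iid $\bar{G}_{0}$ and independent of the iid mark vectors $X_{0,k}$, whose pmf is the bracketed integral above divided by $\tilde{\psi}_{0}(\kappa)$. This establishes the main displayed identity together with the joint law of $X_{0,k}$.

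Finally, to obtain the stated representations (i)--(v) I would disintegrate this mark law through the auxiliary variable $H_{k}$. Declaring $H_{k}$ to have the density in (ii) --- which is a probability density precisely because $\int_{0}^{\infty}(1-\mathrm{e}^{-\kappa\lambda})\tau_{0}(\lambda)\,d\lambda=\tilde{\psi}_{0}(\kappa)$ --- and $X_{0,k}\mid H_{k}=\lambda$ the zero-truncated multivariate Poisson pmf in~(\ref{truncatedMPoisson}), the factor $(1-\mathrm{e}^{-\kappa\lambda})$ cancels on forming the joint density, and integrating out $\lambda$ reproduces exactly the mark pmf of the previous step; this is the verification that (i) and (ii) are consistent with the marginal law of $X_{0,k}$. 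Property (iii) is immediate, since for fixed $n_{k}=\sum_{j}n_{j,k}$ the pmf~(\ref{truncatedMPoisson}) carries the $\mathrm{Multinomial}(n_{k},(\pi_{j}))$ factor multiplied by a term depending on $n_{k}$ alone, so the $\mathrm{Multinomial}$ conditional survives integration over $H_{k}$; this is the Fisher--Soper Poisson/Multinomial relationship already noted after Proposition~\ref{PropBernoulliPoissonsharing}. Property (iv) then follows by summing over the simplex $\{\sum_{j}n_{j,k}=n_{k}\}$: conditionally on $H_{k}=\lambda$ the total $\tilde{X}_{k}$ is zero-truncated $\mathrm{Poisson}(\kappa\lambda)$, and integrating against the density in (ii) cancels the truncation factor and yields~(\ref{mixedtruncatedPoissonpmf}); (v) is only notation. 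The one point demanding care --- and the natural main obstacle --- is the thinning/finite-mass step: one must confirm that excising the null mark turns an a priori $\sigma$-finite but infinite $\mathrm{PRM}$ intensity into a genuinely finite one, so that the ``$\mathrm{Poisson}(\varphi)$ number of iid points'' description is legitimate; the convergence guaranteeing $\tilde{\psi}_{0}(\kappa)<\infty$ is what makes this work and is inherited from the assumption that $\tau_{0}$ is a L\'evy density.
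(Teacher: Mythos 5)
Your argument is correct and reaches the same spike--slab decomposition as the paper, but by a genuinely different route. The paper's proof is essentially a one-line appeal to \cite[Proposition 5.2]{James2017}: it views $(\sum_{k}N_{j,k}\delta_{Y_{k}},j\in[J])$ as a single multivariate IBP whose vector mass $A_{0,k}=(N_{1,k},\ldots,N_{J,k})|\tau_{k}$ has the product-Poisson law in~(\ref{mixedPoisson2pmf}), identifies the spike $1-\pi_{A_{0}}(\lambda)={\mbox e}^{-\lambda\kappa}$ and the zero-truncated product-Poisson slab~(\ref{truncatedMPoisson}), and lets the cited result deliver the compound Poisson representation. You instead rebuild that result from first principles via the marking, mapping and restriction theorems for Poisson random measures, which makes the proof self-contained and makes transparent exactly where $\varphi=G_{0}(\Omega)\tilde{\psi}_{0}(\kappa)$ and the factorization of the normalized intensity into $\bar{G}_{0}$ and the mark pmf come from; the verification of (i)--(v) by disintegration through $H_{k}$, the Fisher--Soper multinomial conditioning, and the cancellation of the truncation factor are all correct and match the structure the paper relies on implicitly. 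One technical point should be reordered for full rigor: you project out $\lambda$ \emph{before} discarding the null mark, but for infinite-activity $\tau_{0}$ (e.g.\ the generalized gamma case with $0<\alpha<1$) the projected intensity assigns mass $G_{0}(A)\int_{0}^{\infty}{\mbox e}^{-\kappa\lambda}\tau_{0}(\lambda)d\lambda=\infty$ to every $A\times\{\mathbf{0}\}$ with $G_{0}(A)>0$, so the intermediate object is not a $\sigma$-finite $\mathrm{PRM}$ on $\Omega\times\mathbb{N}_{0}^{J}$ and the mapping theorem does not apply in that form. Restricting the marked $\mathrm{PRM}$ on $(0,\infty)\times\Omega\times\mathbb{N}_{0}^{J}$ to $\{\mathbf{n}\neq\mathbf{0}\}$ first --- which is a legitimate restriction and already has finite total intensity $\varphi$ --- and only then projecting out $\lambda$ repairs this with no other change to your argument; you correctly isolate $\tilde{\psi}_{0}(\kappa)<\infty$ as the fact that makes the finite-mass step work.
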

\begin{proof}
The result follows from an application of \cite[Proposition 5.2]{James2017} where we view the processes (\ref{Poissonsharing}) as a single  multivariate IBP, with 
$A_{0,k}:=(N_{1,k},\ldots,N_{J,k})|\tau_{k}$  having distribution as in (\ref{mixedPoisson2pmf}). It then follows that the spike is $1-\pi_{A_{0}}(\lambda)=\mathbb{P}((N_{j,k}=0,j\in [J])|\tau_{k}=\lambda)={\mbox e}^{-\lambda[\sum_{j=1}^{J}\psi_{j}(M_{j})]}$ and the slab corresponds to the zero truncated product of independent Poisson variables in~(\ref{truncatedMPoisson}).
\end{proof}
\begin{rem}Proposition~\ref{multisample} is a multivariate extension of the result for the Poisson IBP in~\cite{Titsias}, \cite[Section 4.2]{James2017} or in~\cite[Theorem 1]{ZhouFoF} in relation to frequency of frequency distributions(FoF). As noted in \cite[Remark 4.1]{James2017} the infinite sequences $((\tilde{X_{k}},H_{k}))$ arise much earlier in~\cite{Pit97}.
\end{rem}
As indicated in Proposition~\ref{multisample}, in order to sample  $X_{0,k}\sim\mathrm{MtP}((\psi_{j}(M_{j}),j\in [J]),\tau_{0}),$ it suffices to sample $\tilde{X}_{k}=n_{k}$ from $\mathrm{MtP}(\sum_{j=1}^{J}\psi_{j}(M_{j}),\tau_{0}),$  and then a Multinomial distribution. We next describe distributions of $\tilde{X}_{k},$ in the generalized gamma case that has appeared elsewhere in the literature. 
\subsubsection{The generalized gamma case}
\label{Gengamma}
Now, as in ~\cite[Section 4.2.1, eq. (4.5)]{James2017} and \cite[Section 3]{ZhouFoF} we give the calculations for the distribution of $\tilde{X}_{k}$  when $B_{0}$ is $\mathrm{GG}(\alpha,\zeta;\theta_{0}G_{0}),$ specified by
(\ref{GGLevy}).
In this case for $m=1,2,\ldots,$ we can set 
$\tilde{p}(m|\kappa,\tau_{0}),$ defined in~(\ref{mixedtruncatedPoissonpmf}), equal to $p_{\alpha}(m|\kappa,\zeta)$ where
\begin{equation}
\label{Xfunction}
p_{\alpha}(m|\kappa,\zeta)=\left\{\begin{array}{ll}
\frac{\kappa^{m}{(\kappa+\zeta)}^{\alpha-m}}
{[{(\kappa+\zeta)}^{\alpha}-{\zeta}^{\alpha}]}\frac{\alpha \Gamma(m-\alpha)}{m!\Gamma(1-\alpha)}, &   {\mbox{ if }} 0<\alpha<1, \zeta\ge 0\\ 
\frac{\kappa^{m}{(\kappa+\zeta)}^{-m}}
{m\log(1+\kappa/\zeta)},& {\mbox{ if }} \alpha=0, \zeta>0\\
\frac{\kappa^{m}{(
\kappa+\zeta)}^{-(\delta+m)}}
{[{\zeta}^{-\delta}-{(\kappa+\zeta)}^{-\delta}]}\frac{\Gamma(m+\delta)}{m!\Gamma(\delta)}, &   {\mbox{ if }} \alpha=-\delta <0,\zeta>0\\
                     \end{array}\right.
\end{equation}
Hence in this case $\tilde{X}_{k}\sim\mathrm{MtP}(\sum_{j=1}^{J}\psi_{j}(M_{j}),\tau_{0}),$ has pmf $p_{\alpha}(m|\sum_{j=1}^{J}\psi_{j}(M_{j}),\zeta).$ In addition $\tilde{\psi}_{0}(\sum_{j=1}^{J}\psi_{j}(M_{j}))=\theta_{0}\tilde{\psi}_{\alpha,\zeta}(\sum_{j=1}^{J}\psi_{j}(M_{j}))),$ as in (\ref{rfunction}). 
Setting $\alpha=0,$ gives the Logarithmic series distribution~\cite{Fisher} and $\sum_{k=1}^{\xi(\varphi)}\tilde{X}_{k}$ coincides with the compound Poisson representation of a Negative Binomial distribution due to~\cite{Quenouille}.  This fact plays a key role in the practical implementation of Negative Binomial random count matrices and related quantities in the works of~\cite{ZhouCarin2015,Zhou1, ZhouPadilla2016}. The choice of $\zeta=0,$ equates to $\tilde{X_{k}}$ having Sibuya's distribution and gives the compound Poisson representation of a discrete-Stable variable, see~\cite{DevroyeTryp}. See \cite[Sections 3,4,5]{ZhouFoF} for applications of the general case in~(\ref{Xfunction}) within the FoF context.
\subsection{Results for sampling $((Z^{(i)}_{j},i\in [M_{j}]),j\in [J])$}
Next we describe one of our primary results which has implications for sampling and practical implementation. 
\begin{thm}\label{thmHIBPmarginalgen}
Let $((X_{0,k},\tilde{Y}_{k}),k\in [\xi(\varphi)]),\xi(\varphi))$ be as described in Proposition~\ref{multisample}. The marginal distribution of $((Z^{(i)}_{j},i\in [M_{j}]),j\in [J]),$ specified in~(\ref{mixedHIBP}), satisfies
$$
((Z^{(i)}_{j},i\in [M_{j}]),j\in [J])\overset{d}=((\sum_{k=1}^{\xi(\varphi)}[\sum_{l=1}^{X_{j,k}}\hat{A}^{(i)}_{j,k,l}]\delta_{\tilde{Y}_{k}},i\in [M_{j}]),j\in [J]),
$$
where $(\hat{A}^{(i)}_{j,k,l})$ are otherwise as specified in Proposition~\ref{bigmargBgen} and (\ref{generaljointtrunc}). Hence
\begin{equation}
(\sum_{i=1}^{M_{j}}Z^{(i)}_{j},j\in [J])\overset{d}=(\sum_{k=1}^{\xi(\varphi)}[\sum_{l=1}^{X_{j,k}}[\sum_{i=1}^{M_{j}}\hat{A}^{(i)}_{j,k,l}]]\delta_{\tilde{Y}_{k}},j\in [J])
\label{sumprocess}
\end{equation}
\end{thm}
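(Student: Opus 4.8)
The plan is to obtain the unconditional law by composing the two preceding results: Proposition~\ref{bigmargBgen}, which fixes the conditional law of the HIBP given $B_{0}$, and Proposition~\ref{multisample}, which supplies the unconditional law of the underlying vector-valued Poisson sharing process. The crucial structural observation is that, in the representation~(\ref{gencompoundPoissonrep}) of Proposition~\ref{bigmargBgen}, the process $((Z^{(i)}_{j},i\in [M_{j}]),j\in [J])$ depends on $B_{0}$ \emph{only} through the Poisson sharing process $(\sum_{k}N_{j,k}\delta_{Y_{k}},j\in[J])$ with $(N_{j,k},j\in[J])\overset{d}=(\mathscr{P}_{j,k}(\psi_{j}(M_{j})\tau_{k}),j\in[J])$. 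Indeed, the attached marks $((\hat{A}^{(i)}_{j,k,l},i\in[M_{j}]),l\in[N_{j,k}])$ are, by Proposition~\ref{bigmargBgen}, conditionally iid vectors drawn from~(\ref{generaljointtrunc}), a law that involves only $\rho_{j}$ and $G_{A_{j}}$ and is therefore free of $B_{0}$.

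First I would make this precise by introducing the deterministic, measurable functional $\Phi$ on the space of (vector-valued) random measures that attaches to each atom $Y_{k}$ carrying counts $(N_{j,k},j\in[J])$ an independent block of iid marks from~(\ref{generaljointtrunc}) and forms the inner sums $\sum_{l=1}^{N_{j,k}}\hat{A}^{(i)}_{j,k,l}$. Proposition~\ref{bigmargBgen} asserts exactly that, given $B_{0}$, the HIBP equals in distribution $\Phi$ applied to the Poisson sharing process. Since neither $\Phi$ nor the mark law depends on $B_{0}$, and the marks are independent of $(B_{0},\,\text{Poisson sharing process})$, the conditional law of the HIBP given the Poisson sharing process is itself $B_{0}$-free; equivalently, the HIBP is conditionally independent of $B_{0}$ given $(\sum_{k}N_{j,k}\delta_{Y_{k}},j\in[J])$. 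By the tower property, marginalizing over $B_{0}$ therefore amounts to replacing the conditional law of the Poisson sharing process by its unconditional law and then applying the same measurable map $\Phi$, so that equality in distribution of the inputs transfers to equality in distribution of the outputs.

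Next I would invoke Proposition~\ref{multisample} for that unconditional law, namely $(\sum_{k}N_{j,k}\delta_{Y_{k}},j\in[J])\overset{d}=(\sum_{k=1}^{\xi(\varphi)}X_{j,k}\delta_{\tilde{Y}_{k}},j\in[J])$ with $\xi(\varphi)\sim\mathrm{Poisson}(\varphi)$ and $X_{0,k}$ iid from the zero-truncated law~(\ref{truncatedMPoisson}). Substituting this representation into $\Phi$ produces precisely $((\sum_{k=1}^{\xi(\varphi)}[\sum_{l=1}^{X_{j,k}}\hat{A}^{(i)}_{j,k,l}]\delta_{\tilde{Y}_{k}},i\in[M_{j}]),j\in[J])$, the asserted identity. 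One should note the bookkeeping of vanishing atoms: in~(\ref{gencompoundPoissonrep}) any atom $Y_{k}$ with $N_{j,k}=0$ for every $j$ contributes only empty inner sums and may be discarded, which is exactly the zero-truncation ($\tilde{X}_{k}=\sum_{j}X_{j,k}>0$) already built into Proposition~\ref{multisample}; this reduces the countable collection of atoms of $B_{0}$ to the $\xi(\varphi)$ retained atoms $(\tilde{Y}_{k})$ with locations iid $\bar{G}_{0}$. The concluding identity~(\ref{sumprocess}) then follows immediately by summing the innermost expression over $i\in[M_{j}]$ on both sides.

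I expect the only delicate point to be the rigorous justification that the attachment of independent marks commutes with the marginalization over $B_{0}$, i.e.\ the disintegration argument establishing conditional independence of the HIBP from $B_{0}$ given the Poisson sharing process. Once the mark law~(\ref{generaljointtrunc}) is recognized as $B_{0}$-free and independent of the driving Poisson randomness, this reduces to a routine application of the tower property; nonetheless it is the step that legitimizes the substitution of Proposition~\ref{multisample}, and it is worth stating the measurability of $\Phi$ explicitly so that the equality in distribution of the two Poisson sharing processes propagates to the full HIBP.
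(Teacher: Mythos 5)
Your proposal is correct and follows essentially the same route as the paper: both proofs rest on the observations that the mark law~(\ref{generaljointtrunc}) is free of $B_{0}$ and that the compound-Poisson vector at an atom vanishes exactly when $N_{j,k}=0$ for all $j\in[J]$, and both then reduce the claim to Proposition~\ref{multisample}. The paper compresses this into a direct appeal to \cite[Proposition 5.2]{James2017} applied to the spike-and-slab decomposition of the vectors in Proposition~\ref{bigmargBgen}, whereas you spell out the same reduction via the marking-commutes-with-marginalization (tower property) argument; the substance is identical.
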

\begin{proof}
This result can also be deduced as a special case of \cite[Proposition 5.2]{James2017}, however there are some details to note. The key is to identify the spike and slab distributional decomposition of the components in Proposition~\ref{bigmargBgen}. The result follows by noting for each $k,$ the vector
$
((\sum_{l=1}^{N_{j,k}}\hat{A}^{(i)}_{j,k,l}, i\in [M_{j}]), j\in [J])
$
has all components equal to zero if and only if $N_{j,k}=0$ for $j\in [J].$ The result then follows from the details in the proof of Proposition~\ref{multisample}.
 \end{proof}

\subsection{Marginal distributions and multi-group ECPFs}
Using slightly different notation than in \cite[Corollary 2]{ZhouFoF}, including an extra parameter $\kappa>0,$  for $\gamma_{0}:=G_{0}(\Omega),$ a general L\'evy density $\tau_{0}$, and positive cell counts $\mathbf{n}_{r}:=(n_{1},\ldots,n_{r})$, where $\sum_{k=1}^{r}n_{k}=n,$ define the function
\begin{equation}
\varrho(\mathbf{n}_{r};n|\gamma_{0},\tau_{0},\kappa)=\frac{\gamma^{r}_{0}}{n!}{\mbox e}^{-\gamma_{0}\int_{0}^{\infty}(1-{\mbox e}^{-\lambda \kappa})\tau_{0}(\lambda)d\lambda}
\prod_{k=1}^{r}\varpi(n_{k}|\kappa,\tau_{0}),
\label{ECPF}
\end{equation}
where 
$
\varpi(n_{k}|\kappa,\tau_{0})=n_{k}!\tilde{\psi}_{0}(\kappa)\tilde{p}(n_{k}|\kappa,\tau_{0})=\int_{0}^{\infty}\kappa^{n_{k}}\lambda^{n_{k}}{\mbox e}^{-\lambda\kappa}\tau_{0}(\lambda)d\lambda,
$
which is equivalent to an exchangeable cluster probability function ($\mathrm{ECPF}$) in \cite{ZhouFoF}. Now, from Theorem~\ref{thmHIBPmarginalgen} in terms of observed values of $(\hat{A}^{(i)}_{j,k,l}),$ there is the correspondence $(\mathbf{a}_{j,\ell}, \ell\in[d_{j}])=
(\mathbf{a}_{j,k,l}, l\in[n_{j,k}],k\in[r]),$ for $\mathbf{a}_{j,k,l}=(a^{(i)}_{j,k,l},i\in [M_{j}]).$
Hence, using the convention $\prod_{l=1}^{n_{j,k}}c_{l}=1$ for $n_{j,k}=0,$  we can write,
\begin{equation}
\mathbf{S}_{[J]}(\mathbf{a})=\prod_{j=1}^{J}\left[\prod_{\ell=1}^{d_{j}}\mathsf{S}_{j}(\mathbf{a}_{j,\ell}|\rho_{j})\right]=\prod_{j=1}^{J}\prod_{k=1}^{r}\prod_{l=1}^{n_{j,k}}\mathsf{S}_{j}(\mathbf{a}_{j,k,l}|\rho_{j}),
\label{Alikelihood}
\end{equation}
which is the likelihood of $(((\hat{A}^{(1)}_{j,k,l},\ldots,\hat{A}^{(M_{j})}_{j,k,l}),l\in[n_{j,k}],k\in[r]),j\in J),$ otherwise the same as in the respective non-hierarchical cases.
\begin{prop}\label{marginalslabHIBP}The marginal distribution of $((Z^{(i)}_{j},i\in [M_{j}]),j\in [J])$ can be expressed as, 
\begin{equation}
\mathbf{S}_{[J]}(\mathbf{a})\times{n!\prod_{j=1}^{J}\frac{\pi^{d_{j}}_{j}}{
\prod_{k=1}^{r}n_{j,k}!}}
\times \varrho(\mathbf{n}_{r};n|\gamma_{0},\tau_{0},\sum_{j=1}^{J}\psi_{j}(M_{j}))\times \prod_{k=1}^{r}
\bar{G}_{0}(d\tilde{Y}_{k})
\label{multiECPF1}
\end{equation}
where $((X_{j,k}=n_{j,k},j\in [J],k\in[r]),\xi(\varphi)=r),$ $\sum_{j=1}^{J}n_{j,k}=n_{k}>0,$ $\sum_{k=1}^{r}n_{j,k}=d_{j}$ and $\sum_{j=1}^{J}d_{j}=n,$ and  $(\hat{A}^{(i)}_{j,k,l},i\in [M_{j}])=\mathbf{a}_{j,k,l}.$ 
In addition, given $((Z^{(i)}_{j},i\in [M_{j}]),j\in [J]),$
\begin{enumerate}
\item[(i)]the corresponding jumps $(S_{j,k,l},l\in[n_{j,k}],k\in[r])\overset{d}=(S_{j,\ell},\ell\in [d_{j}]),$ of $\mu_{j},$ are conditionally independent with density 
$\vartheta(s|\mathbf{a}_{j,k,l},\rho_{j})$
described in~(\ref{jumpdensity1}).
\item[(ii)]$((\tilde{L}_{k},\tilde{Y}_{k}),k\in[r])$ are the pairs of unobserved jumps and selected atoms of $B_{0},$ where the $(\tilde{L}_{k},k\in [r])$ are conditionally independent with density $\mathbb{P}(\tilde{L}_{k}\in dt)/dt$ equal to, for 
$\kappa=\sum_{j=1}^{J}\psi_{j}(M_{j}),$
\begin{equation}
\eta(t|n_{k},\kappa,\tau_{0})=\frac{t^{n_{k}}{\mbox e}^{-t\sum_{j=1}^{J}\psi_{j}(M_{j})}\tau_{0}(t)}{\int_{0}^{\infty}v^{n_{k}}{\mbox e}^{-v\sum_{j=1}^{J}\psi_{j}(M_{j})}\tau_{0}(v)dv}.
\label{JumpsDist1}
\end{equation}
\end{enumerate}
\end{prop}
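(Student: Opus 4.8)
The plan is to assemble the marginal law directly from the finite marked-Poisson representation already in hand. Theorem~\ref{thmHIBPmarginalgen} writes the process as a finite sum over $k\in[\xi(\varphi)]$ atoms $\tilde{Y}_{k}$, each carrying a count vector $X_{0,k}=(X_{1,k},\ldots,X_{J,k})$ together with a compound collection of iid slab vectors $\hat{A}^{(i)}_{j,k,l}$. Since all of the randomness is now mutually independent across these three layers (the Poisson count $\xi(\varphi)$ and the atoms $\tilde{Y}_{k}$; the count vectors $X_{0,k}$; and the marks $\hat{A}$), I would simply multiply the corresponding laws and simplify. Concretely, I would take the product of the $\mathrm{Poisson}(\varphi)$ mass $\varphi^{r}{\mbox e}^{-\varphi}/r!$ for $\xi(\varphi)=r$; the iid $\bar{G}_{0}(d\tilde{Y}_{k})$ factors; the $\mathrm{MtP}$ masses for each $X_{0,k}$ coming from Proposition~\ref{multisample}(i) and (\ref{mixedtruncatedPoissonpmf}); and the likelihood $\mathbf{S}_{[J]}(\mathbf{a})$ of the slab vectors, which is exactly (\ref{Alikelihood}).

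The bulk of the work is the cancellation inside this product. I would substitute $\varphi=\gamma_{0}\tilde{\psi}_{0}(\kappa)$ with $\kappa=\sum_{j=1}^{J}\psi_{j}(M_{j})$, and use ${\mbox e}^{-\varphi}={\mbox e}^{-\gamma_{0}\int_{0}^{\infty}(1-{\mbox e}^{-\lambda\kappa})\tau_{0}(\lambda)d\lambda}$ from the definition of $\tilde{\psi}_{0}$. Writing each $\mathrm{MtP}$ mass as its multinomial head times $\tilde{p}(n_{k}\mid\kappa,\tau_{0})$ of (\ref{mixedtruncatedPoissonpmf}), the $\tilde{\psi}_{0}(\kappa)^{r}$ arising from $\varphi^{r}$ cancels the $r$ copies of $\tilde{\psi}_{0}(\kappa)$ in the denominators of the $\tilde{p}$, the $n_{k}!$ in each multinomial head cancels the $n_{k}!$ in the corresponding $\tilde{p}$, and the surviving integrals assemble into $\prod_{k=1}^{r}\varpi(n_{k}\mid\kappa,\tau_{0})$, hence into the $\mathrm{ECPF}$ $\varrho$ of (\ref{ECPF}). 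Collecting the multinomial weights gives $\prod_{k}\prod_{j}\pi_{j}^{n_{j,k}}=\prod_{j}\pi_{j}^{d_{j}}$ and $\prod_{k}\prod_{j}n_{j,k}!=\prod_{j}\prod_{k}n_{j,k}!$, producing exactly the factor $\prod_{j}\pi_{j}^{d_{j}}/\prod_{k}n_{j,k}!$. The one point requiring care is the combinatorial constant: the marked-Poisson representation produces an \emph{ordered} configuration carrying a factor $1/r!$ from the $\mathrm{Poisson}(\varphi)$ mass, whereas the marginal law of the random measure lives on the \emph{unordered} atom configuration, so passing to the latter multiplies by the $r!$ orderings; separately, the explicit $n!$ prefactor in (\ref{multiECPF1}) is exactly what cancels the $1/n!$ built into the definition of $\varrho$ in (\ref{ECPF}). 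Reconciling these two bookkeeping factors is the main place to be careful.

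For parts (i) and (ii) no new computation is needed beyond identifying the relevant conditional laws. Part (i) is an immediate re-indexing of Lemma~\ref{Lemmaconditionaljoint}: given the marks $\mathbf{a}_{j,\ell}$, the unobserved jumps $S_{j,\ell}$ of $\mu_{j}$ are conditionally independent with density $\vartheta(s\mid\mathbf{a}_{j,\ell},\rho_{j})$ of (\ref{jumpdensity1}), and one only relabels $(j,\ell)$ as $(j,k,l)$ through the correspondence $(\mathbf{a}_{j,\ell},\ell\in[d_{j}])=(\mathbf{a}_{j,k,l},l\in[n_{j,k}],k\in[r])$ already used in (\ref{Alikelihood}). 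For part (ii) I would run the standard Bayes/Palm update for the jump $\tilde{L}_{k}$ of $B_{0}$ sitting under a selected atom $\tilde{Y}_{k}$: conditionally the total count at that atom is a mixed $\mathrm{Poisson}$ with mean $\kappa\,\tilde{L}_{k}$, so given the count $n_{k}$ the posterior density of $\tilde{L}_{k}$ is proportional to $(\kappa t)^{n_{k}}{\mbox e}^{-\kappa t}\tau_{0}(t)\propto t^{n_{k}}{\mbox e}^{-\kappa t}\tau_{0}(t)$, which after normalization is exactly $\eta(t\mid n_{k},\kappa,\tau_{0})$ of (\ref{JumpsDist1}); equivalently this is the posterior update of the mixing variable $H_{k}$ of Proposition~\ref{multisample}(ii) upon conditioning on $\tilde{X}_{k}=n_{k}$.
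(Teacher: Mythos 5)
Your proof is correct, and it reaches the paper's formula by essentially the same underlying mechanism --- everything is funneled through the multivariate Poisson/$\mathrm{MtP}$ structure of Proposition~\ref{multisample} --- but you package the marginalization differently. The paper starts from the conditional law given $B_{0}$ (the product over $j$ of (\ref{margjbiggen}) in Lemma~\ref{Lemmaconditionaljoint}), strips off $\mathbf{S}_{[J]}(\mathbf{a})$ to recognize the law of $(\sum_{k}N_{j,k}\delta_{Y_{k}},j\in[J])\,|\,B_{0}$, and then integrates out $B_{0}$ by citing \cite[eqs.~(5.1)--(5.2)]{James2017}; you instead start from the already-marginalized compound representation of Theorem~\ref{thmHIBPmarginalgen} and carry out the cancellation by hand, trading the external citation for explicit algebra. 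Your bookkeeping is right: the product of the $\mathrm{Poisson}(\varphi)$ mass, the $\mathrm{MtP}$ masses and the atom factors gives $\tfrac{1}{r!}$ times the display in (\ref{multiECPF1}) divided by $\mathbf{S}_{[J]}(\mathbf{a})$, which is exactly the law of $(((X_{j,k}),\tilde{Y}_{k}),\xi(\varphi))$ as recorded in Corollary~\ref{propMulti1}(ii), so the $r!$ you restore when passing to the unordered atom configuration, and the $n!$ that cancels the $1/n!$ inside $\varrho$ in (\ref{ECPF}), are both accounted for correctly. For (i) your reduction to Lemma~\ref{Lemmaconditionaljoint} via the re-indexing $(\mathbf{a}_{j,\ell},\ell\in[d_{j}])=(\mathbf{a}_{j,k,l},l\in[n_{j,k}],k\in[r])$ is what the paper does; for (ii) your identification of $\tilde{L}_{k}$ with the posterior of the mixing variable $H_{k}$ of Proposition~\ref{multisample} given $\tilde{X}_{k}=n_{k}$ yields exactly (\ref{JumpsDist1}) and is a legitimate, more elementary substitute for the paper's appeal to \cite[Proposition~5.1]{James2017} (and is consistent with Proposition~\ref{postBgivenZq}). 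The only caveat is cosmetic: a fully rigorous treatment of the jump of a CRM at a selected atom is Palm-calculus rather than naive Bayes, but within the conventions of this paper your argument is exactly the intended one.
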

\begin{proof}The form of the joint distribution of $((Z^{(i)}_{j},i\in [M_{j}]),j\in [J])|B_{0}$ is given by taking the product over $j\in[J]$ of (\ref{margjbiggen}) in Lemma~\ref{Lemmaconditionaljoint}.
It follows from (\ref{margjbigacross})
and Proposition~\ref{PropBernoulliPoissonsharing} that, mod $\mathbf{S}_{[J]}(\mathbf{a}),$ the resulting expression corresponds to the distribution of $(\sum_{k=1}^{\infty}N_{j,k}\delta_{Y_{k}},j\in [J])|B_{0}$ in Proposition~\ref{multisample}. Using the descriptions in that proof, the expression in~(\ref{multiECPF1}) may then be obtained by an application of~\cite[eqs. (5.1) and (5.2)]{James2017} with $M=1$ and $m_{\ell}=1.$ Alternatively one may use the Poisson calculus methods in \cite{James2002,James2005}. Statement [(i)] follows from Lemma~\ref{Lemmaconditionaljoint} and the correspondence in Theorem~\ref{thmHIBPmarginalgen}. Statement [(ii)] can be read from~\cite[Proposition 5.1]{James2017}.
\end{proof}
Proposition~\ref{marginalslabHIBP} describes the joint marginal distribution in terms of the components distributions that can be sampled, the following presents an equivalent form of (\ref{multiECPF1})
\begin{cor}\label{altmargslab}Consider the specifications in Proposition~
\ref{marginalslabHIBP}, and set $\kappa=\sum_{j=1}^{J}\psi_{j}(M_{j}),$ then the marginal distribution of $((Z^{(i)}_{j},i\in [M_{j}]),j\in [J])$ expressed as (\ref{multiECPF1}), is equivalent to, 
\begin{equation}
\frac{\Delta(\mathbf{a})n!\varrho(\mathbf{n}_{r};n|\gamma_{0},\tau_{0},\sum_{j=1}^{J}\psi_{j}(M_{j}))}{{[\sum_{j=1}^{J}\psi_{j}(M_{j})]}^{n}\prod_{j=1}^{J}\prod_{k=1}^{r}n_{j,k}!}
\prod_{k=1}^{r}
\bar{G}_{0}(d\tilde{Y}_{k})
\label{multiECPF2}
\end{equation}
where $\Delta(\mathbf{a})=\prod_{j=1}^{J}\prod_{\ell=1}^{d_{j}}\int_{0}^{\infty}\left[\prod_{i=1}^{M_{j}}G_{A_{j}}(da^{(i)}_{j,\ell}|s)\right]\rho_{j}(s)ds.$ 
\end{cor}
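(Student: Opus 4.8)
The plan is to verify the identity by a direct algebraic comparison of the two displays, tracking how the factors $\psi_{j}(M_{j})$ migrate between the normalizing constants concealed inside $\mathbf{S}_{[J]}(\mathbf{a})$ and the multinomial weights $\pi_{j}^{d_{j}}$. No new probabilistic input is required beyond the definitions already in force; the content is entirely bookkeeping, and (\ref{multiECPF2}) is simply a restatement of (\ref{multiECPF1}) in which the per-group normalizations have been absorbed into a single power of $\kappa=\sum_{j=1}^{J}\psi_{j}(M_{j})$.

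First I would unwind $\mathbf{S}_{[J]}(\mathbf{a})$ from (\ref{Alikelihood}). Each factor $\mathsf{S}_{j}(\mathbf{a}_{j,\ell}|\rho_{j})$ in (\ref{generaljointtrunc}) carries a division by $\psi_{j}(M_{j})$, and for each $j$ there are $d_{j}=\sum_{k=1}^{r}n_{j,k}$ such factors, so the product of the remaining (unnormalized) numerators over $j$ and $\ell$ is exactly $\Delta(\mathbf{a})$. This gives
\[
\mathbf{S}_{[J]}(\mathbf{a})=\frac{\Delta(\mathbf{a})}{\prod_{j=1}^{J}{[\psi_{j}(M_{j})]}^{d_{j}}}.
\]

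Next I would rewrite the multinomial weights using $\pi_{j}=\psi_{j}(M_{j})/\kappa$ together with the count constraint $\sum_{j=1}^{J}d_{j}=n$, which yields
\[
\prod_{j=1}^{J}\pi_{j}^{d_{j}}=\frac{\prod_{j=1}^{J}{[\psi_{j}(M_{j})]}^{d_{j}}}{\kappa^{n}}.
\]
Substituting both displays into (\ref{multiECPF1}), the factor $\prod_{j=1}^{J}{[\psi_{j}(M_{j})]}^{d_{j}}$ in the denominator of $\mathbf{S}_{[J]}(\mathbf{a})$ cancels exactly against the identical factor produced by $\prod_{j=1}^{J}\pi_{j}^{d_{j}}$. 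What survives is $\Delta(\mathbf{a})$, the single factor $\kappa^{-n}$, the combinatorial term $n!/\prod_{j=1}^{J}\prod_{k=1}^{r}n_{j,k}!$, the $\mathrm{ECPF}$ $\varrho(\mathbf{n}_{r};n|\gamma_{0},\tau_{0},\kappa)$, and the atom law $\prod_{k=1}^{r}\bar{G}_{0}(d\tilde{Y}_{k})$, which is precisely (\ref{multiECPF2}).

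The only point requiring care is the index bookkeeping: one must check that the exponents $d_{j}$ generated by the normalization of $\mathbf{S}_{[J]}(\mathbf{a})$ agree with those from $\prod_{j=1}^{J}\pi_{j}^{d_{j}}$, and that the identity $\sum_{j=1}^{J}d_{j}=n$ is invoked to collapse the product of the $\kappa^{-d_{j}}$ into the single power $\kappa^{-n}$. There is no genuine analytic difficulty here; the equivalence is a purely formal rearrangement of the factors already present in (\ref{multiECPF1}).
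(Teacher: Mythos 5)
Your proposal is correct: the cancellation of $\prod_{j=1}^{J}[\psi_{j}(M_{j})]^{d_{j}}$ between the normalizations hidden in $\mathbf{S}_{[J]}(\mathbf{a})$ and the multinomial weights $\prod_{j=1}^{J}\pi_{j}^{d_{j}}$, together with $\sum_{j=1}^{J}d_{j}=n$ collapsing the denominator to $\kappa^{n}$, is exactly the rearrangement that turns (\ref{multiECPF1}) into (\ref{multiECPF2}). The paper states this corollary without proof precisely because it is this purely formal bookkeeping, so your argument supplies the intended (and only needed) verification.
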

\subsubsection{Calculations for $B_{0}\sim \mathrm{GG}(\alpha,\zeta;\theta_{0}G_{0})$}
Suppose that $B_{0}\sim \mathrm{GG}(\alpha,\zeta;\theta_{0}G_{0}),$ then for Proposition~\ref{marginalslabHIBP} and Corollary~\ref{altmargslab},
\begin{equation}
n!\varrho(\mathbf{n}_{r};n|\gamma_{0},\tau_{0},\kappa)={\mbox e}^{-\theta_{0}\gamma_{0}\psi_{\alpha,\zeta}(\kappa)}\theta^{r}_{0}\gamma^{r}_{0}{(\frac{\kappa}{\kappa+\zeta})}^{n}\prod_{k=1}^{r}\frac{\Gamma(n_{k}-\alpha)}{\Gamma(1-\alpha)}
\label{GGBlikelihood}
\end{equation}
Set $p=\kappa/(\kappa+\zeta)$ in \cite[Section 2.2 and Section 3, eq. (13)]{ZhouFoF} to recover equivalent expressions.
\subsubsection{Multi-group ECPF}
As a Corollary, the next result, which may be compared with~\cite[Corollary 2]{ZhouFoF}, describes a multi-group version of an $\mathrm{ECPF}.$ 

\begin{cor}\label{propMulti1}Set $\sum_{j=1}^{J}d_{j}=n,$ and let $K(n)=r\in [n]$ denote the number of distinct points $(\tilde{Y}_{1},\ldots,\tilde{Y}_{r})$ drawn from $n$ samples from $\bar{B}_{0}.$ 
\begin{enumerate}
\item[(i)]Then  the joint distribution of random variables $((\tilde{\omega}_{j,\ell},\ell\in[\xi_{j}], \xi_{j}=d_{j}),j\in[J]),$ described in Lemma~\ref{LemmaprelimjointB}~(\ref{Jointmarginalpartitionrep}), can be expressed as
\begin{equation}
\frac{n!\prod_{j=1}^{J}\pi^{d_{j}}_{j}}{\prod_{j=1}^{J}d_{j}!}\times \varrho(\mathbf{n}_{r};n|\gamma_{0},\tau_{0},\sum_{j=1}^{J}\psi_{j}(M_{j}))\times \prod_{k=1}^{r}\bar{G}_{0}(d\tilde{Y}_{k})
\label{multiECPFfirst}
\end{equation}
where $\gamma_{0}=G_{0}(\Omega).$
\item[(ii)]The distribution of the variables in $[(i)],$
is equivalent to 
$$
\frac{r!\prod_{k=1}^{r}\prod_{j=1}^{J}n_{j,k}!}{\prod_{j=1}^{J}d_{j}!}
$$
times the distribution of $((X_{j,k}=n_{j,k},j\in [J]),\tilde{Y}_{k}),k\in [\xi(\varphi)), \xi(\varphi)=r).$ 
\end{enumerate}
\end{cor}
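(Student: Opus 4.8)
The plan is to prove Corollary~\ref{propMulti1} by assembling pieces already established earlier in the paper, specifically Lemma~\ref{LemmaprelimjointB}, Proposition~\ref{multisample}, and the marginalization logic underlying Proposition~\ref{marginalslabHIBP}. For part (i), I would start from the representation~(\ref{Jointmarginalpartitionrep}) in Lemma~\ref{LemmaprelimjointB}, which tells us that given $B_{0}$ the atoms $(\tilde{\omega}_{j,\ell})$ selected by group $j$ are $\xi_{j}$ iid draws from $\bar{B}_{0}$, with $\xi_{j}\sim\mathrm{Poisson}(\psi_{j}(M_{j})B_{0}(\Omega))$. The key observation is that the desired distribution is exactly~(\ref{multiECPF1}) of Proposition~\ref{marginalslabHIBP} but with the $\mathbf{S}_{[J]}(\mathbf{a})$ factor and the associated combinatorial factor $n!\prod_{j}\pi_{j}^{d_{j}}/\prod_{k}n_{j,k}!$ replaced by the simpler allocation factor $n!\prod_{j}\pi_{j}^{d_{j}}/\prod_{j}d_{j}!$. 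This is precisely what one gets by integrating out the mark/slab values $(\hat{A}^{(i)}_{j,k,l})$, since those contribute only through $\mathbf{S}_{[J]}(\mathbf{a})$, which integrates to one, and by not tracking the finer clustering of the $d_{j}$ atoms of group $j$ into the $r$ distinct values. Concretely, I would integrate~(\ref{margjbiggen}) over $j\in[J]$ against $B_{0}$, then unconditionally over $B_{0}$ using the Poisson/CRM calculus of Proposition~\ref{multisample}, producing $\varrho(\mathbf{n}_{r};n\mid\gamma_{0},\tau_{0},\sum_{j}\psi_{j}(M_{j}))$ as the exchangeable cluster factor and $\prod_{k}\bar{G}_{0}(d\tilde{Y}_{k})$ for the distinct atoms. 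The remaining multinomial-type factor $n!\prod_{j}\pi_{j}^{d_{j}}/\prod_{j}d_{j}!$ records the allocation of the total $n=\sum_{j}d_{j}$ draws into the $J$ groups, with $\pi_{j}=\psi_{j}(M_{j})/\sum_{l}\psi_{l}(M_{l})$.

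For part (ii), the task is purely combinatorial: to show that the distribution in (i), written in terms of the group totals $(d_{j})$, equals a stated ratio times the distribution of the refined random variables $((X_{j,k}=n_{j,k}),\tilde{Y}_{k})$ from Proposition~\ref{multisample}. The bridge is the relation $\varrho(\mathbf{n}_{r};n\mid\cdots)$ in~(\ref{ECPF}) together with the per-cell counts. I would compare the expression in~(\ref{multiECPFfirst}) against the joint law of $((X_{j,k},j\in[J]),\tilde{Y}_{k})_{k\in[r]}$ supplied by Proposition~\ref{multisample}, particularly items (i)--(iv) there, which give the joint pmf of $X_{0,k}$ factored through $\tilde{X}_{k}$ and a multinomial split. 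The discrepancy between the coarse description (grouping only by $d_{j}$) and the fine description (tracking all $n_{j,k}$) is exactly a ratio of multinomial coefficients, which I expect to reduce to $r!\prod_{k}\prod_{j}n_{j,k}!/\prod_{j}d_{j}!$ after accounting for the number of ways to arrange the $r$ distinct atoms and the within-group allocations across cells.

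The main obstacle, and the step requiring the most care, will be bookkeeping the combinatorial factors consistently. There are several overlapping counting conventions in play: the $\xi_{j}$ atoms are ordered iid draws in Lemma~\ref{LemmaprelimjointB}, whereas the $r$ distinct values $\tilde{Y}_{k}$ in Proposition~\ref{multisample} arise from an unordered/exchangeable clustering, and the cell counts $n_{j,k}$ must be reconciled with the group totals $d_{j}=\sum_{k}n_{j,k}$ and the global total $n=\sum_{j}d_{j}=\sum_{k}n_{k}$. The factor $n!\prod_{j}\pi_{j}^{d_{j}}/\prod_{j}d_{j}!$ in~(\ref{multiECPFfirst}) versus the $n!\prod_{j}\pi_{j}^{d_{j}}/\prod_{k}\prod_{j}n_{j,k}!$ implicit in~(\ref{multiECPF1}) differ precisely by $\prod_{j}\prod_{k}n_{j,k}!/\prod_{j}d_{j}!$, and matching this against the $r!$ from the exchangeable relabelling of distinct atoms is where one must be most vigilant about double-counting. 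I would verify the claimed ratio $r!\prod_{k}\prod_{j}n_{j,k}!/\prod_{j}d_{j}!$ by writing both laws on a common event $\{(X_{j,k}=n_{j,k}),\tilde{Y}_{k}\}$ and cancelling the shared $\varrho$ and $\prod_{k}\bar{G}_{0}(d\tilde{Y}_{k})$ factors, leaving a pure ratio of multinomial normalizations to identify.
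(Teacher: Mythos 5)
Your proposal is correct and follows the same route the paper intends (the corollary is stated without an explicit proof): part (i) is the law of the selected atoms from Lemma~\ref{LemmaprelimjointB} integrated over $B_{0}$ via the same Poisson-calculus step used for Proposition~\ref{marginalslabHIBP}, with the slab factor $\mathbf{S}_{[J]}(\mathbf{a})$ integrating to one, and part (ii) is the combinatorial reconciliation with the iid-marked Poisson description of Proposition~\ref{multisample}. Your bookkeeping of the $r!$ from the exchangeable ordering of the distinct atoms and of the ratio $\prod_{j}\prod_{k}n_{j,k}!/\prod_{j}d_{j}!$ is exactly the check that makes the two displays consistent.
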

\begin{rem}See \cite[Proposition 4 and eq. (5)]{JLP2}, and also \cite[Section 2.3]{DiBenedettopartition}, for other interpretations of the ECPF in~(\ref{ECPF}). Pitman~\cite{PitmanPoissonMix} provides additional interpretations, and points out these correspondences to earlier work of Fisher and McCloskey on species sampling models. See also~\cite{HJL,JamesStick}.
\end{rem}
\subsection{Posterior distributions for $B_{0}$}
We now describe the posterior distribution of $B_{0}.$ We note that the result only depends on the choice of $A_{j}$
through $\psi_{j}(M_{j}).$
\begin{prop}\label{postBgivenZq}
Define the L\'evy density $\tau_{0,J}(\lambda)={\mbox e}^{-\lambda\sum_{j=1}^{J}\psi_{j}(M_{j})}\tau_{0}(\lambda).$ Then the posterior distribution of $B_{0}| ((Z^{(i)}_{j},i\in [M_{j}]),j\in [J])$ is equivalent
to the distribution of the random measure
\begin{equation}
\tilde{B}_{0,J}+\sum_{k=1}^{r}\tilde{L}_{k}\delta_{\tilde{Y}_{k}}
\label{postBrep}
\end{equation}
where $\tilde{B}_{0,J}=\sum_{k=1}^{\infty}\tau'_{k}\delta_{Y'_{k}}\sim \mathrm{CRM}(\tau_{0,J},G_{0}),$ and independent of this, $(\tilde{L}_{k},k\in [r])$ are as in~(\ref{JumpsDist1}).
\end{prop}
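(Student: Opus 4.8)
The plan is to establish the posterior characterization of $B_{0}$ by disintegrating the joint law of $(B_{0}, ((Z^{(i)}_{j}),j\in[J]))$ and exploiting the Poisson process structure already assembled in Proposition~\ref{multisample} and Proposition~\ref{marginalslabHIBP}. The crucial simplification is that the posterior depends on the observed matrices only through the counts $(N_{j,k},j\in[J])$ of jumps of each $\mu_{j}$ attached to a common atom $Y_{k}$, and these in turn, via Proposition~\ref{PropBernoulliPoissonsharing}~(i) and Corollary~\ref{corjointB}, depend on $B_{0}$ only through the Poisson vectors $(\mathscr{P}_{j,k}(\psi_{j}(M_{j})\tau_{k}),j\in[J])$. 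Thus conditionally on $B_{0}$, the only data-dependent factor involving a jump $\tau_{k}$ is the ``thinning''/acceptance probability that at least one of the $J$ groups selects the feature $Y_{k}$, together with the ``rejection'' factor $\mathrm{e}^{-\lambda\sum_{j}\psi_{j}(M_{j})}$ coming from the event that a latent atom is never selected. This is precisely the standard completely-random-measure posterior update: unselected mass is exponentially tilted, selected atoms acquire fixed jumps with an updated density.

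First I would write down the joint density of $B_{0}$ and the data by combining the marginal expression in Proposition~\ref{marginalslabHIBP}, equation~(\ref{multiECPF1}), with the conditional-given-$B_{0}$ description supplied by Lemma~\ref{Lemmaconditionaljoint}, equation~(\ref{margjbiggen}); the term $\mathrm{e}^{-\psi_{j}(M_{j})B_{0}(\Omega)}$ summed over $j$ produces the tilting factor $\mathrm{e}^{-\sum_{j}\psi_{j}(M_{j})\int_{0}^{\infty}\lambda\,N(d\lambda,d\omega)}$ acting on the underlying Poisson random measure $N$ of $B_{0}$. Then I would invoke the Poisson partition (Palm) calculus as developed in~\cite{James2002,James2005} and used throughout~\cite{James2017}: exponential tilting of $N\sim\mathrm{PRM}(\tau_{0}(\lambda)d\lambda\, G_{0}(d\omega))$ by $\mathrm{e}^{-\kappa\lambda}$, with $\kappa=\sum_{j=1}^{J}\psi_{j}(M_{j})$, sends its mean intensity to $\tau_{0,J}(\lambda)=\mathrm{e}^{-\lambda\kappa}\tau_{0}(\lambda)$ times $G_{0}(d\omega)$, which is exactly the density defining $\tilde{B}_{0,J}$. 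Disintegrating the Poisson measure into its selected atoms $(\tilde{Y}_{k},k\in[r])$ and the remaining unselected (diffuse) part yields the additive decomposition~(\ref{postBrep}): the diffuse remainder is an independent $\mathrm{CRM}(\tau_{0,J},G_{0})$, while each selected atom $\tilde{Y}_{k}$ carries a fixed jump $\tilde{L}_{k}$ whose conditional density, obtained by collecting all $\lambda$-dependent factors attached to the $k$-th atom, is $\propto \lambda^{n_{k}}\mathrm{e}^{-\lambda\kappa}\tau_{0}(\lambda)$, i.e.\ exactly~(\ref{JumpsDist1}). Independence of $\tilde{B}_{0,J}$ from the jumps $(\tilde{L}_{k})$ is the standard Poisson conditioning property.

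The main obstacle, and the step requiring most care, is verifying that all data-dependent factors involving the latent mass $\lambda=\tau_{k}$ collapse into just $\psi_{j}(M_{j})$ through $\kappa=\sum_{j}\psi_{j}(M_{j})$, so that the slab distribution $G_{A_{j}}$ and the jump densities $\rho_{j}$ of the individual $\mu_{j}$ play no role in the posterior of $B_{0}$ beyond $\psi_{j}(M_{j})$. This is exactly the content of the remark preceding the statement, and it is the reason one must route the argument through the Poisson-count representation of Proposition~\ref{multisample} rather than the raw matrix likelihood: the factor $\mathbf{S}_{[J]}(\mathbf{a})$ and the jump densities $\vartheta(s|\mathbf{a}_{j,k,l},\rho_{j})$ in~(\ref{jumpdensity1}) factor out completely, being conditionally independent of $B_{0}$ given the counts, as recorded in Proposition~\ref{marginalslabHIBP}. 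Once one confirms that, after integrating out these $\mu_{j}$-level quantities, the only surviving $\tau_{k}$-dependence is $\tau_{k}^{n_{k}}\mathrm{e}^{-\kappa\tau_{k}}$, the exponential-tilting identification of $\tau_{0,J}$ and the jump density~(\ref{JumpsDist1}) follow immediately, and the additive structure~(\ref{postBrep}) is a direct reading of~\cite[Proposition 5.1]{James2017} with the mixed-Poisson slab identified in the proof of Proposition~\ref{multisample}.
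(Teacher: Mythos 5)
Your proposal is correct and follows essentially the same route as the paper: reduce the conditioning to the sufficient Poisson count process $(\sum_{k}N_{j,k}\delta_{Y_{k}},j\in[J])$, since the factors $\mathbf{S}_{[J]}(\mathbf{a})$ and the $\mu_{j}$-level jump densities do not involve $B_{0}$, and then apply the standard CRM posterior update of \cite[Proposition 5.1]{James2017} (equivalently, Poisson partition calculus) to obtain the exponentially tilted $\mathrm{CRM}(\tau_{0,J},G_{0})$ plus independent fixed jumps with density~(\ref{JumpsDist1}). The paper states this in two sentences; you have merely filled in the details it leaves implicit.
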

\begin{proof}
The distribution of $B_{0}|((Z^{(i)}_{j},i\in [M_{j}]),j\in [J])$ is the same as $B_{0}|(\sum_{k=1}^{\infty}N_{j,k}\delta_{Y_{k}},j\in [J]).$ Hence the result follows from  \cite[Proposition 5.1]{James2017}, with $M=1.$
\end{proof}
It follows that the posterior distribution of $B_{0}$ in Proposition~\ref{postBgivenZq}, is also equivalent to that of $B_{0}|\sum_{k=1}^{\infty}[\sum_{j=1}^{J}N_{j,k}]\delta_{Y_{k}},$ where, using the specifications in Proposition~\ref{multisample},
\begin{equation}
\sum_{k=1}^{\infty}[\sum_{j=1}^{J}N_{j,k}]\delta_{Y_{k}}\overset{d}=\sum_{k=1}^{\xi(\varphi)}\tilde{X}_{k}\delta_{\tilde{Y}_{k}}
\label{PoissonIBP}
\end{equation}
is a univariate Poisson $\mathrm{IBP}$ with $\sum_{j=1}^{J}N_{j,k}|\tau_{k}\sim \mathrm{Poisson}(\tau_{k}\sum_{j=1}^{J}\psi_{j}(M_{j})).$ Hence more specific details may be read from \cite[See section 4.2]{James2017}. The next explicit computations can be read from~\cite[Section 4.2.1]{James2017}.
\subsubsection{Descriptions for $B_{0}$ in generalized gamma case}
If $B_{0}\sim \mathrm{GG}(\alpha,\zeta;\theta_{0}G_{0}),$ then 
$\tilde{B}_{0,J}\sim\mathrm{GG}(\alpha,\zeta+\sum_{j=1}^{J}\psi_{j}(M_{j});\theta_{0}G_{0})$ and $\tilde{L}_{k}\overset{ind}\sim \mathrm{Gamma}(n_{k}-\alpha,\zeta+\sum_{j=1}^{J}\psi_{j}(M_{j})).$ In addition $\tau_{0,J}(\lambda)=\tilde{\tau}_{\alpha}(\lambda|\zeta+\sum_{j=1}^{J}\psi_{j}(M_{j}),\theta_{0}).$
\subsubsection{The case where $B_{0}$ is $\mathrm{sBP}((\alpha,\beta),\theta_{0},G_{0})$} In view of the standard specifications for $B_{0}$ as a Beta process for the Bernoulli HIBP in~Section~\ref{Bernoulliclassic}, ~(\ref{BetaB}), it is interesting to look at details for that case and the more flexible extension where $B_{0}$ is  $\mathrm{sBP}(\alpha,\theta_{0},G_{0}).$ Setting $\tau_{0}(p)=\rho_{\alpha,\beta}(p)$ in $\ref{stablebetauni},$ it follows that $\tilde{B}_{0,J},$ has L\'evy density 
$\tau_{0,J}(p)={\mbox e}^{-p\sum_{j=1}^{J}\psi_{j}(M_{j})}\rho_{\alpha,\beta}(p),$ and $\tilde{L_{k}}$ are independent with densities proportional to 
\begin{equation}
{\mbox e}^{-p\sum_{j=1}^{J}\psi_{j}(M_{j})} p^{n_{k}-\alpha-1}(1-p)^{\beta
+\alpha-1}\indic_{\{0<p<1\}},
\label{expBeta}
\end{equation} 
corresponding to exponentially tilted Beta random variables. See~\cite[Section 4.4.2]{James2005} for  inhomogeneous versions of these quantities.

\subsection{Posterior distributions of $(\mu_{j},j\in[J])$}
Proposition~\ref{postBgivenZq} leads next to descriptions of the posterior distributions of $(\mu_{j}, j\in [J])|((Z^{(i)}_{j},i\in [M_{j}]),j\in [J]).$ 

\begin{prop}\label{propmupostrep}
Consider the specifications in Proposition~
\ref{marginalslabHIBP} and Proposition~\ref{postBgivenZq}. Then the posterior distribution of $(\mu_{j}, j\in [J])|((Z^{(i)}_{j},i\in [M_{j}]),j\in [J]),$ is such that for each $j,$ the distribution of $\mu_{j}$ is equivalent to
\begin{equation}
\tilde{\mu}_{j,M_{j}}+\sum_{k=1}^{r}\left[\sum_{l=1}^{\infty}\tilde{s}_{j,k,l}\right]\delta_{\tilde{Y}_{k}}+\sum_{k=1}^{r}\left[\sum_{l=1}^{n_{j,k}}S_{j,k,l}\right]\delta_{\tilde{Y}_{k}}
\label{mureppost}
\end{equation}
where $\tilde{\mu}_{j,M_{j}}|\tilde{B}_{0,J}$ is $\mathrm{CRM}(\rho_{j,M_{j}},\tilde{B}_{0,J}),$ and  for each fixed $(j,k),$ the $\tilde{\mathbf{s}}_{j,k}=(\tilde{s}_{j,k,l})|\tilde{L}_{k}$
are points of a $\mathrm{PRM}$ with mean  $\tilde{L}_{k}\rho_{j,M_{j}}.$
Additionally, define independent subordinators $(\tilde{\sigma}_{j}(t),t\ge 0)$ with $-\log \mathbb{E}[{\mbox e}^{-\tilde{\sigma}_{j}(t)}]=t\int_{0}^{1}(1-{\mbox e}^{-s})\rho_{j,M_{j}}(s)ds,$ and let for each $j,$ $((\tilde{\sigma}_{j,k}(t)))$ and $((\varsigma_{j,k}(t)))$ denote iid copies of $\tilde{\sigma}_{j},$ then,
applying Proposition~\ref{HIBPsumrep},
\begin{equation}
\tilde{\mu}_{j,M_{j}}+\sum_{k=1}^{r}\left[\sum_{l=1}^{\infty}\tilde{s}_{j,k,l}\right]\delta_{\tilde{Y}_{k}}\overset{d}=\sum_{k=1}^{\infty}\tilde{\sigma}_{j,k}(\tau'_{k})\delta_{Y'_{k}}+\sum_{k=1}^{r}\varsigma_{j,k}(\tilde{L}_{k})\delta_{\tilde{Y}_{k}},
\label{mupartpost}
\end{equation}
where all variables in~(\ref{mupartpost}) are random except 
$((\tilde{Y}_{k},k\in[r]),r).$
\end{prop}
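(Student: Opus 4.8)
The plan is to obtain the posterior law of $(\mu_j, j\in[J])$ by a disintegration over $B_0$: I would write the joint posterior as $\mathcal{L}(\mu_j \mid B_0,\text{data})$ integrated against $\mathcal{L}(B_0 \mid \text{data})$. The generative structure $B_0\to\mu_j\to(Z^{(i)}_j,i\in[M_j])$ renders $\mu_j$ conditionally independent of the other groups given $B_0$, and makes the observations $Z^{(i)}_{j'}$ for $j'\ne j$ irrelevant for $\mu_j$ once $B_0$ is fixed; hence $\mathcal{L}(\mu_j \mid B_0,\text{all data})=\mathcal{L}(\mu_j \mid B_0,(Z^{(i)}_j,i\in[M_j]))$, while the posterior of $B_0$ still pools information across all $J$ groups through $\sum_{j}\psi_j(M_j)$. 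This separation of the two hierarchical levels is exactly what lets them be treated in turn.

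First I would fix $B_0$ and invoke the single-group spike and slab IBP posterior (\cite[Proposition 3.1]{James2017}, as already rephrased in Lemma~\ref{Lemmaconditionaljoint}): given $B_0$ and $(Z^{(i)}_j,i\in[M_j])$, the posterior of $\mu_j$ equals an independent $\mathrm{CRM}(\rho_{j,M_j},B_0)$ carrying the thinned L\'evy density $\rho_{j,M_j}(s)=[1-\pi_{A_j}(s)]^{M_j}\rho_j(s)$, plus fixed jumps $S_{j,\ell}$ at the $d_j$ selected atoms $\tilde{\omega}_{j,\ell}$, each with density $\vartheta(s\mid\mathbf{a}_{j,\ell},\rho_j)$ of~(\ref{jumpdensity1}). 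Next I would substitute the posterior of $B_0$ from Proposition~\ref{postBgivenZq}, namely $\tilde{B}_{0,J}+\sum_{k=1}^{r}\tilde{L}_k\delta_{\tilde{Y}_k}$, into the $\mathrm{CRM}(\rho_{j,M_j},B_0)$ term and apply the hierarchical decomposition of Proposition~\ref{HIBPsumrep} with $\rho_j$ replaced by $\rho_{j,M_j}$. The continuous part $\tilde{B}_{0,J}$ produces $\tilde{\mu}_{j,M_j}\sim\mathrm{CRM}(\rho_{j,M_j},\tilde{B}_{0,J})$, while each fixed atom $(\tilde{L}_k,\tilde{Y}_k)$ of the posterior $B_0$ spawns a cluster of unselected jumps $\tilde{\mathbf{s}}_{j,k}=(\tilde{s}_{j,k,l})$ that, given $\tilde{L}_k$, form a $\mathrm{PRM}$ with mean $\tilde{L}_k\rho_{j,M_j}$. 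Identifying the selected atoms $\tilde{\omega}_{j,\ell}$ with the common atoms $\tilde{Y}_k$ (group-$j$ multiplicity $n_{j,k}$, so that $\sum_{\ell:\,\tilde{\omega}_{j,\ell}=\tilde{Y}_k}S_{j,\ell}=\sum_{l=1}^{n_{j,k}}S_{j,k,l}$) delivers the three terms of~(\ref{mureppost}). The subordinator form~(\ref{mupartpost}) then follows by applying the equivalence of Proposition~\ref{HIBPsumrep}(i), now with Laplace exponent built from $\rho_{j,M_j}$ and hence the subordinator $\tilde{\sigma}_j$, to both the continuous atoms $\tau'_k$ of $\tilde{B}_{0,J}$ and the fixed jumps $\tilde{L}_k$, giving $\sum_k\tilde{\sigma}_{j,k}(\tau'_k)\delta_{Y'_k}+\sum_{k=1}^{r}\varsigma_{j,k}(\tilde{L}_k)\delta_{\tilde{Y}_k}$.

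The main obstacle is to justify that the unselected clusters sitting at the observed atoms $\tilde{Y}_k$ are governed by the \emph{thinned} density $\rho_{j,M_j}$ and not the full $\rho_j$, and that these clusters are conditionally independent of the selected jumps $S_{j,k,l}$ at the same atom. This is precisely the point where the IBP posterior thinning must be shown to act cluster by cluster: the jumps of $\mu_j$ attached to a fixed $B_0$-atom of size $\tilde{L}_k$ form, a priori, a $\mathrm{PRM}$ with mean $\tilde{L}_k\rho_j$ (Proposition~\ref{HIBPsumrep}), and conditioning on the event that none of the $M_j$ customers selected a given jump multiplies its intensity by $[1-\pi_{A_j}(s)]^{M_j}$, independently across jumps and clusters. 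Verifying that this thinning commutes with the clustering at fixed atoms, so that the posterior factors into (i) a thinned $\mathrm{CRM}$ over $\tilde{B}_{0,J}$, (ii) thinned $\mathrm{PRM}$ clusters of mean $\tilde{L}_k\rho_{j,M_j}$ at the $\tilde{Y}_k$, and (iii) the selected jumps with density $\vartheta$, is the step requiring care; once it is in place the remaining computations are the routine Laplace-functional and Poisson-calculus identifications already underlying Propositions~\ref{HIBPsumrep} and~\ref{postBgivenZq}.
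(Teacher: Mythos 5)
Your proposal is correct and follows essentially the same route as the paper's proof: condition on $B_{0}$ and invoke the single-group spike and slab posterior from \cite{James2017} (the paper cites Theorem 3.1 there, giving $\hat{\mu}_{j,M_{j}}+\sum_{\ell=1}^{d_{j}}S_{j,\ell}\delta_{\omega_{j,\ell}}$ with $\hat{\mu}_{j,M_{j}}|B_{0}\sim\mathrm{CRM}(\rho_{j,M_{j}},B_{0})$), then substitute the posterior representation of $B_{0}$ from Proposition~\ref{postBgivenZq} and apply Proposition~\ref{HIBPsumrep} with $\rho_{j,M_{j}}$ in place of $\rho_{j}$. The ``obstacle'' you flag about cluster-by-cluster thinning is in fact absorbed by that cited theorem, which holds for arbitrary (including atomic) base measures, combined with the decomposition of Proposition~\ref{HIBPsumrep}.
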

\begin{proof}
An application of \cite[Theorem 3.1]{James2017}
shows that the posterior distribution of 
$\mu_{j}|((Z^{(i)}_{j},i \in [M_{j}]),j\in[J]),B_{0}$ is equivalent to 
$\hat{\mu}_{j,M_{j}}+\sum_{\ell=1}^{d_{j}}S_{j,\ell}\delta_{\omega_{j,\ell}}$
where $\hat{\mu}_{j,M_{j}}|B_{0}$ is $\mathrm{CRM}(\rho_{j,M_{j}}, B_{0}).$ Using the posterior representation of $B_{0}$ in (\ref{postBrep}),it follows that the distribution of $\hat{\mu}_{j,M_{j}}|((Z^{(i)}_{j},i \in [M_{j}])$ corresponds to that of (\ref{mupartpost}).
The result is concluded by again noting that $(S_{j,\ell},\ell\in[d_{j}])\overset{d}=(S_{j,k,l},l\in[n_{j,k}],k\in [r]).$
\end{proof}

\subsection{Predictive distributions}
The descriptions in Proposition~\ref{postBgivenZq}
and Proposition~\ref{propmupostrep}, in particular Proposition~\ref{postBgivenZq}, leads to descriptions of the predictive distribution of various univariate and multivariate processes. Here, we describe predictive distributions in the univariate case. Multivariate extensions do not present extra difficulties. We will use the following univariate distribution defined for each $j$ and $M_{j}=0,1,2,\ldots$
\begin{equation}
\frac{\mathbb{I}_{\{{a}_{j}\neq 0\}}\int_{0}^{\infty}\left[G_{A_{j}}(da_{j}|s)\right]\rho_{j,M_{j}}(s)ds}{\psi_{j}(M_{j}+1)-\psi_{j}(M_{j})}
\label{generalunivariatetrunc}
\end{equation}

where $\gamma_{j,M_{j}+1}=\psi_{j}(M_{j}+1)-\psi_{j}(M_{j}).$ In the next result, we suppress notation on some variables which would otherwise indicate dependence on $M_{j}.$

\begin{prop}\label{postgenpredict}
Set $\tilde{\psi}_{0,J}(\kappa)=\int_{0}^{\infty}(1-{\mbox e}^{-\lambda \kappa})\tau_{0,J}(\lambda)d\lambda,$ and for each $j$, $\tilde{\xi}_{j}(\phi_{j})$ is a Poisson random variable with mean $\phi_{j}=G_{0}(\Omega)\tilde{\psi}_{0,J}(\gamma_{j,M_{j}+1}),$ $(\tilde{X}_{j,k})$ are iid 
$\mathrm{MtP}(\gamma_{j,M_{j}+1},\tau_{0,J})$ and $(Y_{j,k})$  are iid $\bar{G}_{0},$ and $(\tilde{Y}_{k},k\in[r])$ are fixed previously observed points. For any $j\in [J],$ the predictive distribution of $Z^{(M_{j}+1)}_{j}$ given $((Z^{(i)}_{\ell},i\in [M_{\ell}]),\ell\in [J]),$ has the representation
\begin{equation}
\hat{Z}_{j}+\sum_{k=1}^{r}\left[\sum_{l=1}^{n_{j,k}}A_{j,k,l}\right]\delta_{\tilde{Y}_{k}},
\label{predictAMj}
\end{equation}
where,
 $(A_{j,k,l}),$ are conditionally independent such that $A_{j,k,l}|S_{j,k,l}=s$ has distribution $G_{A_{j}}(da|s)$, and $(S_{j,k,l})$ are conditionally independent with density specified in Proposition~\ref{marginalslabHIBP}.
$\hat{Z}_{j}|((Z^{(i)}_{l},i \in [M_{j}]),l\in[J]),$ can be represented as
\begin{equation}
\sum_{k=1}^{\tilde{\xi}_{j}(\phi_{j})}\left[\sum_{l=1}^{\tilde{X}_{j,k}}
\tilde{A}_{j,k+r,l}\right]\delta_{Y_{j,k}}+
\sum_{k=1}^{r}\left[\sum_{l=1}^{\tilde{N}_{j,k}}
\tilde{A}_{j,k,l}\right]\delta_{\tilde{Y}_{k}}
\label{newpartpredictA}
\end{equation}
where $(\tilde{N}_{j,k},k\in[r])\overset{d}=(\mathscr{P}_{j,k}(\gamma_{j,M_{j}+1}\tilde{L}_{k}),k\in[r]),$
and 
 $((\tilde{A}_{j,t,l},t\in[r+\tilde{\xi}_{j}(\phi_{j})]))$ are collections of iid variables each with distribution (\ref{generalunivariatetrunc}). 

\end{prop}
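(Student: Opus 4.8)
The plan is to obtain the predictive law of $Z^{(M_j+1)}_j$ by first conditioning on $\mu_j$ and then substituting the posterior law of $\mu_j$ from Proposition~\ref{propmupostrep}. The key observation is that, given $\mu_j,$ the new customer $Z^{(M_j+1)}_j$ is generated by the $\mathrm{IBP}(A_j|\mu_j)$ mechanism from $\mu_j$ alone, so that $Z^{(M_j+1)}_j$ is conditionally independent of the observed data $((Z^{(i)}_\ell,i\in[M_\ell]),\ell\in[J])$ given $\mu_j.$ Hence the predictive distribution is exactly the law of an $\mathrm{IBP}(A_j|\mu_j)$ draw when $\mu_j$ is sampled from its posterior. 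First I would record that posterior from Proposition~\ref{propmupostrep}, which decomposes $\mu_j$ into three disjoint pieces: the completely new part $\tilde\mu_{j,M_j}|\tilde B_{0,J}\sim\mathrm{CRM}(\rho_{j,M_j},\tilde B_{0,J})$ over fresh atoms, the fresh-at-observed-atom jumps $\sum_k[\sum_l\tilde s_{j,k,l}]\delta_{\tilde Y_k}$ forming a $\mathrm{PRM}$ of mean $\tilde L_k\rho_{j,M_j}$ at each $\tilde Y_k$ (with $\tilde B_{0,J},\tau_{0,J},\tilde L_k$ from Proposition~\ref{postBgivenZq}), and the already-selected jumps $\sum_k[\sum_{l=1}^{n_{j,k}}S_{j,k,l}]\delta_{\tilde Y_k}.$ Applying the spike and slab map $G_{A_j}(\cdot|s)$ to each piece and collecting the non-zero entries will yield the three summands in (\ref{predictAMj}) and (\ref{newpartpredictA}).

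For the already-selected jumps $(S_{j,k,l},l\in[n_{j,k}])$ the mechanism assigns $A_{j,k,l}|S_{j,k,l}=s\sim G_{A_j}(da|s)$ directly, which reproduces the term $\sum_{k=1}^r[\sum_{l=1}^{n_{j,k}}A_{j,k,l}]\delta_{\tilde Y_k}$ of (\ref{predictAMj}), with the $S_{j,k,l}$ carrying the density of Proposition~\ref{marginalslabHIBP}~[(i)]. For the fresh-at-observed-atom jumps I would invoke the marking theorem for Poisson processes: marking each jump $s$ of the $\mathrm{PRM}$ of mean $\tilde L_k\rho_{j,M_j}$ by the indicator $b(\cdot)$ of a non-zero entry, with retention probability $\pi_{A_j}(s),$ shows the retained points again form a Poisson process, so the number retained at $\tilde Y_k$ is $\tilde N_{j,k}\sim\mathrm{Poisson}(\tilde L_k\int_0^\infty\pi_{A_j}(s)\rho_{j,M_j}(s)\,ds)=\mathrm{Poisson}(\gamma_{j,M_j+1}\tilde L_k),$ using $\gamma_{j,M_j+1}=\psi_j(M_j+1)-\psi_j(M_j),$ while the retained values are i.i.d.\ from the normalized non-zero law, which is exactly (\ref{generalunivariatetrunc}). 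This produces the second summand of $\hat Z_j$ in (\ref{newpartpredictA}).

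The last and main step is the completely new part $\tilde\mu_{j,M_j}|\tilde B_{0,J}.$ Here I would recognize $\tilde\mu_{j,M_j}$ as a single-group hierarchical CRM with within-group L\'evy density $\rho_{j,M_j}$ over base $\tilde B_{0,J}\sim\mathrm{CRM}(\tau_{0,J},G_0),$ and the restriction of $Z^{(M_j+1)}_j$ to these fresh atoms as a single-customer spike and slab HIBP over that base. Applying Theorem~\ref{thmHIBPmarginalgen} (equivalently Proposition~\ref{multisample}) in this one-customer, one-group special case, the relevant single-customer exponent is $\int_0^\infty\pi_{A_j}(s)\rho_{j,M_j}(s)\,ds=\gamma_{j,M_j+1},$ whence the number of new atoms is $\tilde\xi_j(\phi_j)\sim\mathrm{Poisson}(\phi_j)$ with $\phi_j=G_0(\Omega)\tilde\psi_{0,J}(\gamma_{j,M_j+1}),$ each carrying a count $\tilde X_{j,k}\sim\mathrm{MtP}(\gamma_{j,M_j+1},\tau_{0,J})$ and i.i.d.\ values from (\ref{generalunivariatetrunc}); this yields the first summand of $\hat Z_j.$ I expect the main obstacle to lie in the bookkeeping of this last step: one must verify the parameter identifications against the thinned base $\tau_{0,J}$ (that the single-customer exponent for $\rho_{j,M_j}$ is $\gamma_{j,M_j+1}$ and that $\varphi$ specializes to $\phi_j$), and, more delicately, confirm that the three pieces of the posterior $\mu_j$ occupy genuinely disjoint atom locations, so that the completely-new contribution ($\tilde X_{j,k}$ at the $Y_{j,k}$) and the fresh-at-$\tilde Y_k$ contribution ($\tilde N_{j,k}$) do not collide and may be added as in (\ref{newpartpredictA}). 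Collecting the three contributions then gives the claimed representation.
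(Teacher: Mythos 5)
Your argument is correct, and it reaches the same three-term decomposition as the paper, but it disintegrates the joint law in the opposite order. The paper's proof first conditions on $B_{0}$: it invokes \cite[Proposition 3.2]{James2017} to write the predictive of $Z^{(M_{j}+1)}_{j}$ given the group-$j$ data and $B_{0}$ as $\hat{Z}_{j}+\sum_{\ell=1}^{d_{j}}A_{j,\ell}\delta_{\omega_{j,\ell}}$ with $\hat{Z}_{j}$ an $\mathrm{IBP}(A_{j},\rho_{j,M_{j}}B_{0})$ process, and only then substitutes the posterior representation of $B_{0}$ from Proposition~\ref{postBgivenZq}, splitting $\hat{Z}_{j}$ into $\mathrm{IBP}(A_{j},\rho_{j,M_{j}}\tilde{B}_{0,J})$ and $\mathrm{IBP}(A_{j},\rho_{j,M_{j}}\sum_{k=1}^{r}\tilde{L}_{k}\delta_{\tilde{Y}_{k}})$ pieces before applying Theorem~\ref{thmHIBPmarginalgen} to the first. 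You instead condition on $\mu_{j}$, use its full posterior from Proposition~\ref{propmupostrep}, and mark each piece by hand via the Poisson marking theorem. Both routes rest on the same two computations — the identification $\int_{0}^{\infty}\pi_{A_{j}}(s)\rho_{j,M_{j}}(s)\,ds=\gamma_{j,M_{j}+1}$ and the one-customer specialization of Theorem~\ref{thmHIBPmarginalgen} over the thinned base $\tau_{0,J}$ — so the bookkeeping you flag as the main obstacle is identical in either case. What the paper's order buys is that the thinning of $\rho_{j}$ to $\rho_{j,M_{j}}$ comes pre-packaged in the cited IBP predictive result; what your order buys is a more self-contained, mechanism-level derivation. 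One small clarification to your final worry: pieces two and three of the posterior $\mu_{j}$ deliberately share the atoms $\tilde{Y}_{k}$ (their contributions are meant to be added there, each underlying jump of the Poisson random measure receiving its own independent mark), so the only disjointness you actually need is that the fresh atoms $Y_{j,k}\sim\bar{G}_{0}$ avoid the fixed $\tilde{Y}_{k}$, which holds since $G_{0}$ is non-atomic.
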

\begin{proof}
Apply \cite[Proposition 3.2]{James2017} to obtain a description of the predictive distribution of ${Z}^{(M_{j}+1)}_{j}|((Z^{(i)}_{l},i \in [M_{j}]),l\in[J]),B_{0},$  equating to $\hat{Z}_{j}+\sum_{\ell=1}^{d_{j}}A_{j,\ell}\delta_{\omega_{j,\ell}}.$ Where it follows that $\sum_{\ell=1}^{d_{j}}A_{j,\ell}\delta_{\omega_{j,\ell}}\overset{d}=\sum_{k=1}^{r}\left[\sum_{l=1}^{n_{j,k}}A_{j,k,l}\right]\delta_{\tilde{Y}_{k}},$ and $\hat{Z}_{j}|((Z^{(i)}_{l},i \in [M_{j}]),l\in[J]),B_{0},$ has an $\mathrm{IBP}(A_{j},\rho_{j,M_{j}}B_{0})$ distribution which means that it can be represented as a compound Poisson process with $\mathrm{Poisson}(\gamma_{j,M_{j}+1}B_{0})$ sum of iid variables following the distribution in (\ref{generalunivariatetrunc}), and iid points drawn from $\bar{B}_{0},$ which otherwise is similar to a univariate version of the expressions in Lemma~\ref{LemmaprelimjointB}.  Use (\ref{postBrep}) to express 
$\hat{Z}_{j}$ as a sum of an $\mathrm{IBP}(A_{j},\rho_{j,M_{j}}\tilde{B}_{0,J})$ and $\mathrm{IBP}(A_{j},\rho_{j,M_{j}}\sum_{k=1}^{r}\tilde{L}_{k}\delta_{\tilde{Y}_{k}})$ process. The first term in (\ref{newpartpredictA})
follows by applying Theorem~\ref{thmHIBPmarginalgen} to the univariate $\mathrm{IBP}(A_{j},\rho_{j,M_{j}}\tilde{B}_{0,J})$ with appropriate adjustments. The other expression  follows  directly by expanding $\sum_{k=1}^{r}\tilde{L}_{k}\delta_{\tilde{Y}_{k}}.$ 
\end{proof}

The next result describes the prediction rule for a previously unseen document type, say $Z^{(1)}_{J+1}|\mu_{J+1},B_{0}\sim \mathrm{IBP}(A_{J+1}|\mu_{J+1}).$
\begin{cor}
The predictive distribution of  $Z^{(1)}_{J+1}|((Z^{(i)}_{l},i \in [M_{j}]),l\in[J]),$ equates to the processes in (\ref{newpartpredictA}) setting $j=J+1$ and $M_{J+1}+1=1.$ 
\end{cor}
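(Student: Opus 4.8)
The plan is to reduce this to Proposition~\ref{postgenpredict} by exploiting the conditional independence structure of the hierarchy. The key observation is that, by construction, $\mu_{J+1}|B_{0}\sim\mathrm{CRM}(\rho_{J+1},B_{0})$ is conditionally independent of $(\mu_{j},j\in[J])$ and of all the observed data $((Z^{(i)}_{l},i\in[M_{l}]),l\in[J])$ \emph{given} $B_{0}$. Consequently $Z^{(1)}_{J+1}|\mu_{J+1},B_{0}\sim\mathrm{IBP}(A_{J+1}|\mu_{J+1})$ depends on the existing groups only through the shared base measure $B_{0}$. Integrating out $\mu_{J+1}$ as in Corollary~\ref{corjointB}, the law of $Z^{(1)}_{J+1}|B_{0}$ is that of an $\mathrm{IBP}(A_{J+1},\rho_{J+1}B_{0})$, so the sought predictive distribution is obtained by averaging this process against the posterior law of $B_{0}$ given the existing data.

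First I would invoke Proposition~\ref{postBgivenZq} to write that posterior as $\tilde{B}_{0,J}+\sum_{k=1}^{r}\tilde{L}_{k}\delta_{\tilde{Y}_{k}}$, noting that it conditions on the data only through $\sum_{j=1}^{J}\psi_{j}(M_{j})$ and is therefore unaffected by introducing a fresh index $J+1$. Splitting this posterior measure into its diffuse CRM part $\tilde{B}_{0,J}\sim\mathrm{CRM}(\tau_{0,J},G_{0})$ and its fixed-atom part $\sum_{k=1}^{r}\tilde{L}_{k}\delta_{\tilde{Y}_{k}}$, and using the additivity of the CRM-driven IBP over disjoint pieces of the base measure (superposition of the underlying Poisson random measures, exactly as used in the proof of Proposition~\ref{postgenpredict}), the law of $Z^{(1)}_{J+1}|B_{0}$ decomposes as the independent sum of an $\mathrm{IBP}(A_{J+1},\rho_{J+1}\tilde{B}_{0,J})$ process and an $\mathrm{IBP}(A_{J+1},\rho_{J+1}\sum_{k=1}^{r}\tilde{L}_{k}\delta_{\tilde{Y}_{k}})$ process.

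Next I would observe that this is precisely the decomposition carried out in the proof of Proposition~\ref{postgenpredict}, read with $j=J+1$ and $M_{J+1}=0$. With $M_{J+1}=0$ the thinned density $\rho_{J+1,M_{J+1}}$ reduces to $\rho_{J+1}$ and the increment $\gamma_{J+1,M_{J+1}+1}=\psi_{J+1}(1)-\psi_{J+1}(0)=\psi_{J+1}(1)$; applying Theorem~\ref{thmHIBPmarginalgen} to the diffuse part produces the first sum of (\ref{newpartpredictA}), the newly created features over iid atoms $Y_{J+1,k}\sim\bar{G}_{0}$, while expanding the fixed-atom part produces the second sum over the previously seen atoms $\tilde{Y}_{k}$ with $(\tilde{N}_{J+1,k})\overset{d}=(\mathscr{P}_{J+1,k}(\gamma_{J+1,M_{J+1}+1}\tilde{L}_{k}))$. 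The feature distinguishing a brand-new group is that the observed-jumps term $\sum_{k=1}^{r}[\sum_{l=1}^{n_{j,k}}A_{j,k,l}]\delta_{\tilde{Y}_{k}}$ of (\ref{predictAMj}) is absent, since $n_{J+1,k}=0$ for every $k$: there are no prior observations in group $J+1$, hence no data-specific jumps $S_{J+1,k,l}$ of $\mu_{J+1}$. The predictive law therefore collapses exactly to (\ref{newpartpredictA}) with $j=J+1$ and $M_{J+1}+1=1$.

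The step I expect to require the most care is verifying that the posterior of $B_{0}$ is genuinely unaltered by appending the new group, that is, that conditioning on $((Z^{(i)}_{l},i\in[M_{l}]),l\in[J])$ induces no dependence on $Z^{(1)}_{J+1}$ beyond that mediated through $B_{0}$. This is a direct consequence of the conditional independence of $\mu_{J+1}$ given $B_{0}$ together with the fact, from Proposition~\ref{postBgivenZq}, that the posterior of $B_{0}$ depends on the data only through $\sum_{j=1}^{J}\psi_{j}(M_{j})$; once this is in place, the remainder is a verbatim specialization of the argument already given for Proposition~\ref{postgenpredict}.
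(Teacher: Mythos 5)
Your argument is correct and is essentially the paper's (implicit) one: the corollary is a direct specialization of Proposition~\ref{postgenpredict} to $j=J+1$ with $M_{J+1}=0$, so that $\rho_{J+1,M_{J+1}}=\rho_{J+1}$, $\gamma_{J+1,M_{J+1}+1}=\psi_{J+1}(1)$, and the observed-jumps term of (\ref{predictAMj}) vanishes because $n_{J+1,k}=0$ for all $k$ -- exactly the reasoning the paper itself records in Corollaries~\ref{corPoissonpredict} and \ref{bergenpredict}. One small imprecision: the posterior of $B_{0}$ depends on the data through the counts $(n_{k})$ and atoms $(\tilde{Y}_{k})$ as well, not only through $\sum_{j=1}^{J}\psi_{j}(M_{j})$; what your argument actually needs, and correctly uses, is that the unseen group contributes nothing to that posterior since $\psi_{J+1}(0)=0$.
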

\subsubsection{Prediction calculations for $B_{0}$ generalized gamma}\label{predictGG}
In the case $B_{0}$ is $\mathrm{GG}(\alpha,\zeta;\theta_{0}G_{0})$
Section~\ref{Gengamma}, (\ref{Xfunction}), indicates that the iid collection $(\tilde{X}_{j,k})$ have common pmf 
$p_{\alpha}(m|\gamma_{j,M_{j}+1},\zeta+\sum_{j=1}^{J}\psi_{j}(M_{j}),$ and $\tilde{N}_{j,k}\overset{ind}\sim
\mathrm{NB}(n_{k}-\alpha,q_{j}),$ where 
$$
q_{j}=\frac{\psi_{j}(M_{j}+1)-\psi_{j}(M_{j})}{\zeta+\sum_{l\neq j}^{J}\psi_{l}(M_{l})+\psi_{j}(M_{j}+1)},
$$
where for $j=J,$ $\sum_{l\neq j}^{J}\psi_{l}(M_{l})=\sum_{l=1}^{J-1}\psi_{l}(M_{l}).$ Additionally, from (\ref{rfunction}), $\psi_{0,J}(\kappa)=\theta_{0}
\tilde{\psi}_{\alpha,\tilde{\zeta}_{J}}(\kappa),$
where $\tilde{\zeta}_{J}=\zeta+\sum_{j=1}^{J}\psi_{j}(M_{j}).$

\subsection{Results for the Poisson HIBP case}\label{PoissonCase}
We now obtain calculations and results in the case of  the Poisson HIBP models in Proposition~\ref{mixedPoisson}. As a by-product, our results lead to tractable sampling schemes for general versions of models in~\cite{ZhouCarin2015,Zhou1, ZhouPadilla2016}. In the Poisson cases
\begin{equation}
\psi_{j}(M_{j})=\int_{0}^{\infty}(1-{\mbox e}^{-sM_{j}r_{j}})\rho_{j}(s)ds.
\label{psiPoisson}
\end{equation}
We first apply Proposition~\ref{bigmargBgen} to obtain alternative conditional representations.
\begin{cor}\label{PoissonCor1}Consider the general Poisson HIBP setting in Proposition~\ref{mixedPoisson}, with $A_{j}|s\sim \mathrm{Poisson}(r_{j}s)$, $j\in [J].$ Then from that result the conditional distribution of the processes $((Z^{(i)}_{j},i\in [M_{j}]),j\in[J])|B_{0}$, has the mixed Poisson representations  
$((\sum_{k=1}^{\infty}\mathscr{P}^{(i)}_{j,k}(r_{j}\sigma_{j,k}(\tau_{k}))\delta_{Y_{k}},i\in [M_{j}]), j\in [J])$ in (\ref{FullPoissHIBP}), and can equivalently be expressed in terms of compound Poisson random variables as follows, for each $j\in [J]$
\begin{equation}
(\sum_{k=1}^{\infty}\mathscr{P}^{(i)}_{j,k}(r_{j}\sigma_{j,k}(\tau_{k}))\delta_{Y_{k}},i\in [M_{j}])\overset{d}=(\sum_{k=1}^{\infty}[\sum_{l=1}^{N_{j,k}}{X}^{(i)}_{j,k,l}]\delta_{{Y}_{k}},i\in [M_{j}]),
\label{FullPoissHIBP}
\end{equation}
where the vectors $X_{0,j,k,l}:=(X^{(i)}_{j,k,l},i
\in [M_{j}])$
have distributions $\mathrm{MtP}((r_{j},i\in [M_{j}]),\rho_{j}),$ which can be  read from Proposition~\ref{multisample}, and are independent across $(j,k,l).$ 
Hence the conditional distribution of $(\sum_{i=1}^{M_{j}}Z^{(i)}_{j},j\in[J])|B_{0}$ has the representations
\begin{equation}
(\sum_{k=1}^{\infty}\mathscr{P}_{j,k}(M_{j}r_{j}\sigma_{j,k}(\tau_{k}))\delta_{Y_{k}}, j\in [J])\overset{d}=(\sum_{k=1}^{\infty}[\sum_{l=1}^{N_{j,k}}{\tilde{X}}_{j,k,l}]\delta_{{Y}_{k}},j\in [J]),
\label{FullPoissHIBP}
\end{equation}
where $\tilde{X}_{j,k,l}:=\sum_{i=1}^{M_{j}}{X}^{(i)}_{j,k,l}\sim \mathrm{MtP}(M_{j}r_{j},\rho_{j}),$ independent across $(j,k,l).$ For clarity $X_{0,j,k,l}|\tilde{X}_{j,k,l}=a_{j,k,l}$ are $\mathrm{Multinomial}(a_{j,k,l},(1/M_{j},i\in [M_{j}])),$ 
\end{cor}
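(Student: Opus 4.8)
The plan is to specialize the general compound Poisson representation of Proposition~\ref{bigmargBgen} to the Poisson spike and slab case and then recognize the resulting truncated jump-vectors as $\mathrm{MtP}$ variables via Proposition~\ref{multisample}. The mixed Poisson side of the claimed identity is nothing but Proposition~\ref{mixedPoisson}, so the real content is the compound Poisson form and the passage to the summed process. First I would invoke~(\ref{gencompoundPoissonrep}): conditionally on $B_{0}=\sum_{k}\tau_{k}\delta_{Y_{k}}$, the process equals $(\sum_{k=1}^{\infty}[\sum_{l=1}^{N_{j,k}}\hat{A}^{(i)}_{j,k,l}]\delta_{Y_{k}},i\in[M_{j}])$, where $(N_{j,k},j\in[J])\overset{d}=(\mathscr{P}_{j,k}(\psi_{j}(M_{j})\tau_{k}),j\in[J])$ and, for each $(j,k)$, the vectors $(\hat{A}^{(i)}_{j,k,l},i\in[M_{j}])$ are iid with the truncated law $\mathsf{S}_{j}(\cdot|\rho_{j})$ of~(\ref{generaljointtrunc}). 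In the present setting $G_{A_{j}}(\cdot|s)$ is the $\mathrm{Poisson}(r_{j}s)$ law, with spike $1-\pi_{A_{j}}(s)={\mbox e}^{-r_{j}s}$, so that~(\ref{genpsi}) gives $\psi_{j}(M_{j})=\int_{0}^{\infty}(1-{\mbox e}^{-sM_{j}r_{j}})\rho_{j}(s)ds$ as in~(\ref{psiPoisson}).

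The key step is to identify $\mathsf{S}_{j}(\cdot|\rho_{j})$ with $\mathrm{MtP}((r_{j},i\in[M_{j}]),\rho_{j})$. I would read this off Lemma~\ref{LemmaprelimjointB}: conditionally on $\tilde{H}_{j,\ell}=s$ the vector $(\hat{A}^{(i)}_{j,\ell},i\in[M_{j}])$ is a collection of $M_{j}$ independent $\mathrm{Poisson}(r_{j}s)$ variables conditioned to be not all zero, while $\tilde{H}_{j,\ell}$ has density proportional to $(1-{\mbox e}^{-M_{j}r_{j}s})\rho_{j}(s)$. This is precisely the description of $X_{0,k}|H_{k}=s$ and of $H_{k}$ in Proposition~\ref{multisample}(i)--(ii) under the relabelling in which the $M_{j}$ rows play the role of the $J$ groups, the common row rate $r_{j}$ replaces each group rate $\psi_{\cdot}(M_{\cdot})$ (so the rate-sum becomes $M_{j}r_{j}$), and the base Lévy density $\tau_{0}$ is replaced by $\rho_{j}$. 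Hence $\hat{A}^{(i)}_{j,k,l}=X^{(i)}_{j,k,l}$ with $X_{0,j,k,l}=(X^{(i)}_{j,k,l},i\in[M_{j}])\sim\mathrm{MtP}((r_{j},i\in[M_{j}]),\rho_{j})$, independent across $(j,k,l)$, which is the first displayed identity; its equivalence with the mixed Poisson form then follows from Proposition~\ref{mixedPoisson}.

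For the summed process I would sum the first displayed identity over $i\in[M_{j}]$, giving $\sum_{i=1}^{M_{j}}Z^{(i)}_{j}\overset{d}=\sum_{k}[\sum_{l=1}^{N_{j,k}}\tilde{X}_{j,k,l}]\delta_{Y_{k}}$ with $\tilde{X}_{j,k,l}=\sum_{i=1}^{M_{j}}X^{(i)}_{j,k,l}$. Because all $M_{j}$ row-rates coincide, the Multinomial probabilities in Proposition~\ref{multisample}(iii) reduce to $\pi_{i}=r_{j}/(M_{j}r_{j})=1/M_{j}$, so $X_{0,j,k,l}\mid\tilde{X}_{j,k,l}=a$ is $\mathrm{Multinomial}(a,(1/M_{j},i\in[M_{j}]))$, and the coordinate-sum property encoded in Proposition~\ref{multisample}(iv) gives $\tilde{X}_{j,k,l}\sim\mathrm{MtP}(M_{j}r_{j},\rho_{j})$; matching this against $\sum_{i}\mathscr{P}^{(i)}_{j,k}(r_{j}\sigma_{j,k}(\tau_{k}))=\mathscr{P}_{j,k}(M_{j}r_{j}\sigma_{j,k}(\tau_{k}))$ from Proposition~\ref{mixedPoisson} closes the argument.

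The main obstacle I anticipate is purely bookkeeping rather than probabilistic: making the relabelling of the previous paragraph airtight, i.e.\ verifying that Proposition~\ref{multisample} — stated for across-group sharing governed by $(\psi_{j}(M_{j}),\tau_{0})$ — applies verbatim to the within-group, row-level structure governed by $(r_{j},\rho_{j})$. Once the correspondence ($J\leftrightarrow M_{j}$, $\psi_{\cdot}(M_{\cdot})\leftrightarrow r_{j}$, $\tau_{0}\leftrightarrow\rho_{j}$) is checked against the densities in~(\ref{generaljointtrunc}) and~(\ref{jumpdensity1}), every remaining step is a direct citation of the quoted results together with the elementary Poisson superposition and Multinomial thinning identities.
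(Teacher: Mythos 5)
Your proposal is correct and follows essentially the same route the paper intends: the corollary is stated as a direct specialization of Proposition~\ref{bigmargBgen} (the paper gives no separate proof beyond the remark ``We first apply Proposition~\ref{bigmargBgen}\ldots''), and your identification of the truncated slab vectors $\mathsf{S}_{j}(\cdot|\rho_{j})$ with $\mathrm{MtP}((r_{j},i\in[M_{j}]),\rho_{j})$ via the relabelling $J\leftrightarrow M_{j}$, $\psi_{\cdot}(M_{\cdot})\leftrightarrow r_{j}$, $\tau_{0}\leftrightarrow\rho_{j}$ in Proposition~\ref{multisample} is exactly the intended reading. The bookkeeping you flag checks out against~(\ref{generaljointtrunc}) and~(\ref{genpsi}), so no gap remains.
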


We now apply Theorem~\ref{thmHIBPmarginalgen} to obtain the following important representations which allows for explicit sampling. In particular, the result provides a natural extension of the sampling approaches of~\cite{ZhouPadilla2016}, which involves the compound Poisson representation of the Negative Binomial distribution. See the forthcoming Section~\ref{gengammacount} for further details.  
\begin{cor}\label{cormixedPoissonsim}Consider the setting and specifications in~Corollary~\ref{PoissonCor1}. Then the unconditional distribution of the processes $((Z^{(i)}_{j},i\in [M_{j}]),j\in[J])$, can be expressed as,
\begin{equation}
((Z^{(i)}_{j},i\in [M_{j}]),j\in[J])\overset{d}=((\sum_{k=1}^{\xi(\varphi)}[\sum_{l=1}^{X_{j,k}}{X}^{(i)}_{j,k,l}]\delta_{\tilde{Y}_{k}},i\in [M_{j}]),j\in [J]),
\label{FullPoissHIBP}
\end{equation}
where $\xi(\varphi), (\tilde{Y}_{k})$ and $X_{0,k}=(X_{1,k},\ldots,X_{J,k})\sim \mathrm{MtP}((\psi_{j}(M_{j}),j\in [J]),\tau_{0}),$ are as in Proposition~\ref{multisample}. Hence the unconditional distribution of $(\sum_{i=1}^{M_{j}}Z^{(i)}_{j},j\in[J])$ has the representations
\begin{equation}
(\sum_{k=1}^{\infty}\mathscr{P}_{j,k}(M_{j}r_{j}\sigma_{j,k}(\tau_{k}))\delta_{Y_{k}}, j\in [J])\overset{d}=(\sum_{k=1}^{\xi(\varphi)}[\sum_{l=1}^{X_{j,k}}{\tilde{X}}_{j,k,l}]\delta_{\tilde{Y}_{k}},j\in [J]),
\label{FullPoissHIBPsum}
\end{equation}
where $\tilde{X}_{j,k,l}:=\sum_{i=1}^{M_{j}}{X}^{(i)}_{j,k,l}\sim \mathrm{MtP}(M_{j}r_{j},\rho_{j}),$ independent across $(j,k,l).$ 
\end{cor}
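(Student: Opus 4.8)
The plan is to obtain this corollary as a direct specialization of Theorem~\ref{thmHIBPmarginalgen}, whose general marginal representation already has exactly the displayed form, with the generic compound summands $\hat{A}^{(i)}_{j,k,l}$ drawn from the truncated joint law $\mathsf{S}_{j}(\mathbf{a}_{j,l}\mid\rho_{j})$ in~(\ref{generaljointtrunc}). The only substantive work is to identify what those summands become when $A_{j}\mid s\sim\mathrm{Poisson}(r_{j}s)$, and to confirm that the across-group backbone $((X_{0,k},\tilde{Y}_{k}),k\in[\xi(\varphi)])$ supplied by Proposition~\ref{multisample} couples correctly with that within-group structure.

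First I would specialize~(\ref{generaljointtrunc}) to the Poisson slab. Writing each $G_{A_{j}}(da^{(i)}\mid s)$ as the $\mathrm{Poisson}(r_{j}s)$ pmf, the vector numerator $\int_{0}^{\infty}[\prod_{i=1}^{M_{j}}\mathrm{Poisson}(a^{(i)}\mid r_{j}s)]\rho_{j}(s)\,ds$, restricted to $\mathbf{a}\notin\mathbf{0}$ and normalized by $\psi_{j}(M_{j})$, is by definition the zero-truncated multivariate mixed-Poisson law $\mathrm{MtP}((r_{j},i\in[M_{j}]),\rho_{j})$ of Proposition~\ref{multisample}. Hence $\hat{A}^{(i)}_{j,k,l}\overset{d}=X^{(i)}_{j,k,l}$ with $X_{0,j,k,l}=(X^{(i)}_{j,k,l},i\in[M_{j}])\sim\mathrm{MtP}((r_{j},i\in[M_{j}]),\rho_{j})$, which is precisely the identification already recorded in Corollary~\ref{PoissonCor1}. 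Substituting into the conclusion of Theorem~\ref{thmHIBPmarginalgen} yields the first displayed representation, with $\xi(\varphi)$, $(\tilde{Y}_{k})$ and $X_{0,k}\sim\mathrm{MtP}((\psi_{j}(M_{j}),j\in[J]),\tau_{0})$ inherited verbatim from Proposition~\ref{multisample}.

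For the summed process I would then aggregate over $i\in[M_{j}]$. Since a sum of independent Poisson pmfs is again Poisson in the pooled rate, the additivity already noted in Corollary~\ref{PoissonCor1} gives $\tilde{X}_{j,k,l}:=\sum_{i=1}^{M_{j}}X^{(i)}_{j,k,l}\sim\mathrm{MtP}(M_{j}r_{j},\rho_{j})$, and interchanging the finite inner sums $\sum_{i}\sum_{l\le X_{j,k}}$ produces~(\ref{FullPoissHIBPsum}). The conditional Multinomial splitting of $X_{0,j,k,l}$ given $\tilde{X}_{j,k,l}$ follows from the same Fisher--Soper Poisson--Multinomial relation invoked in Proposition~\ref{mixedPoisson}.

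The only place requiring genuine care is this first step: verifying that the generic truncated law~(\ref{generaljointtrunc}) collapses \emph{exactly} onto the $\mathrm{MtP}$ object of Proposition~\ref{multisample}, not merely a law with the same support. The two layers---the across-group atom-count backbone governed by $\tau_{0}$, which yields $\xi(\varphi)$ and $X_{0,k}$, and the within-group compound vectors governed by $\rho_{j}$, which yield the $X^{(i)}_{j,k,l}$---are mutually independent because the summands in~(\ref{generaljointtrunc}) do not depend on $\lambda$, as emphasized in Corollary~\ref{corjointB}. This independence is what licenses reading off both pieces from their respective propositions without any residual coupling, so that the substitution into Theorem~\ref{thmHIBPmarginalgen} is exact.
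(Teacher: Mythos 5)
Your proposal is correct and follows essentially the same route the paper intends: Corollary~\ref{cormixedPoissonsim} is obtained by specializing Theorem~\ref{thmHIBPmarginalgen} to the Poisson slab, identifying the generic truncated summands of~(\ref{generaljointtrunc}) with the $\mathrm{MtP}((r_{j},i\in[M_{j}]),\rho_{j})$ vectors already recorded in Corollary~\ref{PoissonCor1} (and made explicit in~(\ref{jointMjPoisson})), and then summing over $i$ to get~(\ref{FullPoissHIBPsum}). Your verification that the integral in~(\ref{generaljointtrunc}) collapses exactly to the $\mathrm{MtP}$ law, and your remark that the $\tau_{0}$-governed backbone decouples from the $\rho_{j}$-governed summands, are precisely the points the paper relies on implicitly.
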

In terms of observations $X_{0,j,k,l}:=(\tilde{X}^{(i)}_{j,k,l},i\in [M_{j}])=(a^{(i)}_{j,k,l},i\in [M_{j}]),$ with $\tilde{X}_{j,k,l}=\sum_{i=1}^{M_{j}}a^{(i)}_{j,k,l}:=a_{j,k,l}>0.$  $(X_{1,k}=n_{1,k},\ldots,X_{J,k}=n_{J,k})$ with $\tilde{X}_{k}=\sum_{j=1}^{J}n_{j,k}=n_{k}>0.,$ leading to $((\sum_{l=1}^{n_{j,k}}a^{(i)}_{j,k,l},i\in [M_{j}]),j\in[J])$ and 
$(\sum_{l=1}^{n_{j,k}}a_{j,k,l},j\in[J])$ as counts for each $\tilde{Y}_{k},$ for $k\in[r],$ where $\xi(\phi)=r.$ In this case,  $S_{j,k,l}$ has density 
\begin{equation}
\vartheta(s|\mathbf{a}_{j,k,l},\rho_{j})=\eta(s|a_{j,k,l},M_{j}r_{j},\rho_{j})
\propto s^{a_{j,k,l}}{\mbox e}^{-M_{j}r_{j}s}\rho_{j}(s)
\label{Poissongenjump}
\end{equation}
and hence are in the same family of distributions as the jumps $(\tilde{L}_{k})$ of $B_{0}$ specified in~(\ref{JumpsDist1}). Furthermore, 
in regards to the calculation of the marginal distribution in  Proposition~\ref{marginalslabHIBP}, it follows that now $\mathbf{S}_{[J]}(\mathbf{a})$ in (\ref{Alikelihood}) has components,

\begin{equation}
\mathsf{S}_{j}(\mathbf{a}_{j,k,l}|\rho_{j})=\frac{a_{j,k,l}!}{\prod_{i=1}^{M_{j}}a^{(i)}_{j,k,l}!}
{(\frac{1}{M_{j}})}^{a_{j,k,l}}
\times\tilde{p}(a_{j,k,l}|M_{j}r_{j},\rho_{j})
\label{jointMjPoisson}
\end{equation}
corresponding to a $\mathrm{MtP}((r_{j},i\in [M_{j}]),\rho_{j}),$ distribution. We now apply Proposition~\ref{postgenpredict} to obtain descriptions of the prediction rule.
\begin{cor}\label{corPoissonpredict}
Consider the general specifications in Proposition~
\ref{postgenpredict}. Then the Poisson case of $A_{j}|s\sim Poisson(r_{j}s)$ for each $j$ yields prediction rules of the form
\begin{equation}
\hat{Z}_{j}+\sum_{k=1}^{r}\left[\mathscr{P}_{j,k}(r_{j}\sum_{l=1}^{n_{j,k}}S_{j,k,l})\right]\delta_{\tilde{Y}_{k}},
\label{PoissonpredictAMj}
\end{equation}
where,
$\hat{Z}_{j}|((Z^{(i)}_{l},i \in [M_{j}]),l\in[J]),$ can be represented as
\begin{equation}
\sum_{k=1}^{\tilde{\xi}_{j}(\phi_{j})}\left[\sum_{l=1}^{\tilde{X}_{j,k}}
\tilde{X}_{j,k+r,l}\right]\delta_{Y_{j,k}}+
\sum_{k=1}^{r}\left[\sum_{l=1}^{\tilde{N}_{j,k}}
\tilde{X}_{j,k,l}\right]\delta_{\tilde{Y}_{k}}
\label{PoissonnewpartpredictA}
\end{equation}
where now 
 $((\tilde{X}_{j,t,l},t\in[r+\tilde{\xi}_{j}(\phi_{j})]))$ are collections of iid variables each with distribution 
$\mathrm{MtP}(r_{j},\rho_{j,M_{j}}),$ for $\rho_{j,M_{j}}(s)={\mbox e}^{-M_{j}r_{j}s}\rho_{j}(s).$ The case of $Z^{(1)}_{J+1}|((Z^{(i)}_{l},i \in [M_{j}]),l\in[J]),$
is equivalent to (\ref{PoissonnewpartpredictA}) with $\gamma_{J+1,1} $ in place of $\gamma_{j,M_{j}+1},$ and
 $((\tilde{X}_{j,t,l},t\in[r+\tilde{\xi}_{j}(\phi_{j})]))$ iid
$\mathrm{MtP}(r_{J+1},\rho_{J+1}).$
\end{cor}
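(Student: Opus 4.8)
The plan is to specialize the general predictive representation of Proposition~\ref{postgenpredict} to the choice $A_{j}|s\sim\mathrm{Poisson}(r_{j}s)$, treating the two summands of~(\ref{predictAMj}) separately and inheriting as much of the generic structure as possible. First I would handle the re-selection term $\sum_{k=1}^{r}[\sum_{l=1}^{n_{j,k}}A_{j,k,l}]\delta_{\tilde{Y}_{k}}$. By Proposition~\ref{postgenpredict} the $A_{j,k,l}$ are conditionally independent with $A_{j,k,l}|S_{j,k,l}=s$ distributed as $G_{A_{j}}(da|s)$, which here is $\mathrm{Poisson}(r_{j}s)$. Conditioning on the posterior jumps $(S_{j,k,l},l\in[n_{j,k}])$ and invoking superposition of independent Poisson variables, $\sum_{l=1}^{n_{j,k}}A_{j,k,l}$ is $\mathrm{Poisson}(r_{j}\sum_{l=1}^{n_{j,k}}S_{j,k,l})$, which is exactly the variable $\mathscr{P}_{j,k}(r_{j}\sum_{l=1}^{n_{j,k}}S_{j,k,l})$ appearing in~(\ref{PoissonpredictAMj}). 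The jumps $S_{j,k,l}$ retain the density~(\ref{jumpdensity1}) from Proposition~\ref{marginalslabHIBP}, which in the Poisson case is the exponentially tilted form~(\ref{Poissongenjump}).

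The second, and main, step is to identify the generic truncated slab law~(\ref{generalunivariatetrunc}) with the mixed-truncated-Poisson distribution $\mathrm{MtP}(r_{j},\rho_{j,M_{j}})$, where $\rho_{j,M_{j}}(s)=\mathrm{e}^{-M_{j}r_{j}s}\rho_{j}(s)$. I would substitute the Poisson slab $G_{A_{j}}(da|s)=(r_{j}s)^{a}\mathrm{e}^{-r_{j}s}/a!$ for $a\ge 1$ into the numerator of~(\ref{generalunivariatetrunc}), obtaining up to $a!$ the quantity $r_{j}^{a}\int_{0}^{\infty}s^{a}\mathrm{e}^{-(M_{j}+1)r_{j}s}\rho_{j}(s)\,ds=r_{j}^{a}\int_{0}^{\infty}s^{a}\mathrm{e}^{-r_{j}s}\rho_{j,M_{j}}(s)\,ds$, which is precisely the single-group numerator of~(\ref{mixedtruncatedPoissonpmf}) with $\kappa=r_{j}$ and base density $\rho_{j,M_{j}}$. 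For the denominator I would verify that $\gamma_{j,M_{j}+1}=\psi_{j}(M_{j}+1)-\psi_{j}(M_{j})$ coincides with the $\mathrm{MtP}$ normalizer $\int_{0}^{\infty}(1-\mathrm{e}^{-r_{j}s})\rho_{j,M_{j}}(s)\,ds=\int_{0}^{\infty}(\mathrm{e}^{-M_{j}r_{j}s}-\mathrm{e}^{-(M_{j}+1)r_{j}s})\rho_{j}(s)\,ds$, using the Poisson-specific expression~(\ref{psiPoisson}) for $\psi_{j}$. This matching shows that the iid atoms $\tilde{A}_{j,t,l}$ of~(\ref{newpartpredictA}) are exactly $\mathrm{MtP}(r_{j},\rho_{j,M_{j}})$ variables.

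Consequently $\hat{Z}_{j}$ takes the form~(\ref{PoissonnewpartpredictA}): the Poisson counts $\tilde{N}_{j,k}\overset{d}=\mathscr{P}_{j,k}(\gamma_{j,M_{j}+1}\tilde{L}_{k})$, the new-atom Poisson count $\tilde{\xi}_{j}(\phi_{j})$ with mean $\phi_{j}=G_{0}(\Omega)\tilde{\psi}_{0,J}(\gamma_{j,M_{j}+1})$, and the iid $\tilde{X}_{j,k}\sim\mathrm{MtP}(\gamma_{j,M_{j}+1},\tau_{0,J})$ are all carried over unchanged from Proposition~\ref{postgenpredict}, with only the iid truncated slab variables replaced by $\tilde{X}_{j,t,l}\sim\mathrm{MtP}(r_{j},\rho_{j,M_{j}})$ by the identification above. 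The statement for a previously unseen group $Z^{(1)}_{J+1}$ then follows immediately from the corollary following Proposition~\ref{postgenpredict} by setting $j=J+1$ and $M_{J+1}+1=1$ (equivalently $M_{J+1}=0$), so that $\gamma_{j,M_{j}+1}$ is replaced by $\gamma_{J+1,1}$ and $\rho_{j,M_{j}}$ by $\rho_{J+1}$, yielding $\tilde{X}_{j,t,l}\sim\mathrm{MtP}(r_{J+1},\rho_{J+1})$.

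I would expect the only genuine obstacle to be the identification in the second step, namely verifying that both the numerator integral and the normalizing constant of~(\ref{generalunivariatetrunc}) collapse to the $\mathrm{MtP}(r_{j},\rho_{j,M_{j}})$ form. The normalizer identity is the crux: it hinges on rewriting $\gamma_{j,M_{j}+1}$ through the Poisson-specific expression~(\ref{psiPoisson}), after which everything aligns with the definitions in Proposition~\ref{multisample}. The remaining bookkeeping, namely Poisson superposition for the re-selection term and the transfer of the Poisson and $\mathrm{MtP}$ counts unchanged, is routine.
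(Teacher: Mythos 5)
Your proposal is correct and follows essentially the same route as the paper, which states this corollary as a direct specialization of Proposition~\ref{postgenpredict} without a separate written proof. The two ingredients you isolate — Poisson superposition for the re-selection term, and the identification of the truncated slab law~(\ref{generalunivariatetrunc}) with $\mathrm{MtP}(r_{j},\rho_{j,M_{j}})$ via the normalizer identity $\gamma_{j,M_{j}+1}=\int_{0}^{\infty}(1-{\mbox e}^{-r_{j}s})\rho_{j,M_{j}}(s)ds$ from~(\ref{psiPoisson}) — are exactly the computations the paper intends the reader to carry out.
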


\subsubsection{Poisson generalized gamma cases}\label{gengammacount}
As in~Section~\ref{Gengamma}
we suppose that $B_{0}$ is $\mathrm{GG}(\alpha,\zeta;\theta_{0}G_{0}),$ and we  choose 
$\mu_{j}|B_{0}\overset{ind}\sim \mathrm{GG}(\alpha_{j},\zeta_{j};\theta_{j}B_{0}),$ for $j\in[J].$ Then,
$$
\psi_{j}(M_{j})=\theta_{j}\tilde{\psi}_{\alpha_{j},\zeta_{j}}(M_{j}r_{j}),
$$
as in (\ref{rfunction}), for $j\in [J],$   and
$\tilde{X}_{j,k,l}:=\sum_{i=1}^{M_{j}}{X}^{(i)}_{j,k,l}\sim \mathrm{MtP}(M_{j}r_{j},\rho_{j}),$ has pmf $p_{\alpha_{j}}(m|M_{j}r_{j},\zeta_{j})$ 
for $j\in [J],$  as in~(\ref{Xfunction}).
When $\alpha=0,$ and $\alpha_{j}=0$ for $j\in[J],$ the processes in $(\ref{FullPoissHIBPsum})$ coincide with the generation of Gamma-Negative Binomial random count matrices as described in~\cite[Section 2.2]{ZhouPadilla2016}. In fact the sampling algorithms proposed in \cite{ZhouPadilla2016}, in those 
cases, equate with the right hand expression in 
(\ref{FullPoissHIBPsum}), where as derived and exploited in~\cite{ZhouCarin2015, ZhouPadilla2016}, there is a closed form expression for the probability mass function of 
$\sum_{l=1}^{X_{j,k}}{\tilde{X}}_{j,k,l}$ when the $({\tilde{X}}_{j,k,l})$ have iid Logarithmic series distributions. See ~\cite[Supplement]{ZhouFoF} for some relevant details for $\alpha_{j}\neq 0.$ However, sampling in this more general case, which allows for modelling more general data structures, is made straightforward by (\ref{FullPoissHIBPsum}). This is of course one sub-component of the more complex structure in 
Corollary~\ref{cormixedPoissonsim}. Additionally, for fixed $j,$ when $\alpha_{j}=0,$ $(X_{0,j,k,l}:=(\tilde{X}^{(i)}_{j,k,l},i\in [M_{j}]),l\in [n_{j,k}],k\in[r])$ 
with $X_{j,k}=n_{j,k}$ and $\xi(\varphi)=r,$ generate random count matrices in~\cite[eq. (6)]{ZhouPadilla2016}, with $M_{j}$ in place of $J.$ 
\begin{cor} Consider the specifications in this section. Then the marginal distribution of $((Z^{(i)}_{j},i\in [M_{j}]),j\in [J])$ is explicit and follows the descriptions in Proposition~\ref{marginalslabHIBP} and Corollary~\ref{altmargslab} by, (\ref {GGBlikelihood}) and 
$$
\Delta(\mathbf{a})=\prod_{j=1}^{J}\frac{\theta^{d_{j}}_{j}r^{a_{j,\cdot}}_{j}}{{(r_{j}M_{j}+\zeta_{j})}^{a_{j,\cdot}-\alpha_{j}d_{j}}}\prod_{k=1}^{r}\prod_{l=1}^{n_{j,k}}\frac{\Gamma(a_{j,k,l}-\alpha_{j})}{\Gamma(1-\alpha_{j})\prod_{i=1}^{M_{j}}a^{(i)}_{j,k,l}!}
$$
where $\sum_{k=1}^{r}\sum_{l=1}^{n_{j,k}}a_{j,k,l}=a_{j,\cdot},$ and $a_{j,k,l}\in\{1,2,\ldots,\}$ for $n_{j,k}\neq 0.$
\end{cor}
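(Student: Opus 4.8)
The plan is to verify the explicit form of $\Delta(\mathbf{a})$ by direct computation, since the remaining factors in (\ref{multiECPF2}) are already supplied by (\ref{GGBlikelihood}) together with the formula $\psi_j(M_j)=\theta_j\tilde{\psi}_{\alpha_j,\zeta_j}(M_j r_j)$. Everything else is inherited from Proposition~\ref{marginalslabHIBP} and Corollary~\ref{altmargslab}, so the only genuinely new content is the evaluation of $\Delta(\mathbf{a})=\prod_{j=1}^J\prod_{\ell=1}^{d_j}\int_0^\infty[\prod_{i=1}^{M_j}G_{A_j}(da^{(i)}_{j,\ell}|s)]\rho_j(s)ds$ in the present Poisson generalized gamma setting.

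First I would substitute the Poisson slab $G_{A_j}(da^{(i)}_{j,\ell}|s)=(r_j s)^{a^{(i)}_{j,\ell}}e^{-r_j s}/a^{(i)}_{j,\ell}!$ and the generalized gamma L\'evy density $\rho_j(s)=\tilde{\tau}_{\alpha_j}(s|\zeta_j,\theta_j)$ from (\ref{GGLevy}) into the inner integral for a fixed $(j,\ell)$. Writing $a_{j,\ell}=\sum_{i=1}^{M_j}a^{(i)}_{j,\ell}$, the product over $i$ collapses $\prod_i(r_j s)^{a^{(i)}_{j,\ell}}$ to $r_j^{a_{j,\ell}}s^{a_{j,\ell}}$ and $\prod_i e^{-r_j s}$ to $e^{-M_j r_j s}$, so after combining with the $s^{-\alpha_j-1}e^{-\zeta_j s}$ factor the integrand is a constant times $s^{a_{j,\ell}-\alpha_j-1}e^{-(M_j r_j+\zeta_j)s}$.

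Next I would evaluate the resulting gamma integral $\int_0^\infty s^{a_{j,\ell}-\alpha_j-1}e^{-(M_j r_j+\zeta_j)s}ds=\Gamma(a_{j,\ell}-\alpha_j)/(M_j r_j+\zeta_j)^{a_{j,\ell}-\alpha_j}$, which converges since $a_{j,\ell}\ge 1$ (because $\mathbf{a}_{j,\ell}\notin\mathbf{0}$) and $\alpha_j<1$. This gives the single-atom contribution $\theta_j r_j^{a_{j,\ell}}\Gamma(a_{j,\ell}-\alpha_j)/[\Gamma(1-\alpha_j)(M_j r_j+\zeta_j)^{a_{j,\ell}-\alpha_j}\prod_i a^{(i)}_{j,\ell}!]$.

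Finally I would take the product over $\ell\in[d_j]$ and $j\in[J]$, relabelling $(\mathbf{a}_{j,\ell},\ell\in[d_j])$ as $(\mathbf{a}_{j,k,l},l\in[n_{j,k}],k\in[r])$ as in (\ref{Alikelihood}). The exponents accumulate additively: $\prod_\ell r_j^{a_{j,\ell}}=r_j^{a_{j,\cdot}}$, $\prod_\ell\theta_j=\theta_j^{d_j}$, and $\prod_\ell(M_j r_j+\zeta_j)^{a_{j,\ell}-\alpha_j}=(M_j r_j+\zeta_j)^{a_{j,\cdot}-\alpha_j d_j}$ with $a_{j,\cdot}=\sum_{k,l}a_{j,k,l}$, which yields precisely the claimed expression. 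The computation is routine; the only point requiring care is the bookkeeping between the two indexing conventions and carrying the $\prod_i a^{(i)}_{j,k,l}!$ Poisson normalizers correctly through the product, but no genuine obstacle arises.
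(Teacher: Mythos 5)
Your computation is correct and is exactly the calculation the paper intends: the corollary is stated without a separate proof precisely because it reduces to substituting the Poisson kernel and the generalized gamma L\'evy density into $\Delta(\mathbf{a})$ from Corollary~\ref{altmargslab} and evaluating the resulting gamma integrals, which is what you do. Your bookkeeping of the exponents, the convergence remark ($a_{j,\ell}\ge 1$, $\alpha_j<1$), and the relabelling between the $(j,\ell)$ and $(j,k,l)$ indexing all match the paper's conventions.
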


In terms of the prediction rule in Corollary~\ref{corPoissonpredict}, $S_{j,k,l}\overset{ind}\sim\mathrm{Gamma}(a_{j,k,l}-\alpha_{j},M_{j}r_{j}+\zeta_{j}),$ and hence  
$\sum_{l=1}^{n_{j,k}}S_{j,k,l}\overset{ind}\sim \mathrm{Gamma}(a_{j,k}-n_{j,k}\alpha_{j},M_{j}r_{j}+\zeta_{j}),$  
for $a_{j,k}:=\sum_{l=1}^{n_{j,k}}a_{j,k,l}.$ Hence
$\mathscr{P}_{j,k}(r_{j}\sum_{l=1}^{n_{j,k}}S_{j,k,l})\sim \mathrm{NB}(a_{j,k}-n_{j,k}\alpha_{j},p_{j}),$ for $p_{j}=r_{j}/((M_{j}+1)r_{j}+\zeta_{j}),$ and $((\tilde{X}_{j,t,l},t\in[r+\tilde{\xi}_{j}(\phi_{j})]))$ are collections of iid variables each with distribution 
 $p_{\alpha_{j}}(m|r_{j},\zeta_{j}+M_{j}r_{j}).$ 
\subsection{Results for Bernoulli HIBP}\label{HIBPresults}
We now return to the Bernoulli case, where we will encounter calculations certainly well known in the literature, see for example~\cite[Section 4.1]{James2017}. Here, 
$(A^{(1)}_{j},\ldots,A^{(M_{j})}_{j})|p$ have joint probability mass function $p^{m_{j}}{(1-p)}^{M_{j}-m_{j}}$, where $m_{j}=\sum_{i=1}^{M_{j}}a^{(i)}_{j},$ for $a^{(i)}_{j}\in{\{0,1\}},$ $i\in[M_{j}].$ Hence $\sum_{i=1}^{M_{j}}A^{(i)}_{j}|p\sim \mathrm{Binomial}(M_{j},p),$ and it follows that the distribution of  $(\hat{A}^{(1)}_{j,k,l},\ldots,\hat{A}^{(M_{j})}_{j,k,l})$ is, for $m_{j,k,l}:=\sum_{i=1}^{M_{j}}a^{(i)}_{j,k,l}\in [M_{j}],$
\begin{equation}
\mathsf{S}_{j}(\mathbf{a}_{j,k,l}|\rho_{j})=b_{j}(m_{j,k,l}|\rho_{j})
=\frac{\int_{0}^{1}p^{m_{j,k,l}}{(1-p)}^{M_{j}-m_{j,k,l}}\rho_{j}(p)dp}{\psi_{j}(M_{j})}
\label{truncatedMBernoulli}
\end{equation}
This leads to an interesting relationship to multivariate Hypergeometric distributions.
\begin{cor}\label{jointtruncbernoulli}
The vector $(\hat{A}^{(1)}_{j,k,l},\ldots,\hat{A}^{(M_{j})}_{j,k,l})$ with distribution~(\ref{truncatedMBernoulli}) may be described as follows. The $\sum_{i=1}^{M_{j}}\hat{A}^{(i)}_{j,k,l}$ has a mixed zero truncated Binomial distribution with probability mass function 
$$
\mathbb{P}(\sum_{i=1}^{M_{j}}\hat{A}^{(i)}_{j,k,l}=m)=\binom{M_{j}}{m}b_{j}(m|\rho_{j})
$$
for $m\in[M_{j}].$
Hence 
$(\hat{A}^{(1)}_{j,k,l},\ldots,\hat{A}^{(M_{j})}_{j,k,l})|\sum_{i=1}^{M_{j}}\hat{A}^{(i)}_{j,k,l}=m
$ has joint probability mass function $1/\binom{M_{j}}{m},$ for $\sum_{i=1}^{M_{j}}a^{(i)}_{j,k,l}=m.$ This corresponds to a simple multivariate Hypergeometric distribution.
\end{cor}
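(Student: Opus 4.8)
The plan is to exploit that the joint mass function in~(\ref{truncatedMBernoulli}) depends on the binary vector $(a^{(1)}_{j,k,l},\ldots,a^{(M_j)}_{j,k,l})$ only through its sum $m_{j,k,l}=\sum_{i=1}^{M_j}a^{(i)}_{j,k,l}$, after which everything reduces to elementary counting. First I would observe that, since $\mathsf{S}_j(\mathbf{a}_{j,k,l}|\rho_j)=b_j(m_{j,k,l}|\rho_j)$ takes a common value on the set of binary vectors sharing a given sum $m$, and there are exactly $\binom{M_j}{m}$ such vectors, the marginal mass function of $\sum_{i=1}^{M_j}\hat{A}^{(i)}_{j,k,l}$ is obtained by summing $b_j(m|\rho_j)$ over these configurations, giving $\binom{M_j}{m}b_j(m|\rho_j)$. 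The restriction $m\in[M_j]$, rather than $m\in\{0,1,\ldots,M_j\}$, is forced by the indicator $\mathbb{I}_{\{\mathbf{a}_{j,\ell}\notin\mathbf{0}\}}$ built into~(\ref{generaljointtrunc}), which excludes the all-zero vector and hence $m=0$.

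Next I would confirm that this is a genuine probability mass function, which is the only computation of substance. Interchanging sum and integral in the definition of $b_j$ and applying the binomial theorem,
\[
\sum_{m=1}^{M_j}\binom{M_j}{m}b_j(m|\rho_j)=\frac{1}{\psi_j(M_j)}\int_0^1\Bigl[\sum_{m=1}^{M_j}\binom{M_j}{m}p^m(1-p)^{M_j-m}\Bigr]\rho_j(p)\,dp=\frac{1}{\psi_j(M_j)}\int_0^1\bigl(1-(1-p)^{M_j}\bigr)\rho_j(p)\,dp,
\]
which equals $1$ by the definition~(\ref{genpsi}) of $\psi_j(M_j)$ in the Bernoulli case, where $\pi_{A_j}(p)=p$. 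This both verifies normalization and exhibits the mixed, zero-truncated Binomial structure claimed for $\sum_{i=1}^{M_j}\hat{A}^{(i)}_{j,k,l}$.

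Finally, the conditional law follows immediately: since $\mathsf{S}_j(\mathbf{a}_{j,k,l}|\rho_j)$ equals the common value $b_j(m|\rho_j)$ on each of the $\binom{M_j}{m}$ vectors with $\sum_{i}a^{(i)}_{j,k,l}=m$, dividing the joint mass by the marginal $\binom{M_j}{m}b_j(m|\rho_j)$ yields the constant $1/\binom{M_j}{m}$ on exactly those configurations. A uniform distribution over the weight-$m$ vectors in $\{0,1\}^{M_j}$ is precisely the simple multivariate Hypergeometric distribution describing which $m$ of the $M_j$ coordinates equal one, drawn without replacement, which completes the identification. There is no genuine obstacle here; the single point requiring care is to track the lower summation limit $m\ge 1$ induced by the spike-excluding indicator, so that the normalization recovers $\psi_j(M_j)$ rather than the full (possibly divergent) integral $\int_0^1\rho_j(p)\,dp$.
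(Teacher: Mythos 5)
Your proposal is correct and follows exactly the argument the paper leaves implicit: the corollary is stated without a separate proof precisely because the joint mass function~(\ref{truncatedMBernoulli}) depends on the binary vector only through its sum, so the marginal of the sum and the uniform conditional law follow by counting the $\binom{M_{j}}{m}$ configurations, with normalization recovered from the binomial theorem and the Bernoulli-case identity $\psi_{j}(M_{j})=\int_{0}^{1}(1-(1-p)^{M_{j}})\rho_{j}(p)\,dp$ from~(\ref{genpsi}). Your attention to the zero-truncation $m\ge 1$ forced by the indicator in~(\ref{generaljointtrunc}) is exactly the one point worth making explicit.
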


The jumps $S_{j,k,l}$ have density proportional to $p^{m_{j,k,l}}{(1-p)}^{M_{j}-m_{j,k,l}}\rho_{j}(p).$ If $\mu_{j}|B_{0}\overset{ind}\sim\mathrm{sBP}((\alpha_{j},\beta_{j}),\theta_{j},B_{0}),$ then 
$S_{j,k,l}\overset{ind}\sim \mathrm{Beta}(m_{j,k,l}-\alpha_{j},M_{j}-m_{j,k,l}+\beta_{j}+\alpha_{j}).$ In the simplest case of the basic Bernoulli HIBP, $\mu_{j}|B_{0}\overset{ind}\sim\mathrm{BP}(\theta_{j},B_{0}),$ one has $\psi_{j}(M_{j})=\theta_{j}\sum_{i=1}^{M_{j}}1/i,$ 
and
$
\mathbb{P}(\sum_{i=1}^{M_{j}}\hat{A}^{(i)}_{j,k,l}=m)=\frac{1/m}{\sum_{k=1}^{M_{j}}1/k}
$ for $m\in [M_{j}].$ In the case where  $\mu_{j}|B_{0}\overset{ind}\sim\mathrm{sBP}((\alpha_{j},\beta_{j}),\theta_{j},B_{0}),$ 
$$
\psi_{j}(M_{j})=\theta_{j}\sum_{i=1}^{M_{j}}\frac{\Gamma(1-\alpha_{j})\Gamma(\beta_{j}+\alpha_{j}+i-1)}{\Gamma(\beta_{j}+i)}.
$$
We now give an explicit description of the marginal distribution.
\begin{cor}\label{Bernoullimarg} Consider the specifications in this section where $\mu_{j}|B_{0}\overset{ind}\sim\mathrm{sBP}((\alpha_{j},\beta_{j}),\theta_{j},B_{0}),$ and $B_{0}\sim \mathrm{GG}(\alpha,\zeta; \theta_{0}G_{0}).$ Then the marginal distribution, in the Bernoulli case, of $((Z^{(i)}_{j},i\in [M_{j}]),j\in [J])$ is explicit and follows the descriptions in Proposition~\ref{marginalslabHIBP} and Corollary~\ref{altmargslab} by, (\ref {GGBlikelihood}) and 
$$
\Delta(\mathbf{a})=\prod_{j=1}^{J}{\theta^{d_{j}}_{j}}\prod_{k=1}^{r}\prod_{l=1}^{n_{j,k}}\frac{\Gamma(m_{j,k,l}-\alpha_{j})\Gamma(M_{j}-m_{j,k,l}+\alpha_{j}+\beta_{j})}{\Gamma(M_{j}+\beta_{j})}
$$
\end{cor}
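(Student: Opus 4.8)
The plan is to invoke Corollary~\ref{altmargslab}, which already writes the marginal of $((Z^{(i)}_{j},i\in [M_{j}]),j\in [J])$ as a product of $\Delta(\mathbf{a})$, the factor $n!\varrho(\mathbf{n}_{r};n|\gamma_{0},\tau_{0},\sum_{j=1}^{J}\psi_{j}(M_{j}))$, and the atom contributions $\prod_{k=1}^{r}\bar{G}_{0}(d\tilde{Y}_{k})$, all divided by ${[\sum_{j=1}^{J}\psi_{j}(M_{j})]}^{n}\prod_{j,k}n_{j,k}!$. Because $B_{0}\sim\mathrm{GG}(\alpha,\zeta;\theta_{0}G_{0})$, the factor $n!\varrho$ is already explicit via~(\ref{GGBlikelihood}). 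Hence the entire task reduces to evaluating $\Delta(\mathbf{a})=\prod_{j=1}^{J}\prod_{\ell=1}^{d_{j}}\int_{0}^{\infty}[\prod_{i=1}^{M_{j}}G_{A_{j}}(da^{(i)}_{j,\ell}|s)]\rho_{j}(s)ds$ in closed form.

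First I would substitute the two Bernoulli-and-stable-Beta ingredients into a single generic factor. In the Bernoulli case the joint conditional pmf recorded at the start of Section~\ref{HIBPresults} gives $\prod_{i=1}^{M_{j}}G_{A_{j}}(da^{(i)}_{j,\ell}|p)=p^{m_{j,\ell}}{(1-p)}^{M_{j}-m_{j,\ell}}$ with $m_{j,\ell}=\sum_{i=1}^{M_{j}}a^{(i)}_{j,\ell}$, while $\mu_{j}|B_{0}\sim\mathrm{sBP}((\alpha_{j},\beta_{j}),\theta_{j},B_{0})$ forces $\rho_{j}(p)=\rho_{\alpha_{j},\beta_{j}}(p)=\theta_{j}p^{-\alpha_{j}-1}{(1-p)}^{\beta_{j}+\alpha_{j}-1}\indic_{\{0<p<1\}}$ by~(\ref{stablebetauni}). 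Thus, writing $m=m_{j,\ell}$, each factor becomes a Beta integral
\begin{equation*}
\theta_{j}\int_{0}^{1}p^{m-\alpha_{j}-1}{(1-p)}^{M_{j}-m+\beta_{j}+\alpha_{j}-1}dp=\theta_{j}\frac{\Gamma(m-\alpha_{j})\Gamma(M_{j}-m+\beta_{j}+\alpha_{j})}{\Gamma(M_{j}+\beta_{j})},
\end{equation*}
where the two shape parameters sum to $M_{j}+\beta_{j}$. Convergence is guaranteed because every selected vector $\mathbf{a}_{j,\ell}$ has at least one nonzero entry (so $1\le m\le M_{j}$) and the ranges $0\le\alpha_{j}<1$, $\beta_{j}>-\alpha_{j}$ make both shape parameters strictly positive.

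Finally I would assemble the product. Using the index correspondence $(\mathbf{a}_{j,\ell},\ell\in[d_{j}])=(\mathbf{a}_{j,k,l},l\in[n_{j,k}],k\in[r])$ from Theorem~\ref{thmHIBPmarginalgen}, taking the product over the $d_{j}$ selected vectors for each $j$ collects the constant $\theta_{j}^{d_{j}}$ and re-indexes the Gamma ratios to yield exactly the claimed $\Delta(\mathbf{a})$; multiplying over $j\in[J]$ completes the identification, and combining with~(\ref{GGBlikelihood}) gives the fully explicit marginal.

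Since each step is a standard Beta-function evaluation, the computation itself presents no real difficulty; the only points requiring care are the bookkeeping that converts the $\ell$-indexing of Corollary~\ref{altmargslab} into the $(k,l)$-indexing of the statement, and the verification that the $\mathrm{sBP}$ parameter ranges keep every Beta integral finite. I expect this index-matching to be the main, albeit minor, obstacle.
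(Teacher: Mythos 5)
Your proposal is correct and follows exactly the route the paper intends: the corollary is stated as a direct consequence of Corollary~\ref{altmargslab}, with the only computation being the Beta integral $\theta_{j}\int_{0}^{1}p^{m-\alpha_{j}-1}(1-p)^{M_{j}-m+\beta_{j}+\alpha_{j}-1}dp=\theta_{j}\Gamma(m-\alpha_{j})\Gamma(M_{j}-m+\alpha_{j}+\beta_{j})/\Gamma(M_{j}+\beta_{j})$ applied to each of the $d_{j}$ selected vectors, which is precisely what you do. Your added remarks on finiteness (using $m\ge 1$, $0\le\alpha_{j}<1$, $\beta_{j}>-\alpha_{j}$) and on the $\ell\leftrightarrow(k,l)$ re-indexing are correct and consistent with the paper's conventions.
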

\subsubsection{The Bernoulli HIBP prediction rule}
We now specialize Proposition~\ref{postgenpredict} to arrive at a fairly simple description of the predictive distribution in the Bernoulli HIBP case.
\begin{cor}\label{bergenpredict}
Consider the specifications in Proposition~\ref{postgenpredict} for $A_{j}|p\sim \mathrm{Bernoulli}(p)$
then for  any $j\in [J],$ the predictive distribution of $Z^{(M_{j}+1)}_{j}$ given $((Z^{(i)}_{\ell},i\in [M_{\ell}]),\ell\in [J]),$ has the representation
\begin{equation}
\sum_{k=1}^{\tilde{\xi}_{j}(\phi_{j})}\tilde{X}_{j,k}\delta_{Y_{j,k}}+
\sum_{k=1}^{r}\tilde{N}_{j,k}\delta_{\tilde{Y}_{k}}+\sum_{k=1}^{r}\left[\sum_{l=1}^{n_{j,k}}A_{j,k,l}\right]\delta_{\tilde{Y}_{k}},
\label{predictBerj}
\end{equation}
where,
 $(A_{j,k,l})\sim \mathrm{Bernoulli}(S_{j,k,l}).$ If $j=J+1,$ then only the two left most terms in (\ref{predictBerj}) are used, since $n_{J+1,k}=0,$ for $k\in[r].$ In the special cases where $B_{0}\sim \mathrm{GG}(\alpha,\zeta; \theta_{0}G_{0}),$ and $\mu_{j}|B_{0}\overset{ind}\sim\mathrm{sBP}((\alpha_{j},\beta_{j}),\theta_{j},B_{0}),$ 
 $(\tilde{X}_{j,k})$ and $\tilde{N}_{j,k}$ are as described in Section~\ref{predictGG} and   $A_{j,k,l}\overset{ind}\sim \mathrm{Bernoulli}(\frac{m_{j,k,l}-\alpha_{j}}{M_{j}+\beta_{j}}).$
\end{cor}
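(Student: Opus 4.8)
The plan is to specialize the general predictive representation of Proposition~\ref{postgenpredict} to the Bernoulli choice $A_j|p\sim\mathrm{Bernoulli}(p)$ and to exploit the fact that the slab of a Bernoulli variable is degenerate. Proposition~\ref{postgenpredict} already supplies the three-term skeleton: the predictive law of $Z^{(M_j+1)}_j$ equals $\hat Z_j+\sum_{k=1}^r[\sum_{l=1}^{n_{j,k}}A_{j,k,l}]\delta_{\tilde Y_k}$, with $\hat Z_j$ given by (\ref{newpartpredictA}), the $(\tilde A_{j,t,l})$ iid from the truncated slab (\ref{generalunivariatetrunc}), and $A_{j,k,l}|S_{j,k,l}=s$ distributed as $G_{A_j}(da|s)$. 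So the only work is to evaluate each ingredient when $A_j|p\sim\mathrm{Bernoulli}(p)$.

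The key observation is that the truncated slab (\ref{generalunivariatetrunc}) collapses to a point mass. For a Bernoulli variable the indicator $\mathbb{I}_{\{a_j\neq 0\}}$ forces $a_j=1$, and since a Bernoulli takes values only in $\{0,1\}$ the normalized measure in (\ref{generalunivariatetrunc}) assigns all of its mass to $a_j=1$; hence $\tilde A_{j,t,l}=1$ almost surely for every $t,l$. Substituting this into (\ref{newpartpredictA}) collapses the inner sums, $\sum_{l=1}^{\tilde X_{j,k}}\tilde A_{j,k+r,l}=\tilde X_{j,k}$ and $\sum_{l=1}^{\tilde N_{j,k}}\tilde A_{j,k,l}=\tilde N_{j,k}$, producing exactly the first two terms $\sum_{k=1}^{\tilde\xi_j(\phi_j)}\tilde X_{j,k}\delta_{Y_{j,k}}+\sum_{k=1}^r\tilde N_{j,k}\delta_{\tilde Y_k}$ of (\ref{predictBerj}). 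For the third term no truncation is imposed, so $A_{j,k,l}|S_{j,k,l}=s\sim\mathrm{Bernoulli}(s)$ gives directly $A_{j,k,l}\sim\mathrm{Bernoulli}(S_{j,k,l})$, with $S_{j,k,l}$ carrying the posterior jump density of Proposition~\ref{marginalslabHIBP}~(i). The case $j=J+1$ is then immediate: a previously unseen type has $n_{J+1,k}=0$ for all $k$, so the third sum is empty and only the two left-most terms survive.

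For the explicit generalized gamma / stable-Beta specialization I would simply read off the laws of $(\tilde X_{j,k})$ and $(\tilde N_{j,k})$ from Section~\ref{predictGG}, where $B_0\sim\mathrm{GG}(\alpha,\zeta;\theta_0 G_0)$ fixes $\tilde X_{j,k}$ with pmf $p_\alpha(\cdot|\gamma_{j,M_j+1},\tilde\zeta_J)$ and $\tilde N_{j,k}\sim\mathrm{NB}(n_k-\alpha,q_j)$. The remaining computation is the Bernoulli parameter of $A_{j,k,l}$: under $\mu_j|B_0\sim\mathrm{sBP}((\alpha_j,\beta_j),\theta_j,B_0)$ the jump density (\ref{jumpdensity1}) is proportional to $p^{m_{j,k,l}}(1-p)^{M_j-m_{j,k,l}}\rho_{\alpha_j,\beta_j}(p)$, so that $S_{j,k,l}\sim\mathrm{Beta}(m_{j,k,l}-\alpha_j,\,M_j-m_{j,k,l}+\beta_j+\alpha_j)$, as already recorded in Section~\ref{HIBPresults}. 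Marginalizing the Bernoulli over this jump and using $\mathbb{E}[\mathrm{Beta}(a,b)]=a/(a+b)$ yields $\mathbb{P}(A_{j,k,l}=1)=\mathbb{E}[S_{j,k,l}]=(m_{j,k,l}-\alpha_j)/(M_j+\beta_j)$, the stated parameter.

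Since all ingredients are inherited from Proposition~\ref{postgenpredict}, the argument is essentially a bookkeeping specialization; the one conceptual point, and the only place where care is needed, is recognizing that it is the truncated slab terms $(\tilde A_{j,t,l})$ attached to the new atoms $Y_{j,k}$ and re-selected atoms $\tilde Y_k$ inside $\hat Z_j$---not the untruncated $(A_{j,k,l})$ sitting on the observed atoms $\tilde Y_k$ in the third term---that degenerate to the constant $1$. Consequently the Poisson counts $\tilde X_{j,k}$ and $\tilde N_{j,k}$ pass through unchanged, while the old-feature term genuinely retains the $\mathrm{Bernoulli}(S_{j,k,l})$ randomness reflecting whether customer $M_j+1$ re-selects an already-sampled dish.
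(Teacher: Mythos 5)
Your proposal is correct and follows essentially the same route the paper intends: the corollary is stated as an immediate specialization of Proposition~\ref{postgenpredict}, and the key point you identify --- that the truncated slab law (\ref{generalunivariatetrunc}) degenerates to a point mass at $1$ in the Bernoulli case, collapsing the compound sums in (\ref{newpartpredictA}) to the bare counts $\tilde{X}_{j,k}$ and $\tilde{N}_{j,k}$ --- is exactly the computation required, as is the Beta-mean evaluation $(m_{j,k,l}-\alpha_{j})/(M_{j}+\beta_{j})$ for the re-selection probabilities. Nothing further is needed.
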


The expression in (\ref{predictBerj}) shows that the classic Bernoulli HIBP has a prediction rule like a Poisson IBP plus a component  that dictates sharing within group $j$ of previous dishes according to independent sums of Bernoulli distributions. It follows that if one samples only one process in each $j=1,2,\ldots,$ then the HIBP corresponds to a multi-factor Poisson $\mathrm{IBP},$ with relations to~\cite{Caron2012}.

\section{Simulation studies for Bernoulli HIBP} Our results provide explicit expressions suitable for practical implementation in a variety of settings. Naturally, we hope this encourages the  development of a variety of efficient computational procedures elsewhere.  As an illustration, we conduct a simulation study with a simple version of the Bernoulli GG-Beta HIBP model described in Sections~\ref{Bernoulliclassic} and \ref{HIBPresults}. That is to say a slight variation of the original HIBP of~\cite{Thibaux}. Here, specifically we let $A_{j}|p_{j}\sim \mathrm{Bernoulli}(p_{j}),$ and choose
\begin{align}
    B_0 \sim \mathrm{GG}(\alpha, \zeta; \theta_0 G_0), \quad \mu_j | B_0 \sim \mathrm{BP}(\theta_j, B_0) \text{ for } j=1,\dots, J.
\end{align}
We set base distribution $G_0$ as a uniform distribution $\mathrm{Unif}(0, 1)$.
For GG, we only consider the infinite-activty case with nonzero $\alpha$, that is, the parameters verifying $\theta_0 > 0$, $0 < \alpha < 1$ and $\zeta > 0$.
We further fixed $\zeta= 1$ for simplicity. For BP, we tied the parameters $\theta_1 = \dots = \theta_J = \theta$.

\subsection{Generation from the model}
Using Theorem~\ref{thmHIBPmarginalgen}, and Corollary~\ref{jointtruncbernoulli}, we can easily simulate feature assignments $((Z_j^{(i)}, i \in [M_j]), j\in[J])$. Figure~\ref{fig:sim_generation} shows some properties of the data generated from the model. Due to the characteristic of GG, the distribution of the frequencies of the feature assignment counts seems to exhibit power-law properties whose exponent depends on the parameter $\alpha$ (Figure~\ref{fig:sim_generation}, left). The parameter $\alpha$ also plays an important role in overall number of features and the diversity in the feature assignments (Figure~\ref{fig:sim_generation}, right).

\begin{figure}
    \centering
    \begin{subfigure}[b]{0.55\linewidth}
    \includegraphics[width=0.95\linewidth]{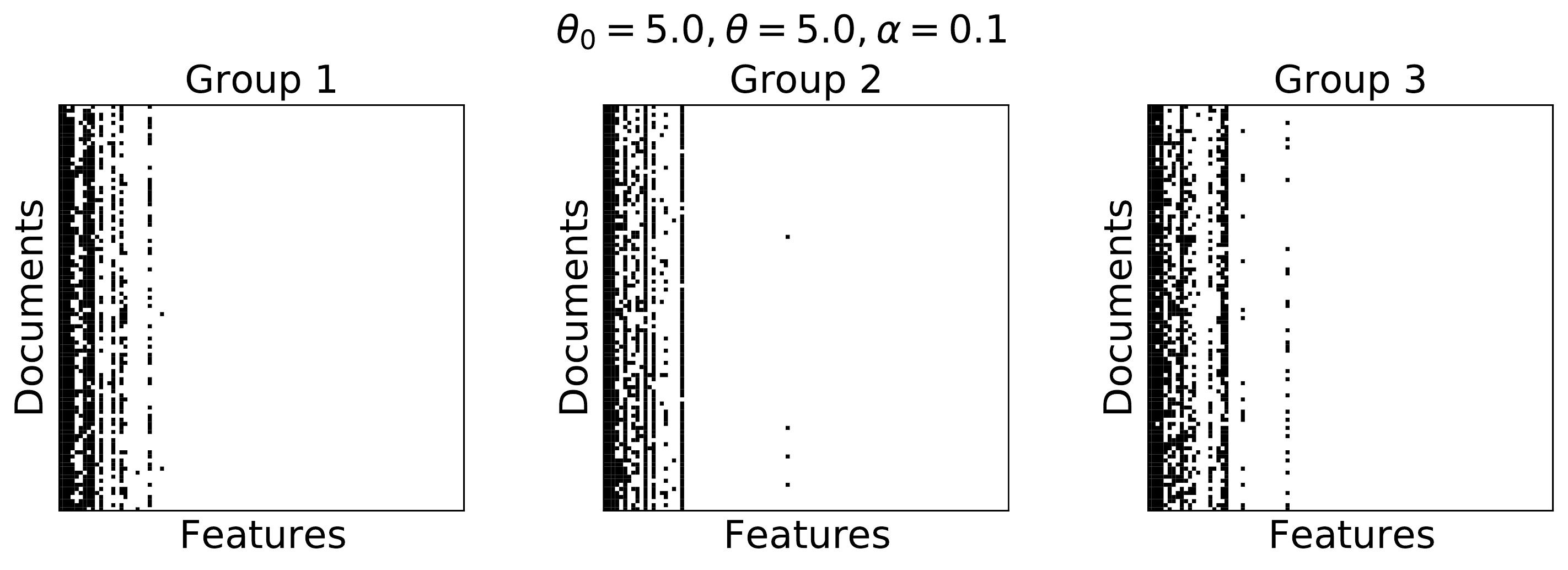}
     \includegraphics[width=0.95\linewidth]{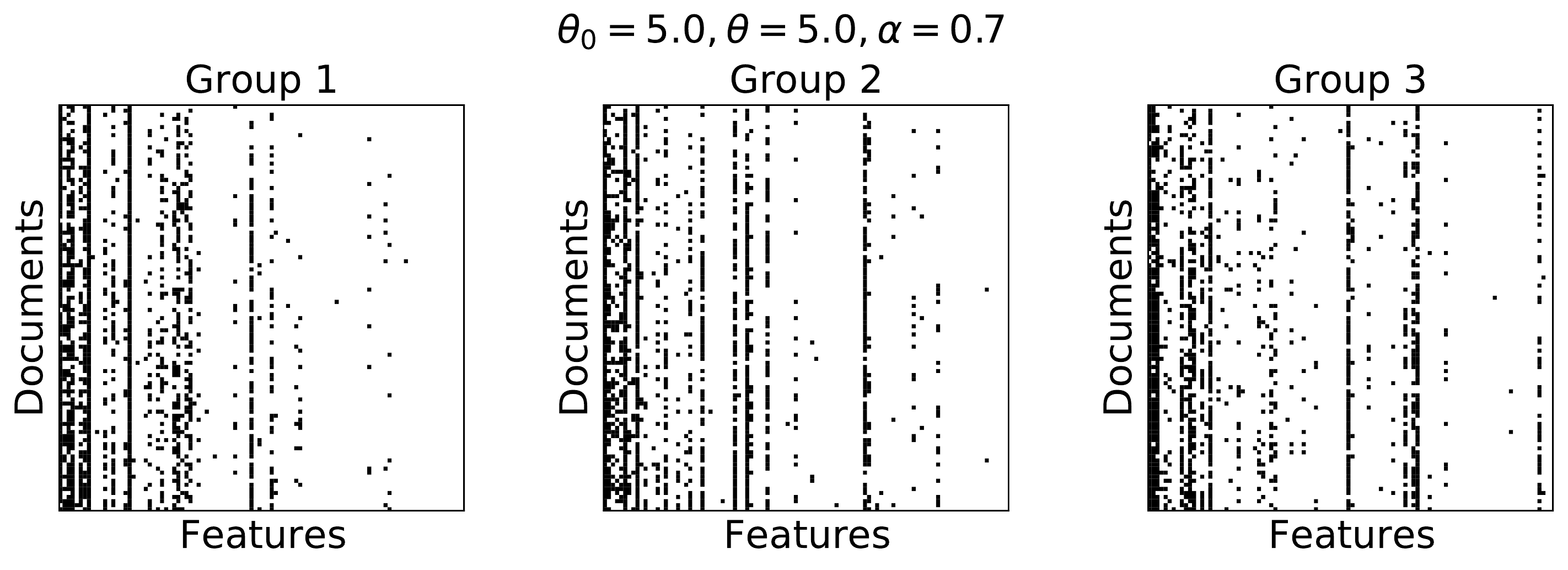}
     \end{subfigure}
      \begin{subfigure}[b]{0.40\linewidth}
    \includegraphics[width=0.95\linewidth]{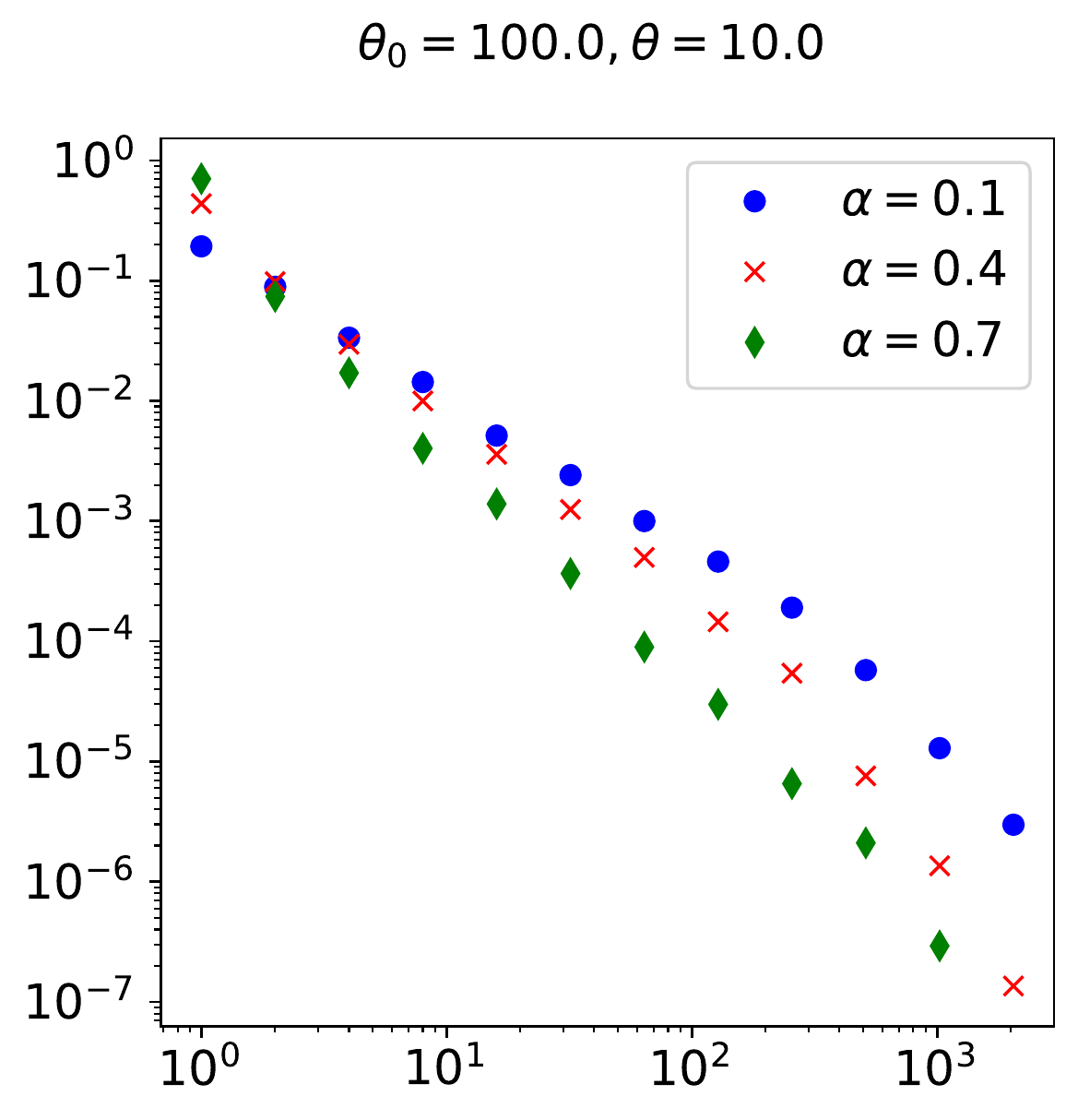}
    \end{subfigure}
    \caption{(Left) Plots of the feature assignment matrices generated from GG-Beta HIBP model. Black cells indicate at least one feature and white cells indicate no features. Top row shows the matrices for first three groups generated with $\theta_0=5.0, \theta=5.0, \alpha=0.1$. Bottom row shows the same plot with $\alpha=0.7$. (Right) Log-log plot of Frequency of Frequency (FoF) distributions of the counts generated from GG-Beta HIBP model with various $\alpha$ values.}
    \label{fig:sim_generation}
\end{figure}

\subsection{Posterior inference for the parameters}
Given an observed set of feature assignments $\mathbf{Z} := ((Z_j^{(i)}, i \in [M_j]), j \in [J])$, using Proposition \ref{marginalslabHIBP}, and more specifically Corollary~\ref{Bernoullimarg}, we can easily compute the marginal distribution $\mathbb{P}(\mathbf{Z}|\theta_0, \theta, \alpha)$, which, if we collect the terms related to the parameters $(\theta_0, \theta, \alpha)$, is proportional to,
\begin{equation}
\tilde\psi_0(\kappa)^{r} e^{-\tilde\psi_0(\kappa)} 
\prod_{k=1}^{r} 
\frac{\kappa^{n_{k}}(\kappa+\zeta)^{\alpha-n_{k}}}{(\kappa+\zeta)^\alpha-\zeta^\alpha}\frac{\alpha\Gamma(n_{k}-\alpha)}{\Gamma(1-\alpha)}
\prod_{j=1}^J \bigg(\frac{\psi_j(M_j)}{\kappa}\bigg)^{d_{j}},
\label{berECPF}
\end{equation}
where $n_{j,k}$ is the count of $k$th feature in the $j$th group, $n_{k} = \sum_{j=1}^J n_{j,k}$, $\psi_j(M_j) = \theta\sum_{i=1}^{M_j}1/i$, and $\kappa = \sum_{j=1}^J \psi_j(M_j)$. Hence, one can easily infer the parameters $(\theta_0, \theta, \alpha)$ using Markov-chain Monte-Carlo (MCMC) with some prior on $(\theta_0, \theta, \alpha)$. As a proof-of-concept, we generated two datasets from GG-Beta HIBP model with different parameter settings. The first one was generated with $\theta_0=2.0, \theta=2.0,$ and $\alpha=0.7$, and the second one was generated with $\theta_0=5.0, \theta=5.0$, and $\alpha=0.2$. Then we ran a simple random-walk Metropolis-Hastings sampler to see if we can recover the parameters used to generate data. For each dataset, we ran three independent chains where each chain was ran for 30,000 steps with 15,000 burn-in steps. Figure~\ref{fig:posterior_inference} shows that this simple algorithm properly infer the parameters, especially for the dataset with $\alpha=0.7$. For more details, refer to the Appendix~\ref{appendix:experiments}.

\begin{figure}
    \centering
        \includegraphics[width=0.3\linewidth]{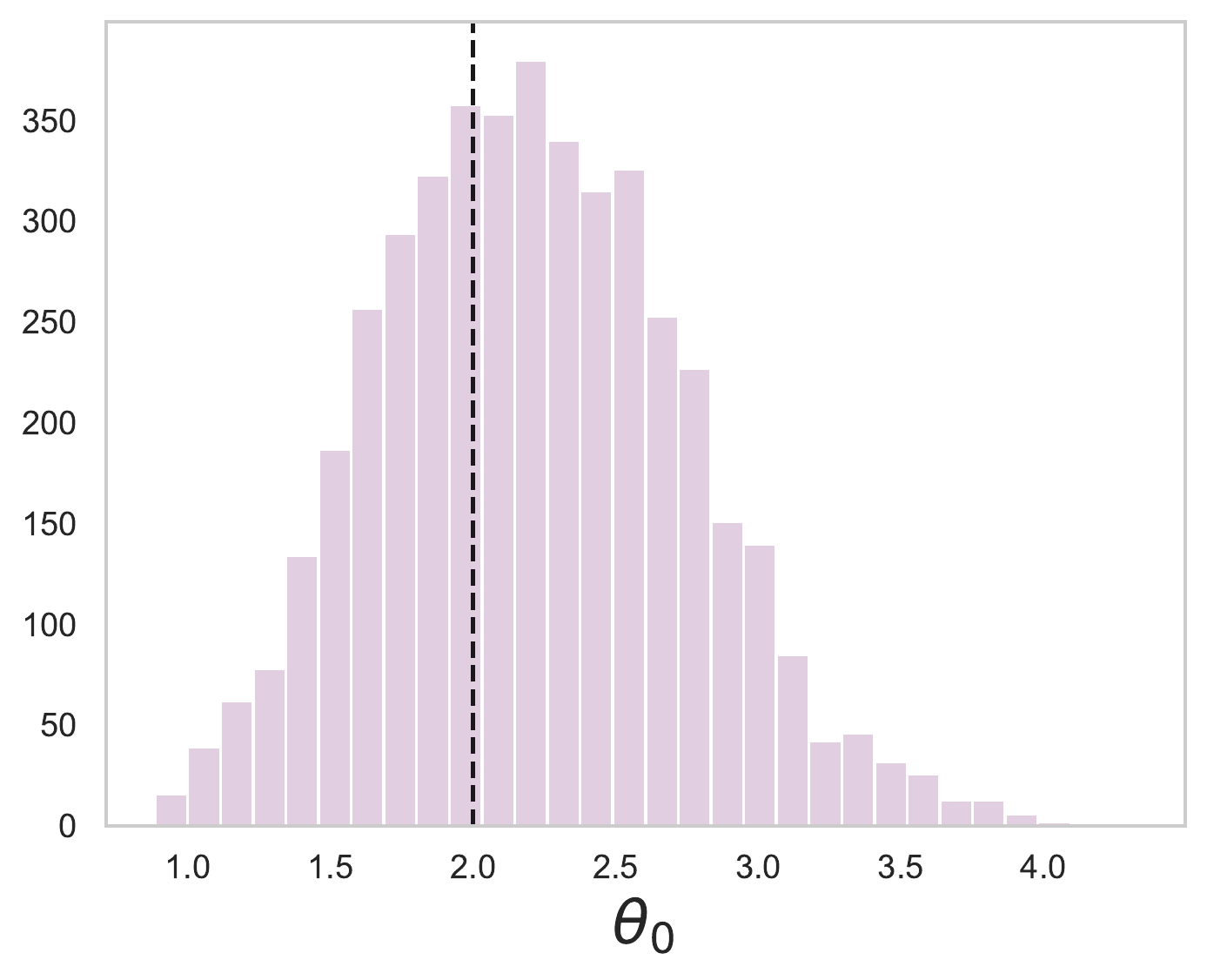}
    \includegraphics[width=0.3\linewidth]{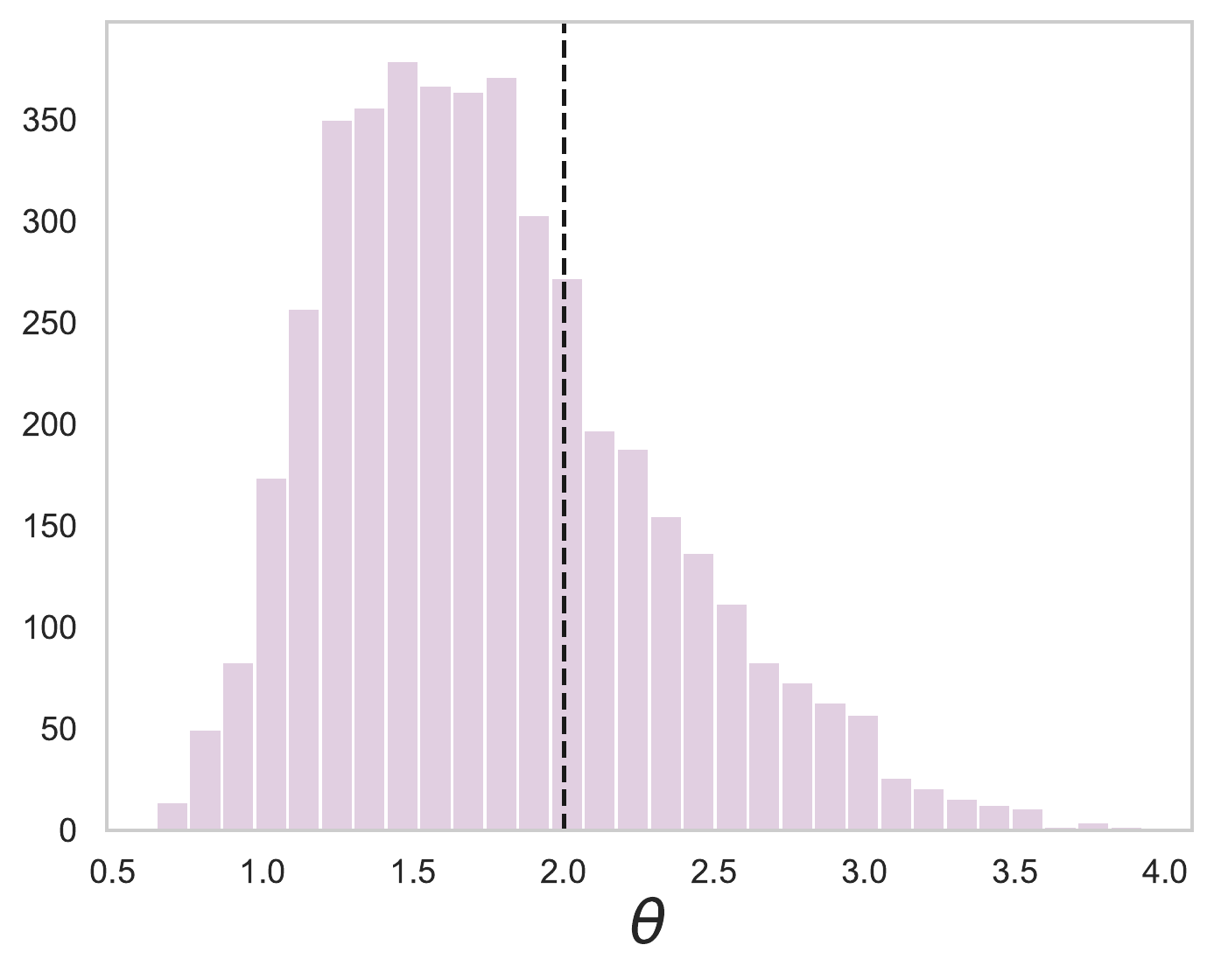}
    \includegraphics[width=0.3\linewidth]{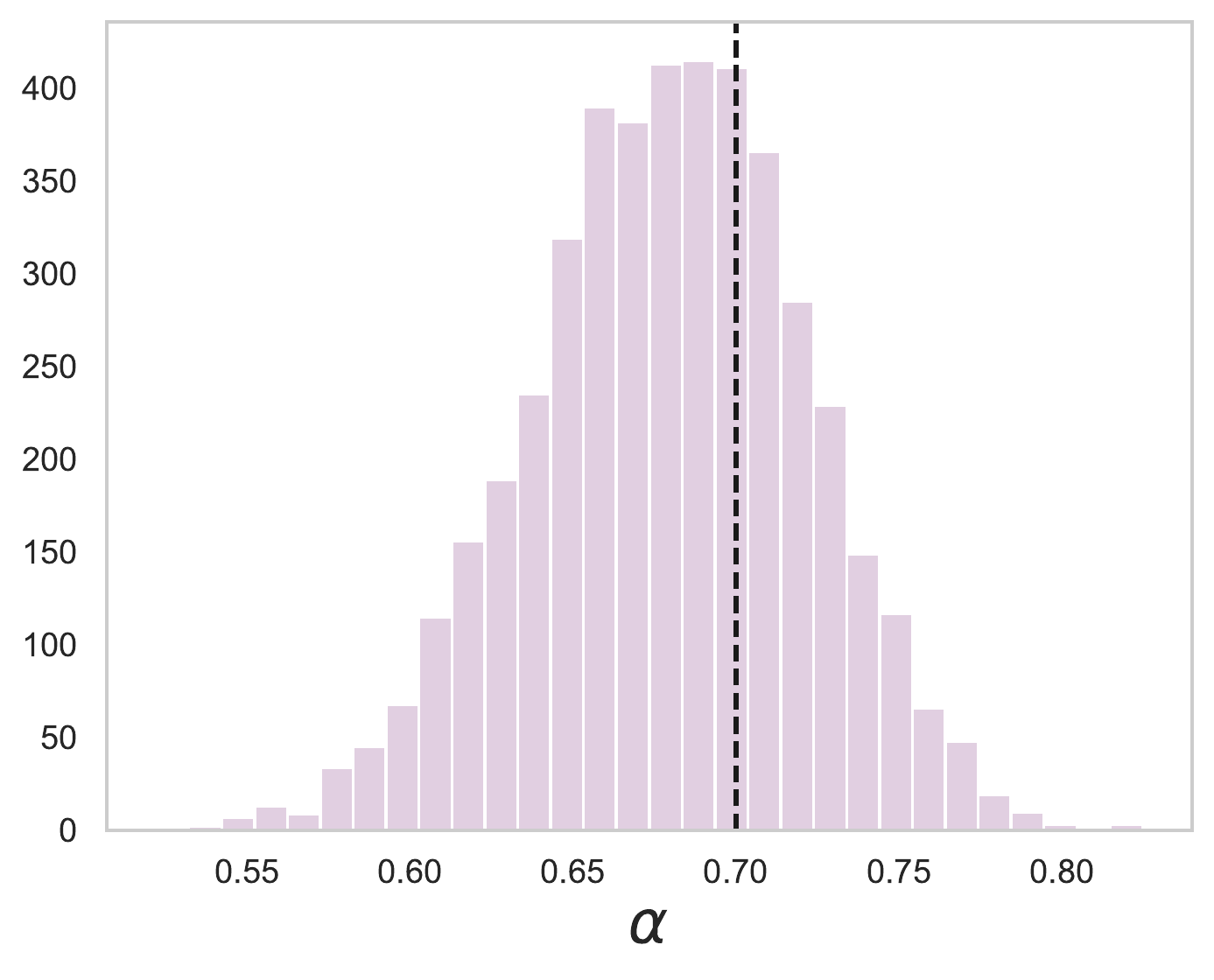}
    \includegraphics[width=0.3\linewidth]{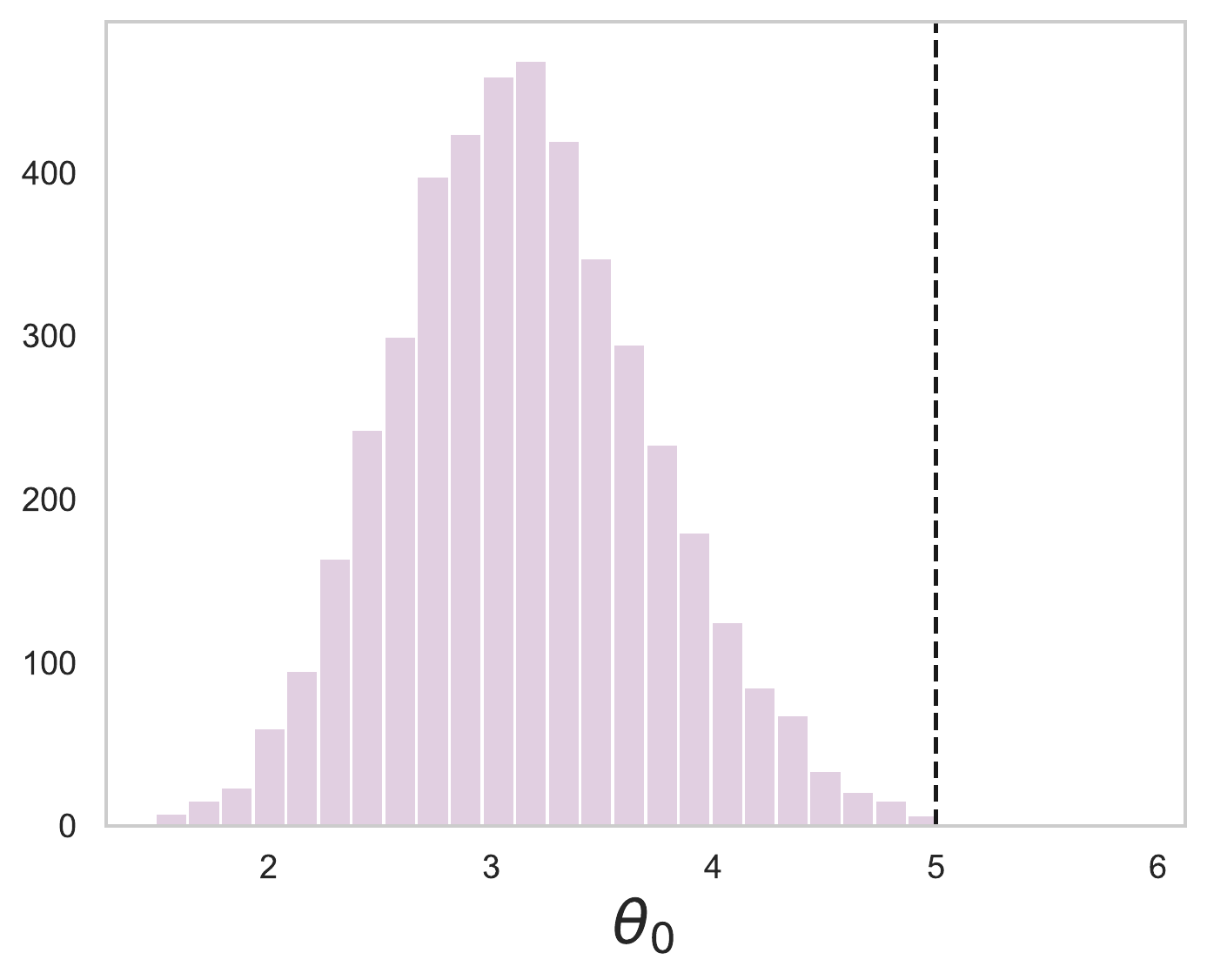}
    \includegraphics[width=0.3\linewidth]{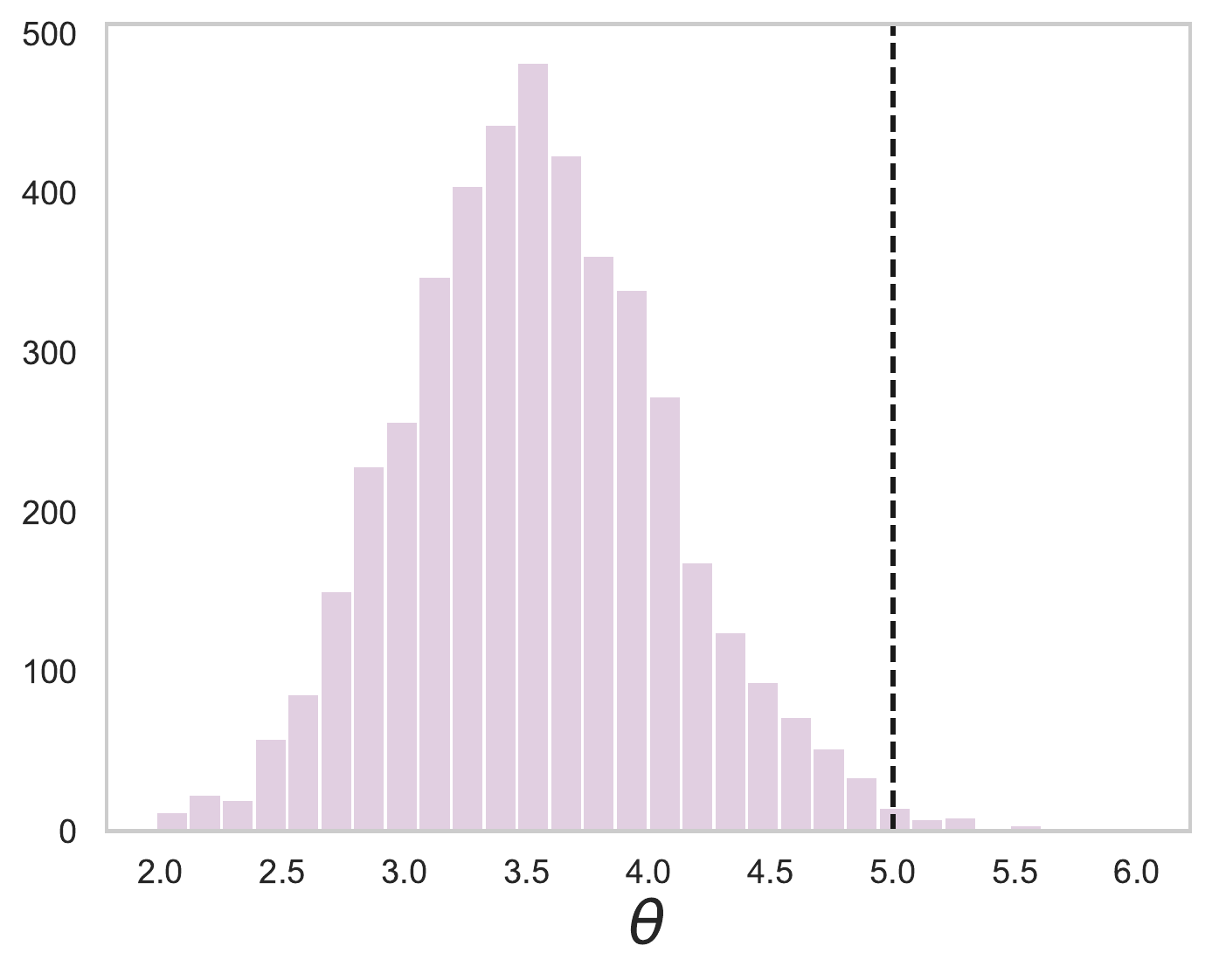}
    \includegraphics[width=0.3\linewidth]{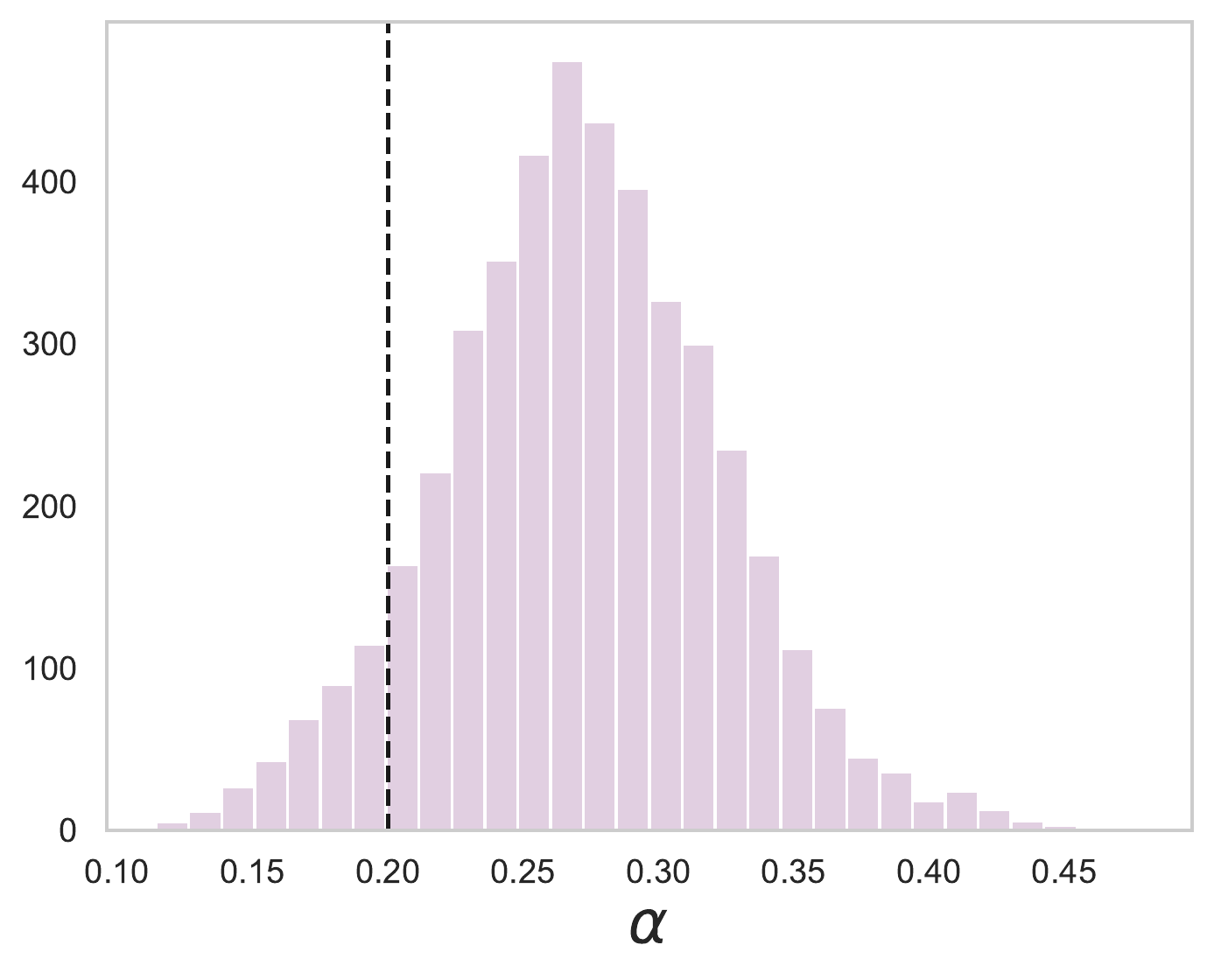}
    \caption{Posterior samples for parameters $\theta_0$ (left), $\theta$ (middle), and $\alpha$) (right). The top row for the data generated with $(\theta_0, \theta, \alpha) = (2.0, 2.0, 0.7)$, and the bottom row for the data generated with $(\theta_0, \theta, \alpha) = (5.0, 5.0, 0.2)$.}
    \label{fig:posterior_inference}
\end{figure}

\begin{rem} The expression in~(\ref{berECPF}) is proportional to a variant of the ECPF that appears in~
\cite[eq. (13)]{ZhouFoF}, hence one may consult~\cite[Section 4 and supplementary materials]{ZhouFoF} for further inference and computational ideas and procedures.   
\end{rem}
\begin{appendix}
\section{Details on the experiments}
\label{appendix:experiments}
We provide details on the posterior inference procedure described in the main text. As we stated in the main paper, we generated two datasets, one with $(\theta_0, \theta, \alpha) = (2.0, 2.0, 0.7)$ and another with $(\theta_0, \theta, \alpha) = (5.0, 5.0, 0.2)$. For each configuration, we generated documents of $J=10$ groups and $M_j = 5,000$ documents per each group, hence $50,000$ documents in total. We placed standard log-normal prior for $\theta_0$ and $\theta$, and $\mathrm{Beta}(1,1)$ (uniform) prior for $\alpha$. We used the standard random-walk Metropolis Hastings with variance $0.05$, and ran three independent chains for each dataset. For each chain, among 30,000 samples, we collected every 10th samples after discarding 15,000 burn-in samples. Figure~\ref{fig:data1_traces} and \ref{fig:data2_traces} show the log probability traces and posterior parameter traces.

\begin{figure}
    \centering
    \includegraphics[width=0.45\linewidth]{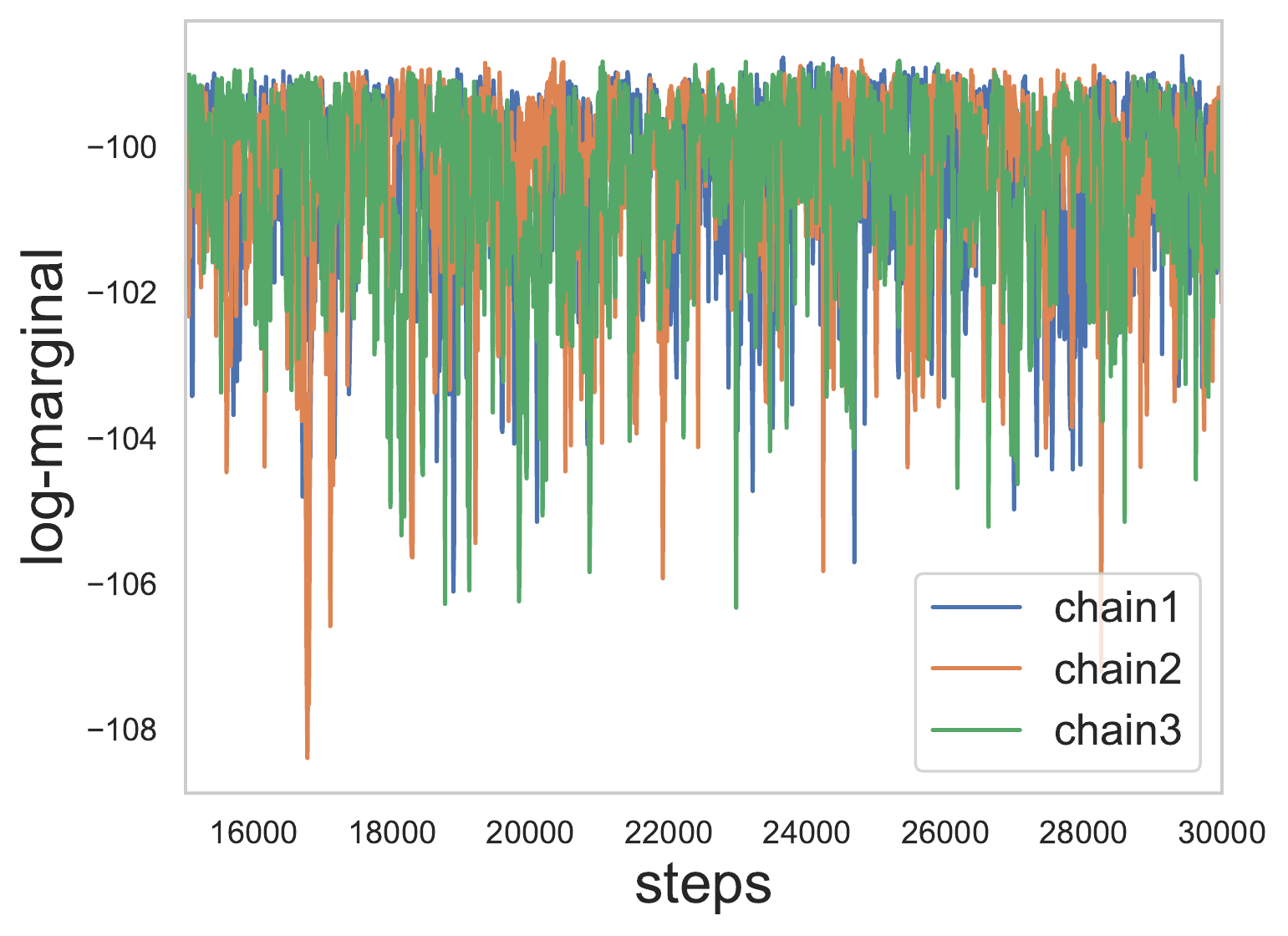}
        \includegraphics[width=0.45\linewidth]{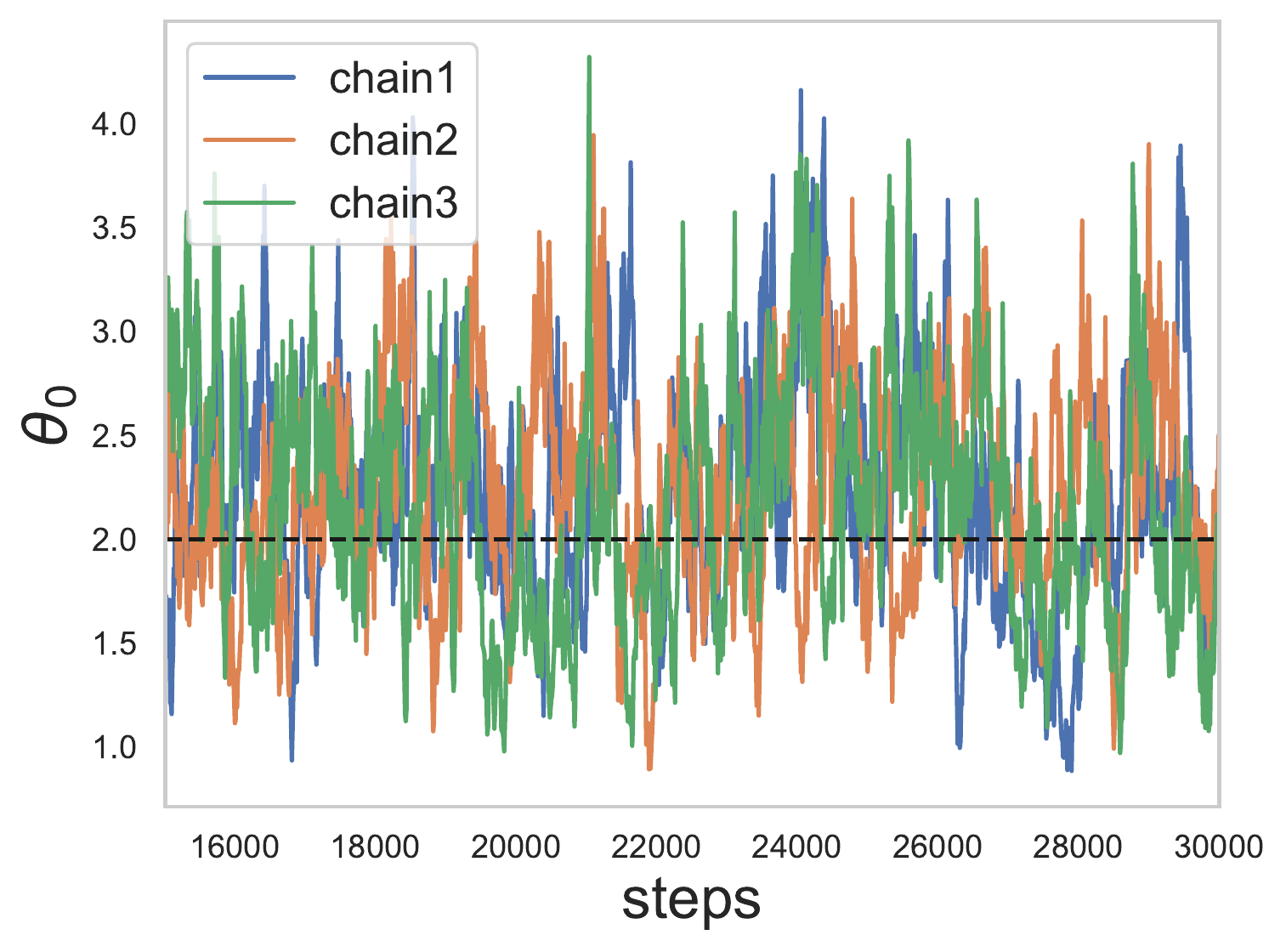}\\
            \includegraphics[width=0.45\linewidth]{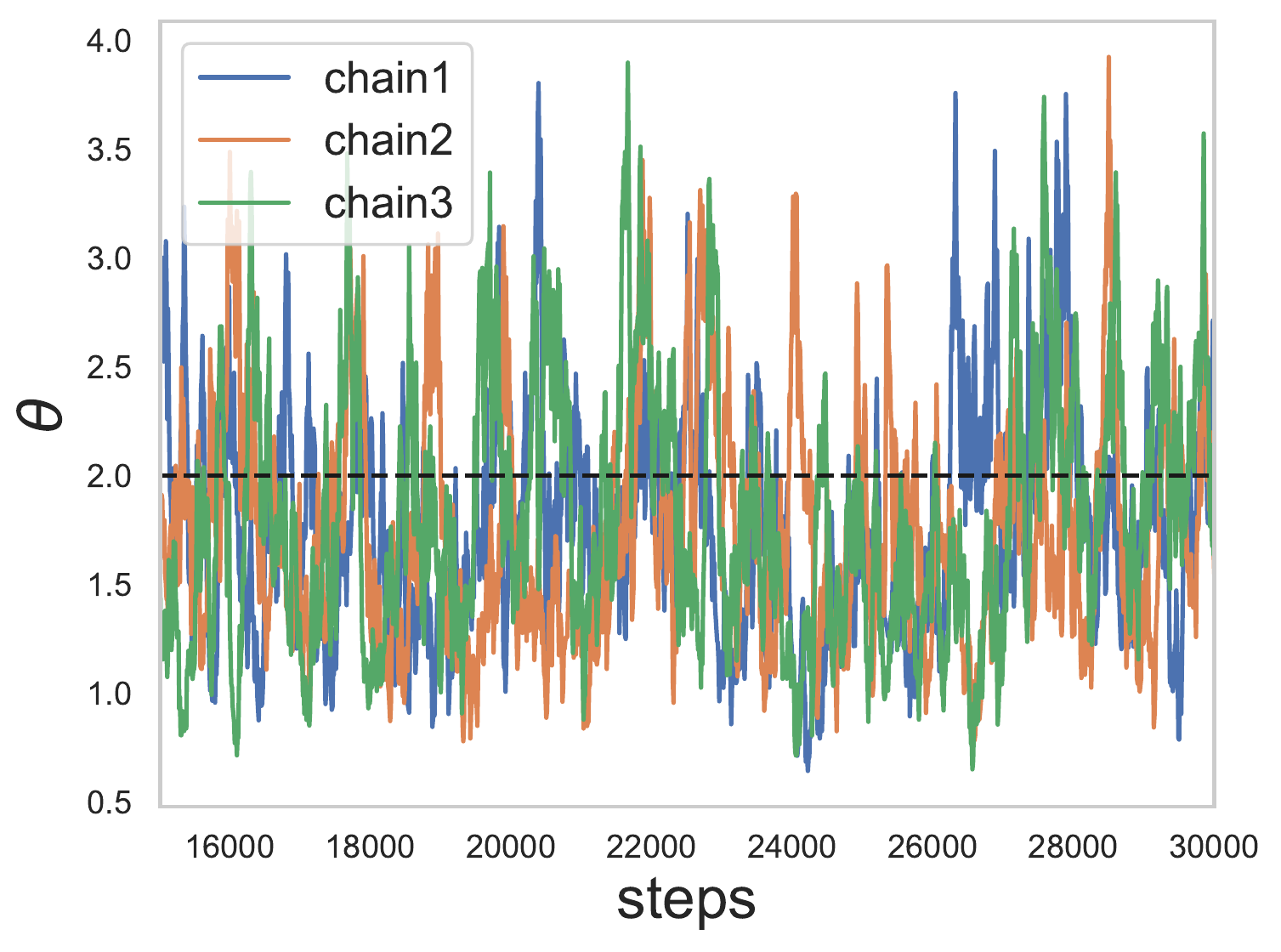}
                \includegraphics[width=0.45\linewidth]{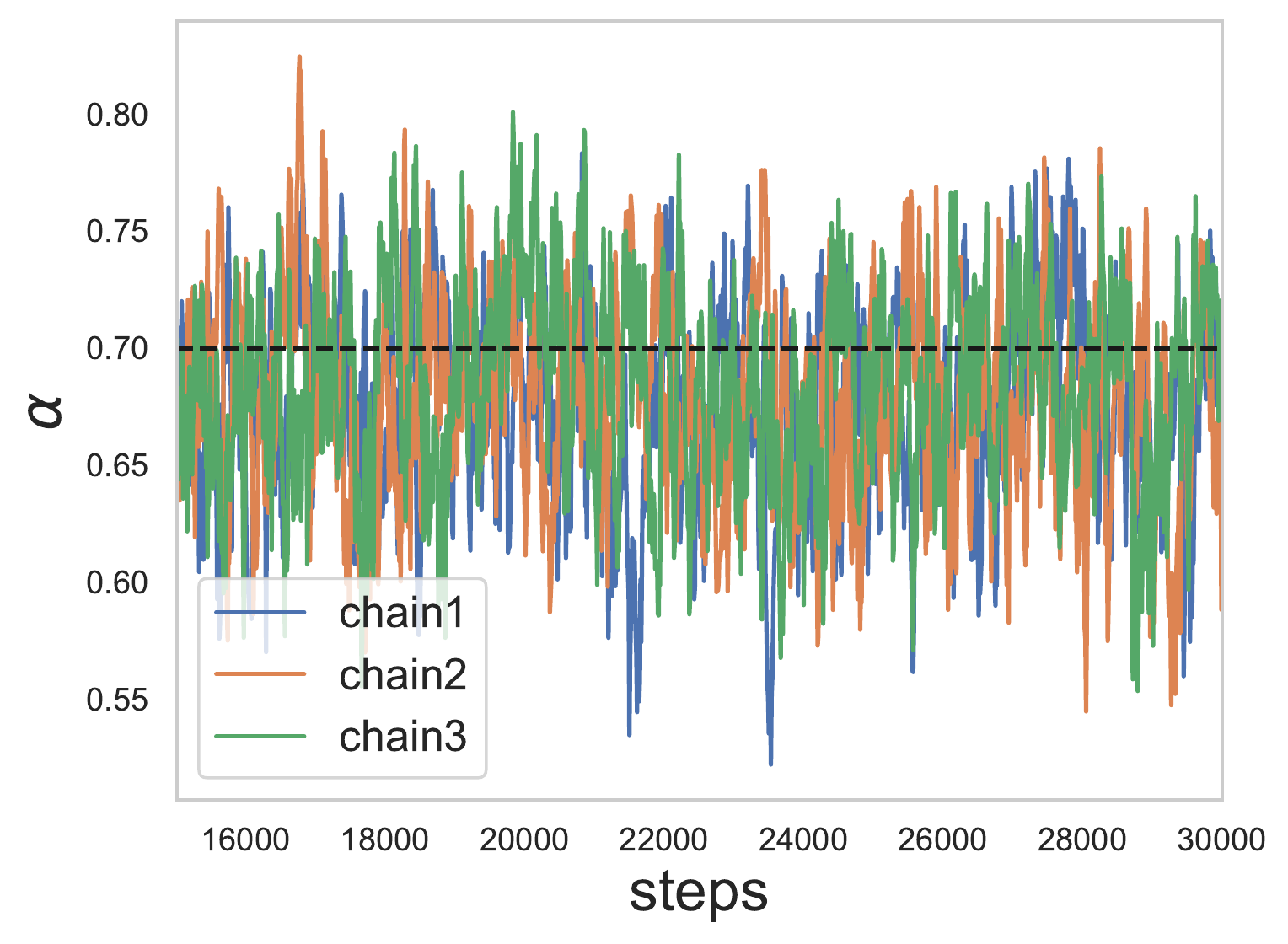}
    \caption{Posterior inference results for the dataset generated with $(\theta_0, \theta, \alpha) = (2.0, 2.0, 0.7)$. Log-marginal trace (upper left), sample traces for $\theta_0$ (upper right), $\theta$ (lower left), and $\alpha$ (lower right). Black dashed lines indicate true values.}
    \label{fig:data1_traces}
\end{figure}

\begin{figure}
    \centering
    \includegraphics[width=0.45\linewidth]{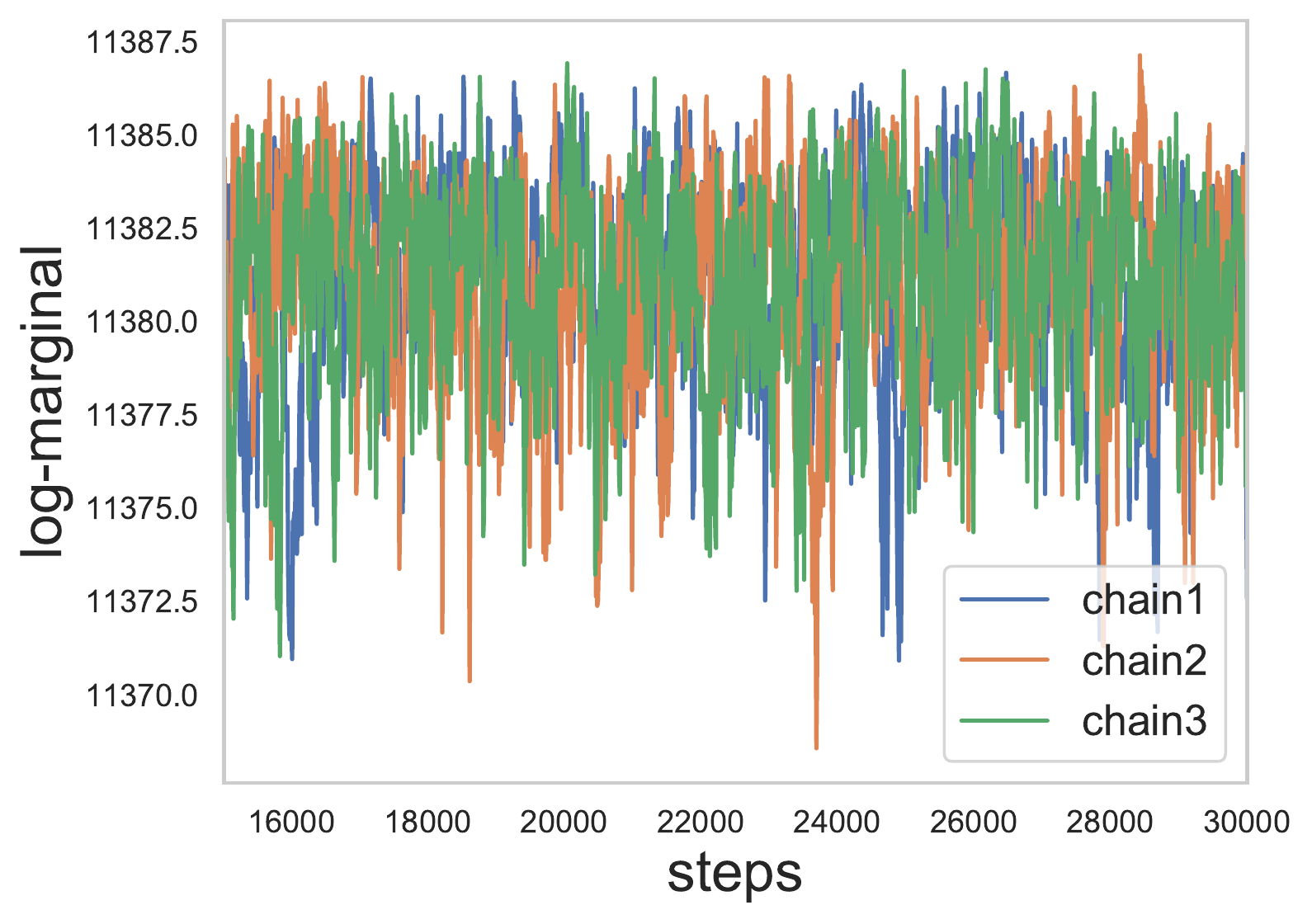}
        \includegraphics[width=0.45\linewidth]{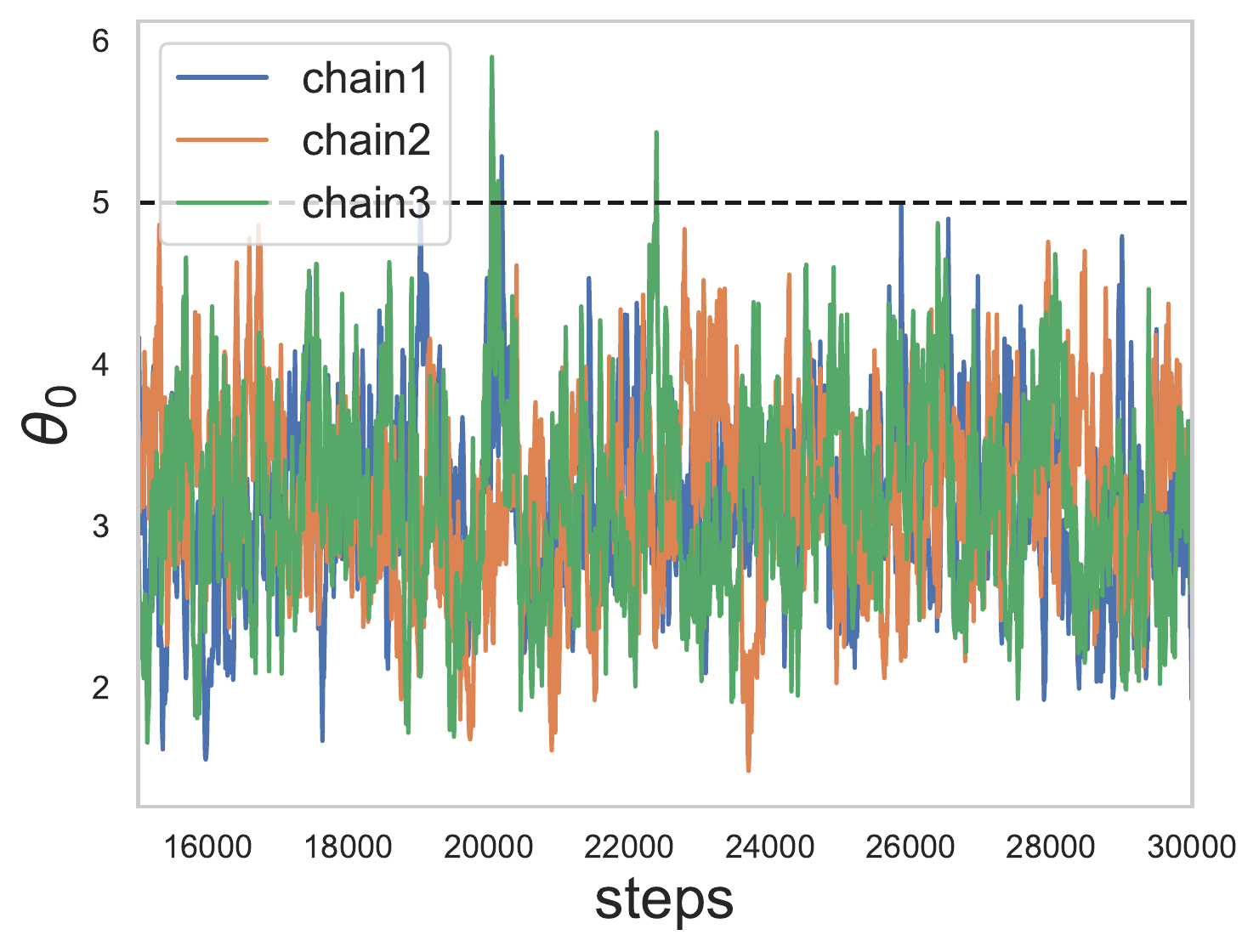}\\
            \includegraphics[width=0.45\linewidth]{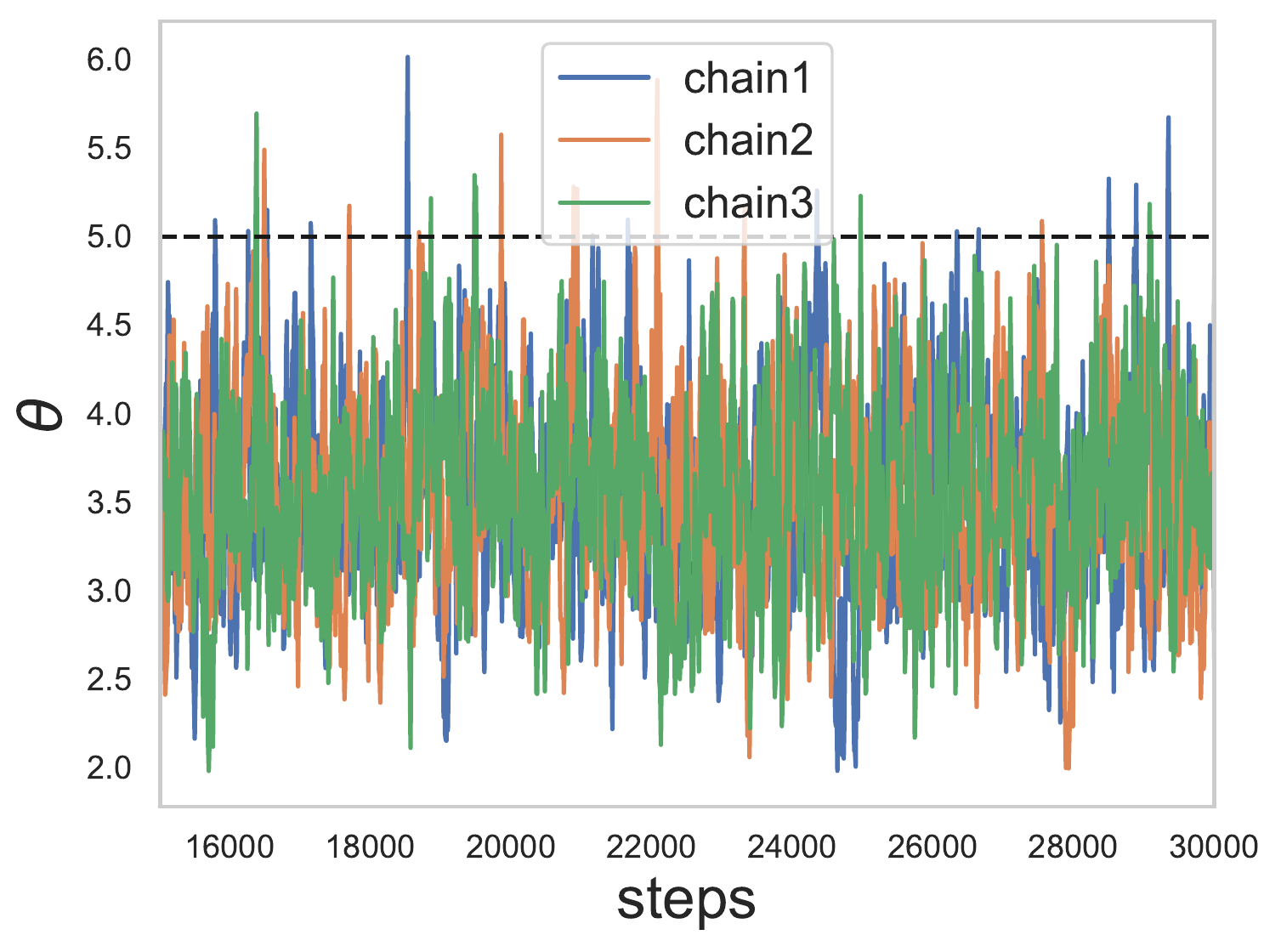}
                \includegraphics[width=0.45\linewidth]{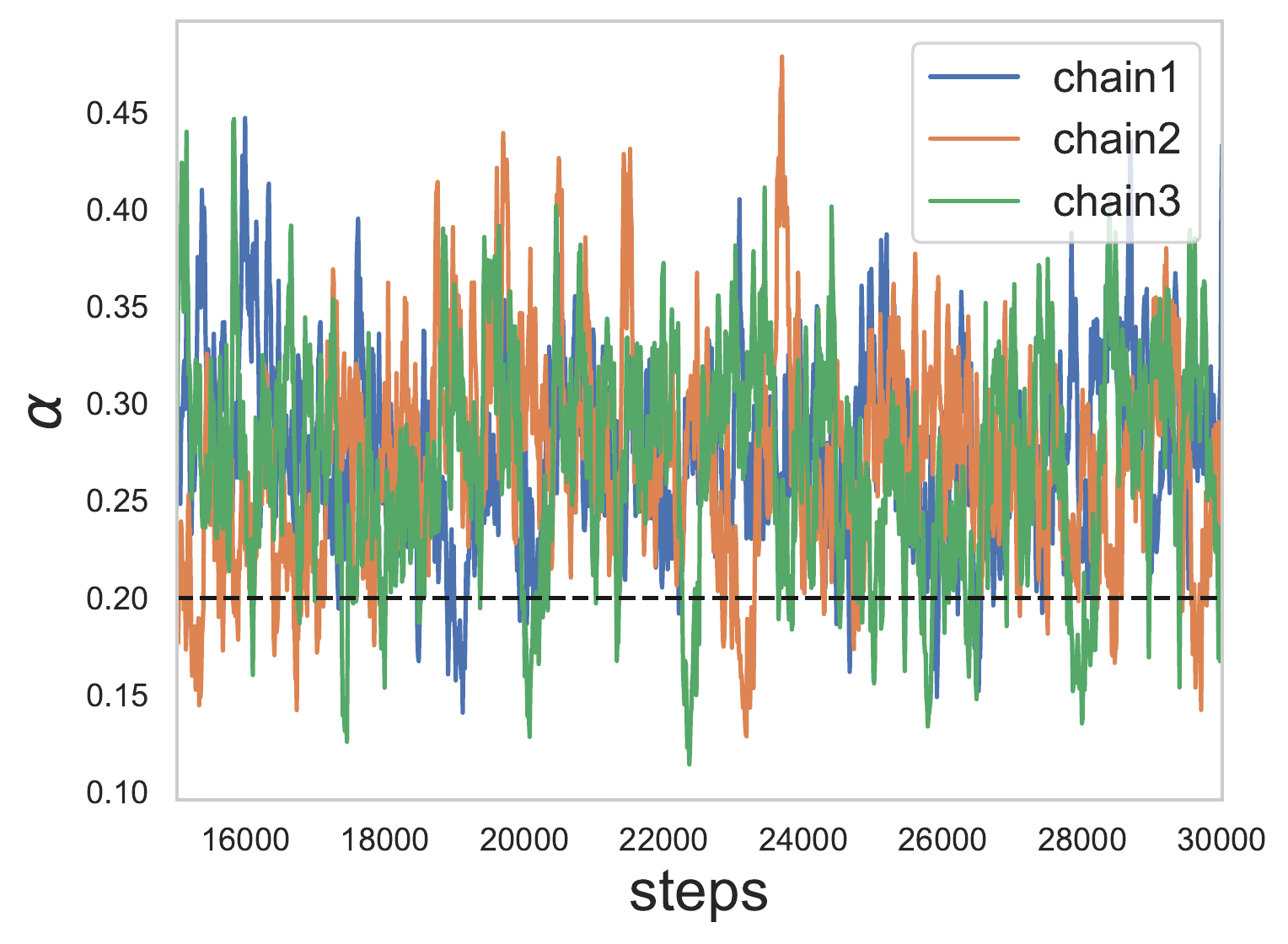}
    \caption{Posterior inference results for the dataset generated with $(\theta_0, \theta, \alpha) = (5.0, 5.0, 0.2)$. Log-marginal trace (upper left), sample traces for $\theta_0$ (upper right), $\theta$ (lower left), and $\alpha$ (lower right). Black dashed lines indicate true values.}
    \label{fig:data2_traces}
\end{figure}

\end{appendix}


\begin{thebibliography}{0}







\bibitem{Argiento}
\textsc{Argiento, R., Cremaschi, A.} and \textsc{Vannucci, M.} (2020). Hierarchical Normalized Completely Random Measures to Cluster Grouped Data. \textit{J. Amer. Statist. Assoc.} \textbf{115} 318-33.

\bibitem{AyedCaron}
\textsc{Ayed, F.} and \textsc{Caron, F}. (2019) Nonnegative Bayesian nonparametric factor models with completely random measures for community detection. arXiv:1902.10693.

\bibitem{Basbug}
\textsc{Basbug, M.} and \textsc{Engelhardt, B.} (2016). Hierarchical compound Poisson factorization. In International Conference on Machine Learning (pp. 1795-1803). PMLR.

\bibitem{BleiLDA}
\textsc{Blei, D.M., Ng, A.Y.} and 
\textsc{Jordan, M.I.} (2003). Latent Dirichlet Allocation. \textit{J. Machine Learning Research}, \textbf{3}, 993-1022.

\bibitem{Broderick1}
\textsc{Broderick, T, Jordan, M.I.,} and \textsc{Pitman, J.} (2013)
Clusters and features from combinatorial stochastic processes.
\emph{Statist. Sci.} \textbf{28}, 289-312.

\bibitem{Broderick2}
\textsc{Broderick, T, Mackey, L, Paisley, J,} and \textsc{Jordan, M.I.}
(2015). 
Combinatorial clustering and the beta negative binomial process. \emph{IEEE Transactions on Pattern Analysis and Machine Intelligence.} \textbf{37},
 290-306.

\bibitem{BroderickWilson}
\textsc{Broderick, T., Wilson, A.} and \textsc{Jordan, M.I.} (2018). Posteriors, conjugacy,
and exponential families for completely random measures. \textit{Bernoulli} \textbf{24} 3181–3221.



\bibitem{CamerLP1}
\textsc{Camerlenghi,F., Lijoi, A., Orbanz,P.} and \textsc {Pr\"unster, I.} (2019). Distribution theory for hierarchical processes. \emph{Ann. Statist.} \textbf{47}, 67-92.

\bibitem{CamerLP2}
\textsc{Camerlenghi,F., Lijoi, A.} and \textsc {Pr\"unster, I.} (2020). Survival analysis via hierarchically dependent mixture hazards. \emph{Ann. Statist.} to appear

\bibitem{CampbellTrait}
\textsc{Campbell, T., Cai, D.} and \textsc{Broderick, T.} (2018). Exchangeable trait allocations. 
\textit{Electronic Journal of Statistics}, \textbf{12} pp. 2290-2322.

\bibitem{Caron2012}
\textsc{Caron, F.} (2012) Bayesian nonparametric models for bipartite graphs. Neural Information Processing Systems (NIPS 2012), Lake Tahoe, USA, 2012.

\bibitem{CaronFox}
\textsc{Caron, F.} and \textsc{Fox, E.B.}(2017). 
Sparse graphs using exchangeable random
measures. \emph{JRSSB} \textbf{79},  1295-1366.

\bibitem{BoChen}
\textsc{Chen, B., Polatkan, G., Sapiro, G., Dunson, D.B.}, and \textsc{Carin, L.}.(2011) The hierarchical beta process for convolutional factor analysis and deep learning. \textit{In Proceedings of the 28th International Conference on International Conference on Machine Learning},  361-368. 2011.

\bibitem{DevroyeTryp}
\textsc{Devroye, L.} (1993). A triptych of discrete distributions related to the stable law. 
\textit{Statistics and Probability Letters}, \textbf{18}, 349-351.

\bibitem{DiBenedettopartition}
\textsc{Di Benedetto, G., Caron, F.} and \textsc{Teh, Y. W.} (2020).
Non-exchangeable random partition models for microclustering.\emph{Ann. Statist.}, to appear.


\bibitem{DykstraLaud}
\textsc{Dykstra, R.L.} and \textsc{Laud, P.W.} (1981). A Bayesian nonparametric approach to
reliability. \textit{Ann. Statist.} \textbf{9} 356–367.


\bibitem{Ferg1973}
\textsc{Ferguson, T.S.} (1973).
 A Bayesian Analysis of Some Nonparametric Problems. \emph{Ann. Statist.} \textbf{1} 209--230. 

\bibitem{FisherPoisson}
\textsc{Fisher, R.A.}(1922).
On the Interpretation of $\chi^{2}$ from Contingency Tables, and the Calculation of P.
\emph{Journal of the Royal Statistical Society} 
\textbf{85}, 87-94.

\bibitem{Fisher}
\textsc{Fisher, R.A., Corbet, S.A.,} and \textsc{Williams, C.B.} (1943) The relation between the number of species and the number of individuals in a random sample of an animal population. 
\textit{The Journal of Animal Ecology} 42-58.




\bibitem{Goldwater}
\textsc{Goldwater, S., Griffiths, T.} and \textsc{Johnson, M.} (2006). Interpolating between types and tokens by estimating power-law generators. \textit{Advances in neural information processing systems}, \textbf{18}, p.459.

\bibitem{GriffithsZ}
\textsc{Griffiths, T.L., and Ghahramani, Z.} (2006)
Infinite Latent Feature Models and the Indian Buffet Process.
In Advances in Neural Information Processing Systems 18 (NIPS-2005).

\bibitem{Griffiths1}
\textsc{Griffiths, T.L., and Ghahramani, Z.} (2011) The Indian buffet process: An introduction and review. \textit{The Journal of Machine Learning Research} 12 (2011): 1185-1224.

\bibitem{GuptaRestrict}
\textsc{Gupta, S.K., Phung, D.,} and \textsc{Venkatesh, S.}~(2012). A Bayesian nonparametric joint factor model for learning shared and individual subspaces from multiple data sources. In Proceedings of the 2012 SIAM International Conference on Data Mining (pp. 200-211). Society for Industrial and Applied Mathematics.

\bibitem{Croy}
\textsc{Heaukulani, C., and Roy, D. M.} (2016). The combinatorial structure of beta negative binomial processes. \textit{Bernoulli} \textbf{22}, 2301-2324.

\bibitem{hjort}
\textsc{Hjort, N. L.} (1990). Nonparametric Bayes estimators based on beta processes in models for life history data. \emph{Ann, Stat } \textbf{18}, 1259-1294.

\bibitem{HJL}
\textsc{Ho, M-W., James, L.F.} and \textsc{Lau, J.W.} (2021). Gibbs Partitions, Riemann-Liouville Fractional Operators, Mittag-Leffler Functions, and Fragmentations derived from stable subordinators. \textit{Journal of Applied Probability, to appear}

\bibitem{IJ2001}
\textsc{Ishwaran, H.} and \textsc{James, L.F.} (2001). 
Gibbs sampling methods for stick-breaking priors.  
\emph{J. Amer. Stat. Assoc.}, \textbf{96}, 161-173.

\bibitem{IJ2003}
\textsc{Ishwaran, H.} and \textsc{James, L.F.} (2003). Generalized weighted Chinese restaurant processes for species sampling mixture models. \textit{Statist. Sinica}, \textbf{13} 1211-1235.

\bibitem{IJ2004}
\textsc{Ishwaran, H.} and \textsc{James, L.F.} (2004). Computational methods for multiplicative intensity models using weighted gamma processes.  \emph{J. Amer. Stat. Assoc.}, \textbf{99}, 175-190.

\bibitem{IR2005}
\textsc{Ishwaran, H., and Rao, J. S.} (2005). Spike and slab variable selection: frequentist and Bayesian strategies. \emph{Ann. Stat. }, 730-773.

\bibitem{James2002}
\textsc{James, L.F.} (2002). Poisson process partition calculus with applications to exchangeable models and Bayesian nonparametrics. Unpublished manuscript. ArXiv math.PR/0205093.


\bibitem{James2005}
\textsc{James, L.F.} (2005)
Bayesian Poisson Process Partition Calculus with an Application to Bayesian L\'evy Moving Averages
\emph{Ann. Stat. }\textbf{33}, 1771-1799.

\bibitem{JamesNTR}
\textsc{James, L.F.} (2006).
Poisson calculus for
spatial neutral to the right processes.
\emph{Ann. Stat. }\textbf{34}, 416-440


\bibitem{James2017}
\textsc{James, L.F.}  (2017).
Bayesian Poisson Calculus for Latent Feature Modeling via Generalized Indian Buffet Process Priors.
\emph{Ann. Stat. }\textbf{45}, 2016-2045.

\bibitem{JamesStick}
\textsc{James, L.~F.} (2019). Stick-breaking Pitman-Yor processes given the species sampling size.
arXiv:1908.07186 [math.ST]

\bibitem{JLP2} \textsc{James, L. F., Lijoi, A.} and
\textsc{Pr\"{u}nster, I.} (2009). Posterior analysis for normalized random measures with independent increments. \emph{Scand. J. Stat.} \textbf{36} 76--97.

\bibitem{Kessler}
\textsc{Kessler, S., Nguyen, V., Zohren, S.} and \textsc{Roberts, S.}(2019) Hierarchical Indian Buffet Neural Networks for Bayesian Continual Learning. arXiv:1912.02290.

\bibitem{KimY}
\textsc{Kim, Y.} (1999). Nonparametric Bayesian estimators for counting processes. \emph{Ann. Stat.}, \textbf{27} 562-588.

\bibitem{KimLee}
\textsc{Kim, Y. and Lee, J.} (2001).
On posterior consistency of survival models.\emph{Ann.
Stat.}, \textbf{29} 666-686.


\bibitem{KnowlesThesis}
\textsc{Knowles, D. A. }(2012). Bayesian non-parametric models and inference for sparse and hierarchical latent structure. Ph.D. Thesis University of Cambridge

\bibitem{KnowlesAAS}
\textsc{Knowles, D.,} and \textsc{Ghahramani, Z.} (2011). Nonparametric Bayesian sparse factor models with application to gene expression modeling. \emph{The Annals of Applied Statistics}, \textbf{5}(2B), 1534-1552.

\bibitem{BuntineHPYtwitter}
\textsc{Lim, K.W., Buntine, W., Chen, C.} and \textsc{Du, L.} (2016). Nonparametric Bayesian topic modelling with the hierarchical Pitman–Yor processes. 
\textit{International Journal of Approximate Reasoning}, \textbf{78} 172-191.


\bibitem{Lo1982}
\textsc{Lo, A.Y.} (1982). Bayesian nonparametric statistical inference for Poisson point processes. \emph{Zeitschrift f{\"u}r Wahrscheinlichkeitstheorie und verwandte Gebiete}, \textbf{59}(1), 55-66.

\bibitem{Lo1984}
\textsc{Lo, A.Y.} (1984). On a Class of Bayesian Nonparametric Estimates: I. Density Estimates.
\textit{Ann. Statist.} \textbf{12}, 351 - 357.

\bibitem{LoWeng89}
\textsc{Lo, A.Y.} and \textsc{Weng, C. S.} (1989). On a class of Bayesian nonparametric estimates: II. Hazard rate estimates. \emph{Annals of the Institute of Statistical Mathematics,} \textbf{41}(2), 227-245.


\bibitem{Masoerotrait}
\textsc{Masoero, L., Camerlenghi, F., Favaro, S.} and \textsc{Broderick, T.} (2018).
Posterior representations of hierarchical completely random measures in trait allocation models. BNP-NeurIPS. 2018.

\bibitem{MasoeroBiometrika}
\textsc{Masoero, L., Camerlenghi, F., Favaro, S.} and \textsc{Broderick, T.} (2021). More for less: Predicting and maximizing genetic variant discovery via Bayesian nonparametrics. To appear\textit{ Biometrika}.









\bibitem{Penrose}
\textsc{Penrose, M. D.},and \textsc{Wade, A. R.} (2004). Random minimal directed spanning trees and Dickman-type distributions. \textit{Advances in Applied Probability}, \textbf{36}, 691-714.

\bibitem{Pit96}
\textsc{Pitman, J.} (1996). \textit{Some developments of the
Blackwell-MacQueen urn scheme.} Statistics, probability and game
theory, 245--267, IMS Lecture Notes Monogr. Ser., 30, Inst. Math.
Statist., Hayward, CA.

\bibitem{Pit97}
\textsc{Pitman, J.} (1997)
Partition structures derived from Brownian motion and stable
subordinators. \emph{Bernoulli} \textbf{3} 79-96


\bibitem{Pit06}
\textsc{Pitman, J.} (2006). \textit{Combinatorial stochastic
processes.} Lectures from the 32nd Summer School on Probability
Theory held in Saint-Flour, July 7--24, 2002. With a foreword by
Jean Picard. Lecture Notes in Mathematics, 1875. Springer-Verlag,
Berlin.

\bibitem{PitmanPoissonMix}
\textsc{Pitman, J.} (2017).
Mixed Poisson and negative binomial models for clustering and species sampling. Manuscript in preparation.

\bibitem{Quenouille}
\textsc{Quenouille, M. H.} (1949). A relation between the logarithmic, Poisson, and negative binomial series. \textit{Biometrics}, \textbf{5}, 162-164.

\bibitem{RagaBlei}
\textsc{Ranganath, R.,} and \textsc{Blei, D.M.} (2018) Correlated Random Measures. 
\textit{Journal of the American Statistical Association.} \textbf{113}  417-430.

\bibitem{Ragadeep}
\textsc{Ranganath, R., Tang, L., Charlin, L.} and \textsc{Blei, D.} (2015) Deep exponential families. In \textit{Artificial Intelligence and Statistics} (pp. 762-771). PMLR.


\bibitem{Scheinthesis}
\textsc{Schein, A.} (2019) \textit{Allocative Poisson Factorization for Computational Social Science.} PhD thesis, University of Massachusetts Amherst, 2019.

\bibitem{Soper}
\textsc{Soper, H.E.}(1922) \textit{Frequency arrays, illustrating the use of logical symbols in the study of statistical and other distributions.} Cambridge University Press.

\bibitem{TehPY}
\textsc{Teh, Y. W.} (2006). A hierarchical Bayesian language model based on Pitman-Yor processes. In \textit{Proceedings of the 21st International
Conference on Computational Linguistics and 44th Annual Meeting of the Association for Computational Linguistics,} 85--992.

\bibitem{TehG}
\textsc{Teh, Y.W., and Gorur, D.} (2009)
Indian Buffet Processes with Power-law Behavior.
NIPS 2009.


\bibitem{HDP}
\textsc{Teh, Y. W.; Jordan, M. I.; Beal, M. J.} and \textsc{ Blei, D. M.} (2006). Hierarchical Dirichlet Processes. \textit{Journal of the American Statistical Association.} \textbf{101}  1566–1581.

\bibitem{Thibaux}
\textsc{Thibaux, R., and Jordan, M. I. }(2007). Hierarchical beta processes and the Indian buffet process. In International conference on artificial intelligence and statistics (pp. 564-571).

\bibitem{Titsias}
\textsc{Titsias, M. K.} (2008). The infinite gamma-Poisson feature model. Advances in Neural Information Processing Systems. 2008.




\bibitem{Wood}
\textsc{Wood, F., Gasthaus, J., Archambeau, C., James, L. F. and Teh, Y. W.} (2011). The Sequence Memoizer. \textit{Communications of the ACM (Research Highlights)} \textbf{54,} 91--98.
 
\bibitem{ZhouCarin2015}
\textsc{Zhou, M.}, and \textsc{Carin, L.} (2015). 
Negative Binomial Process Count and Mixture Modeling,
\textit{IEEE Transactions on Pattern Analysis and Machine Intelligence}, \textbf{37}, 307–320. 

\bibitem{ZhouPGBN}
\textsc{Zhou,M., Cong,Y.}and  \textsc{Chen, B.} (2016). Augmentable gamma belief networks. T\textit{he Journal of Machine Learning Research}, \textbf{17} 5656-5699.

\bibitem{ZhouFoF}
\textsc{Zhou,M., Favaro,S.}and  \textsc{Walker, S.G.} (2017) Frequency of
Frequencies Distributions and Size-Dependent Exchangeable Random Partitions, 
\textit{J. Amer. Statist. Assoc.} \textbf{112}, 1623-1635.

\bibitem{Zhou1}
\textsc{Zhou, M., Hannah, L., Dunson, D.}, and \textsc{Carin, L.} (2012). Beta-negative binomial process and Poisson factor analysis. AISTATS, 1462-1471. PMLR, 2012.

\bibitem{ZhouPadilla2016}
\textsc{Zhou,M., Madrid-Padilla, O.H.} and  \textsc{Scott, J.G.} (2016) Priors
for Random Count Matrices Derived from a Family of Negative Binomial Processes. 
\textit{J. Amer. Statist. Assoc.} \textbf{111}, 1144-1156.

\bibitem{ZhoudHIBP}
\textsc{Zhou, M., Yang, H., Sapiro, G., Dunson, D.} and \textsc{Carin, L.} (2011). Dependent hierarchical beta process for image interpolation and denoising. In Proceedings of the Fourteenth International Conference on Artificial Intelligence and Statistics (pp. 883-891). JMLR Workshop and Conference Proceedings.

\end{thebibliography}
\end{document}